\DeclareMathOperator{\F}{\mathbb{F}}
\DeclareMathOperator{\CH}{\mathrm{CH}}
\address{Nikita Geldhauser:
Ludwig-Maximilians-Universit\"at M\"unchen, The\-re\-sien\-str. 39, 80333 M\"unchen, Germany}
\email{geldhauser@math.lmu.de}
\address{Andrei Lavrenov, Ludwig-Maximilians-Universit\"at M\"unchen, The\-re\-sien\-str. 39, 80333 M\"unchen, Germany}
\email{avlavrenov@gmail.com}
\address{Victor Petrov,  Laboratory of Modern Algebra and Applications, St. Petersburg State University, 14th Line V.O. 29b, 199178 St. Petersburg, Russia 
and 
PDMI RAS, Fontanka emb. 27, 191023 St. Petersburg, Russia}
\email{victorapetrov@googlemail.com}
\address{Pavel Sechin, Universität Regensburg, Universitätsstr. 31, 93053 Regensburg, Germany}
\email{pavel.sechin@ur.de}
\keywords{Linear algebraic groups, twisted flag varieties, oriented cohomology theories, algebraic Morava K-theory, motives.}
\subjclass[2020]{20G15, 14C15}
\begin{document}

\newcommand{\Sm}{\mathcal{S}^{\!}\mathsf{m}_k}
\newcommand{\Rings}{\mathcal{R}^{\!}\mathsf{ings}^*}
\newcommand{\rings}{\mathcal{R}^{\!}\mathsf{ings}}
\newcommand{\SmE}[1]{\mathcal{S}^{\!}\mathsf{m}_{#1}}
\newcommand{\Mot}[1]{\mathcal M^{\!}\mathsf{ot}_{#1}}
\newcommand{\Corr}[1]{\mathcal C^{\!}\mathsf{orr}_{#1}}
\newcommand{\MotF}[1]{\mathcal M_{#1}}
\newcommand{\QG}{\mathbb Q\Gamma}
\newcommand{\Inv}{\mathrm{Inv}(\QG)}
\newcommand{\Gal}{\mathrm{Gal}(k^{\mathrm{sep}}/k)}
\newcommand{\End}{\mathrm{End}}
\newcommand{\M}[1]{\mathcal{M}_{#1}}
\newcommand{\EG}{\!\,_EG}
\newcommand{\EGP}{\!\,_E(G/P)}
\newcommand{\EX}{\!\,_EX}
\newcommand{\XG}{\!\,_{\xi}G}
\newcommand{\XGP}{\!\,_{\xi}(G/P)}
\newcommand{\KQ}[1]{\mathrm K(n)^*\big(#1;\,\mathbb Q[v_n^{\pm1}]\big)}
\newcommand{\KZ}[1]{\mathrm K(n)^*\big(#1;\,\mathbb Z_{(p)}[v_n^{\pm1}]\big)}
\newcommand{\KZp}[1]{\mathrm K(n)^*\big(#1;\,\mathbb Z_p[v_n^{\pm1}]\big)}
\newcommand{\KF}[1]{\mathrm K(n)^*\big(#1;\,\mathbb F_p[v_n^{\pm1}]\big)}
\newcommand{\CHQ}[1]{\mathrm{CH}^*\big(#1;\,\mathbb Q[v_n^{\pm1}]\big)}
\newcommand{\KXZ}[1]{\!\,^{\mathrm K(n)\!}#1_{\,\mathbb Z_{(p)}[v_n^{\pm1}]}}
\newcommand{\KMotQ}{\Mot{\,\mathrm K(n)}}
\newcommand{\CHMotQv}{\Mot{\,\mathrm{CH}}}
\newcommand{\KXQ}[1]{\mathcal M_{\mathrm K(n)}(#1)}
\newcommand{\KMQ}{\mathcal M_{\,\mathrm K(n)}}
\newcommand{\CHMQv}{\MotF{\,\mathrm{CH}}}
\newcommand{\KCorrQ}{\Corr{\,\mathrm K(n)}}
\newcommand{\CHCorrQv}{\Corr{\,\mathrm{CH}}}
\newcommand{\CHCorrQ}{\Corr{\,\mathrm{CH}}}
\newcommand{\AMot}{\Mot A}

\newcommand{\e}{\varepsilon}
\newcommand{\con}{\ensuremath{\triangledown}}
\newcommand{\ra}{\ensuremath{\rightarrow}}
\newcommand{\tp}{\ensuremath{\otimes}}
\newcommand{\pr}{\ensuremath{\partial}}
\newcommand{\trigd}{\ensuremath{\triangledown}}
\newcommand{\dAB}{\ensuremath{\Omega_{A/B}}}
\newcommand{\QQ}{\ensuremath{\mathbb{Q}}\xspace}
\newcommand{\CC}{\ensuremath{\mathbb{C}}\xspace}
\newcommand{\RR}{\ensuremath{\mathbb{R}}\xspace}
\newcommand{\ZZ}{\ensuremath{\mathbb{Z}}\xspace}
\newcommand{\Zp}{\ensuremath{\mathbb{Z}_{(p)}}\xspace}
\newcommand{\Z}[1]{\ensuremath{\mathbb{Z}_{(#1)}}\xspace}
\newcommand{\NN}{\ensuremath{\mathbb{N}}\xspace}
\newcommand{\LL}{\ensuremath{\mathbb{L}}\xspace}
\newcommand{\inN}{\ensuremath{\in\mathbb{N}}\xspace}
\newcommand{\inQ}{\ensuremath{\in\mathbb{Q}}\xspace}
\newcommand{\inR}{\ensuremath{\in\mathbb{R}}\xspace}
\newcommand{\inC}{\ensuremath{\in\mathbb{C}}\xspace}
\newcommand{\OO}{\ensuremath{\mathcal{O}}\xspace}
\newcommand{\rarr}{\rightarrow}
\newcommand{\Rarr}{\Rightarrow}
\newcommand{\xrarr}[1]{\xrightarrow{#1}}
\newcommand{\larr}{\leftarrow}
\newcommand{\lrarr}{\leftrightarrows}
\newcommand{\rlarr}{\rightleftarrows}
\newcommand{\rrarr}{\rightrightarrows}
\newcommand{\al}{\alpha}
\newcommand{\bt}{\beta}
\newcommand{\ld}{\lambda}
\newcommand{\om}{\omega}
\newcommand{\Kd}[1]{\ensuremath{\Omega^{#1}}}
\newcommand{\KKd}{\ensuremath{\Omega^2}}
\newcommand{\vd}{\partial}
\newcommand{\PC}{\ensuremath{\mathbb{P}_1(\mathbb{C})}}
\newcommand{\PPC}{\ensuremath{\mathbb{P}_2(\mathbb{C})}}
\newcommand{\derz}{\ensuremath{\frac{\partial}{\partial z}}}
\newcommand{\derw}{\ensuremath{\frac{\partial}{\partial w}}}
\newcommand{\mb}[1]{\ensuremath{\mathbb{#1}}}
\newcommand{\mf}[1]{\ensuremath{\mathfrak{#1}}}
\newcommand{\mc}[1]{\ensuremath{\mathcal{#1}}}
\newcommand{\id}{\ensuremath{\mbox{id}}}
\newcommand{\dd}{\ensuremath{\delta}}
\newcommand{\bu}{\bullet}
\newcommand{\ot}{\otimes}
\newcommand{\boxt}{\boxtimes}
\newcommand{\op}{\oplus}
\newcommand{\mt}{\times}
\newcommand{\Gm}{\mathbb{G}_m}
\newcommand{\Ext}{\ensuremath{\mathrm{Ext}}}
\newcommand{\Tor}{\ensuremath{\mathrm{Tor}}}

\newcommand{\kn}[1]{\mathrm K(n)^*(#1)}
\newcommand{\Kn}[1]{\ensuremath{\mathrm K({#1})}}
\newcommand{\ckn}[1]{\mathrm{CK}(n)^*(#1)}
\newcommand{\grckn}[1]{\mathrm{gr}_\tau^{*}\,\mathrm{CK}(n)^{*}(#1)}
\newcommand{\so}{\mathrm{SO}_m}
\newcommand{\SO}[1]{\ensuremath{\mathrm{SO}_{#1}}}
\newcommand{\pt}{\mathrm{pt}}

\makeatletter
\newcommand{\colim@}[2]{
  \vtop{\m@th\ialign{##\cr
    \hfil$#1\operator@font colim$\hfil\cr
    \noalign{\nointerlineskip\kern1.5\ex@}#2\cr
    \noalign{\nointerlineskip\kern-\ex@}\cr}}
}
\newcommand{\colim}{
  \mathop{\mathpalette\colim@{\rightarrowfill@\textstyle}}\nmlimits@
}
\makeatother

\newtheorem{lm}{Lemma}[section]
\newtheorem{lm*}{Lemma}
\newtheorem*{tm*}{Theorem}
\newtheorem*{tms*}{Satz}
\newtheorem{tm}[lm]{Theorem}
\newtheorem{prop}[lm]{Proposition}
\newtheorem*{prop*}{Proposition}
\newtheorem{cl}[lm]{Corollary}
\newtheorem*{cor*}{Corollary}
\theoremstyle{remark}
\newtheorem*{rk*}{Remark}
\newtheorem*{rm*}{Remark}
\newtheorem{rk}[lm]{Remark}
\newtheorem*{xm}{Example}
\theoremstyle{definition}
\newtheorem{df}{Definition}
\newtheorem*{nt}{Notation}
\newtheorem{Def}[lm]{Definition}
\newtheorem*{Def-intro}{Definition}
\newtheorem{Rk}[lm]{Remark}
\newtheorem{Ex}[lm]{Example}

\theoremstyle{plain}
\newtheorem{Th}[lm]{Theorem}
\newtheorem*{Th*}{Theorem}
\newtheorem*{Th-intro}{Theorem}
\newtheorem{Prop}[lm]{Proposition}
\newtheorem*{Prop*}{Proposition}
\newtheorem{Cr}[lm]{Corollary}
\newtheorem{Lm}[lm]{Lemma}
\newtheorem*{Conj}{Syzygies Conjecture for Algebraic Cobordism}
\newtheorem*{BigTh}{Classification of Operations Theorem  (COT)}
\newtheorem*{BigTh-add}{Algebraic Classification of Additive Operations Theorem  (CAOT)}

\tikzcdset{
arrow style=tikz,
diagrams={>={Straight Barb[scale=0.8]}}
}

\title[Morava of orthogonal groups and motives of quadrics]{Morava K-theory of orthogonal groups and motives of projective quadrics}
\author{Nikita Geldhauser, Andrei Lavrenov,\\ Victor Petrov, Pavel Sechin}
\maketitle

\begin{abstract}
We compute the algebraic Morava K-theory ring of split special orthogonal and spin groups. In particular, we establish certain stabilization results for the Morava K-theory of special orthogonal and spin groups. Besides, we apply these results to study Morava motivic decompositions of orthogonal Grassmannians. For instance, we determine all indecomposable summands of the Morava motives of a generic quadric.
\end{abstract}

\section{Introduction}

One of the questions of algebraic geometry over fields that are not algebraically closed
is the study of projective homogeneous varieties.
An effective method is to investigate various generalized cohomology theories of these varieties, 
constructed, for example, via algebraic cycles, vector bundles or cobordism classes. 
If $G$ is an algebraic group that acts transitively on a projective homogeneous variety $X$,
then the value of the cohomology theory $A^*(G)$ of the group already 
contains substantial information about the value of the cohomology theory $A^*(X)$. 

In this article we examine $A^*=\Kn{n}^*$, the algebraic Morava K-theory for prime 2, and $G=\so$, the split special orthogonal group, i.e., the corresponding projective homogeneous varieties are quadrics and (higher) orthogonal Grassmannians. The computation of the ring $\Kn{n}^*(\so)$ (Theorem \ref{thm:intro_so}) is one of the main results of this article. Among other things this provides new tools for the study of Morava motives, and as an instance of this principle, we determine all indecomposable summands of the $\Kn{n}$-motive of a generic quadric $Q$ (Theorem~\ref{thm:intro_quad}).

\subsection{Overview of Morava K-theory}
Levine and Morel defined a universal oriented cohomology theory, called the algebraic cobordism (see \cite{LM}). 
It allows one to consider algebraic analogues of well studied topological oriented cohomology theories such as Morava K-theories over fields of characteristic $0$.

The algebraic cobordism theory of Levine--Morel and arbitrary oriented cohomology theories~\cite{LP,Lev,Vcob,PS,VY,NZ,CPZ,V12,Se1,GV,PShopf,Mer,Sm}
are extensively studied now.
 Any oriented cohomology theory is endowed with a formal group law, and any formal group law over any commutative ring comes from a certain oriented cohomology theory in this sense. Among various theories corresponding to the same formal group law there exists a universal one called {\it free}. The class of free theories contains algebraic cobordism, Chow groups, i.e., algebraic cycles modulo rational equivalence, Grothendieck's $\mathrm K^0$, and Morava K-theories. One of the major features of these theories for our purposes is the Rost nilpotence principle for projective homogeneous varieties recently proven in~\cite{GV}.

Vishik gave a geometric description of free theories in~\cite{V12}, which allowed him to construct operations on algebraic cobordism, and later the fourth author used Vishik's description to construct operations from Morava K-theories, see~\cite{Se1,Se2}. Vishik's results imply, in particular, that the category of free theories and multiplicative operations is equivalent to the category of $1$-dimensional commutative graded formal group laws~\cite[Theorem~6.9]{V12}. 

Working with quadratic forms, it is natural to consider localized at $2$ coefficients $\mathbb Z_{(2)}$ instead of integral, and, therefore, consider only formal group laws over $\mathbb Z_{(2)}$-algebras. Then by a theorem of Cartier we can restrict ourselves to a narrower class of formal group laws, called {\it $2$-typical} (every formal group law over a $\mathbb Z_{(2)}$-algebra is isomorphic to a $2$-typical one), in particular, it is natural to consider the {\it universal $2$-typical} formal group law. It admits a standard construction as a formal group law over the polynomial ring with infinite number of variables $\mathbb Z_{(2)}[v_1,v_2,\ldots]$ defined by recurrent identities, see, e.g.,~\cite[Appendix~A2]{Rav}. Then passing to $\mathbb F_2$, specifying $v_k=0$ for $k\neq n$, and inverting $v_n$, we obtain a formal group law over $\mathbb F_{2}[v_n^{\pm1}]$, and the corresponding free theory is called the Morava K-theory $\mathrm K(n)^*$.

It is worth remarking that Morava K-theories are related to higher powers of the fundamental ideal $I$ in the Witt ring and, more generally, to cohomological invariants of algebraic groups. The first and the fourth authors stated in~\cite{SeSe} a ``Guiding Principle'', which dates back to Voevodsky's program~\cite{VoevMor} on the proof of the Bloch--Kato conjecture, and specifies this relation. In particular, the $n$-th Morava motive of a quadric
depends only on its dimension and on the class of the quadratic form in the Witt ring modulo $I^{n+2}$~\cite[Proof of Proposition~6.18]{SeSe}.

Recently, it was shown by
the first and the fourth authors \cite{SeSe} that motives for the {\it Morava K-theory} allow to obtain new results on Chow groups of quadrics. Moreover, they most naturally fill in the gap between $\mathrm K^0$-motives
and Chow motives. In this article this work is continued and is brought to the next level.

\subsection{Overview of motives}\label{sec1.1}

The theory of motives envisioned by Grothendieck is supposed to bridge the study of cohomology of algebraic varieties with rational 
coefficients with the study of various geometric and arithmetic properties of the variety 
itself that are expressed via algebraic cycles.
 It turned out, however, that this theory is much more profound and fundamental 
 and also accounts for phenomena of cohomology with integral coefficients.
 For example, the Chow and $\mathrm K^0$-motives with integral and finite coefficients are nowadays a common tool 
for the study of projective homogeneous varieties over non-algebraically closed fields.

One of the most striking applications of motives was Voevodsky's proof~\cite{Voev,OVV} of the Milnor conjecture which relies on Rost's computation~\cite{Rost} of
the motive of a Pfister quadric. More generally, the structure of Chow motives of norm varieties plays a crucial role in the proof of the Bloch--Kato conjecture by Rost and Voevodsky (see~\cite{Vo11}).

Chow motives of projective quadrics and, more generally, of projective homogeneous varieties were studied by Brosnan, Chernousov, Gille, Karpenko, Merkurjev, Vishik, Zainoulline and many others,
and there exist plenty various applications of Chow motives to quadratic forms, and, more generally, to algebraic groups, including~\cite{Kholes,Kfirst,KRost,YSteen,KZh,VJinv,Vrost,PSjinv,CGM,Bdec,KarPS,PS10,Vuinv,Vexcel}.

Motivic decompositions of projective homogeneous varieties were used to construct a new cohomological invariant for groups of type $\mathrm{E}_8$ and
to solve a problem of Serre about their finite subgroups (see \cite{GS10} and \cite{S16}). Besides, Garibaldi, Geldhauser, and Petrov used decompositions of Chow motives to relate the rationality of some parabolic subgroups of groups of type $\mathrm{E}_7$ with the Rost invariant, proving a conjecture of Rost and solving a question of Springer in \cite{GPSshells}.

A new tool for the study of motives of homogeneous varieties was not long ago developed by the first and by the third author \cite{PShopf}.
Let $G$ be a split semisimple algebraic group over a field and let $A^*$ be an oriented cohomology theory in the sense of Levine--Morel.
Then there exists a functor from the category of $A^*$-motives of twisted forms of smooth projective cellular $G$-varieties
to the category of graded comodules over a bi-algebra obtained as the quotient of the Hopf algebra $A^*(G)$ modulo certain bi-ideal.

In the case of twisted flag varieties this quotient is called the {\it $J$-invariant}. Note that in the case of a generic twisted flag variety the $J$-invariant is just the Hopf algebra $A^*(G)$ itself. One of the main properties of the $J$-invariant is that it carries full information 
on the motivic decomposition type of {\it generically split} twisted flag varieties (see \cite[Theorem~5.7]{PShopf}).

We remark finally that the $J$-invariant was introduced previously in \cite{PSjinv} and \cite{VJinv} for the Chow theory, and it was an essential ingredient in the solution of problems mentioned above and, in particular, in the solution of a problem of Serre about groups of type $\mathrm E_8$ and its finite subgroups.

\subsection{Overview of results}

\subsubsection{Morava K-theory of orthogonal groups}

 The main result of the present article is a computation of the ring of the Morava K-theory for the prime 2 
 of split special orthogonal groups. 

The investigation of the structure of oriented cohomology theories of split algebraic groups has a long history (see, for example, \cite{Le93} and \cite{Me97} for $\mathrm K^0$ or \cite[Table~2]{Kac} for the Chow theory). For example, the structure of the Chow ring  $\mathrm{CH}^*\big(\mathrm{SO}_m)$
is explicitly computed by Marlin \cite{Ma74}. One can write a closed formula for $\CH^*(\so)/2$ as
\begin{equation}\label{eq:ring}
     \mathbb F_2 [e_1, e_3,\ldots, e_{2r-1}]\Big/\Big(e_{2i-1}^{2^{k_i}},\, i=1\ldots r\Big), \quad \deg e_{2i-1} = 2i-1
\end{equation}
with explicit expressions for $k_i$ and $r$ depending on $m$ (see \cite{Kac}).

The following theorem extends the above result to the case of the Morava K-theory.

\begin{tm}[Theorem~\ref{answer}]\label{thm:intro_so}

As an $\mathbb{F}_2[v_n^{\pm 1}]$-algebra the Morava K-theory of split orthogonal groups can be described explicitly as follows:

$$
\mathrm K(n)^*(\mathrm{SO}_{m})\cong\mathbb F_2[v_n^{\pm1}][e_1,e_3,\ldots,e_{2r'-1}]\Big/\Big(e_{2i-1}^{2^{k'_i}},\, i=1\ldots r'\Big),
$$
where $r'=\mathrm{min}\left(2^{n-1},\,\left\lfloor\frac{m+1}{4}\right\rfloor\right)$ and $$k'_i=\mathrm{min}\left(\left\lfloor\mathrm{log}_2\left(\frac{2^{n+1}-1}{2i-1}\right)\right\rfloor,\,\left\lfloor\mathrm{log}_2\left(\frac{m-1}{2i-1}\right)\right\rfloor\right).$$
\end{tm}

We remark that the {\it topological} Morava K-theory of 
the connected component of the real compact Lie group of orthogonal matrices $\mathrm{SO}(m)$ is known only additively~\cite{Nis,Rao}.

The proof of the above theorem is very different from~\cite{Ma74,Kac}, although it relies on~(\ref{eq:ring}). It is obtained by combining the following three claims
that might be of independent interest.

\begin{enumerate}[(i)]
    \item {\it Stabilization.}
   
   \noindent 
            The canonical pullback map along the natural inclusion $$\mathrm K(n)^*\big(\mathrm{SO}_m \big)\rightarrow\mathrm K(n)^*\big(\mathrm{SO}_{m-2}\big)$$ is an isomorphism for $m\ge 2^{n+1}+1$.
\end{enumerate}
We remark that for the topological Morava K-theory the natural map $\mathrm{K}(n)^*_{\mathrm{top}}(\mathrm{SO}(m))\rightarrow\mathrm{K}(n)^*_{\mathrm{top}}(\mathrm{SO}(m-2))$ is {\it not} an isomorphism~\cite[Corollary~2.9]{Nis}.

Recall that for an arbitrary smooth variety $X$ there exists a canonical surjective ring homomorphism $$\rho\colon\CH^*(X)\ot \mathbb F_2[v_n,v_n^{-1}]\rarr \mathrm{gr}^*_\tau \kn{X}$$
where $\tau$ denotes the topological filtration on the Morava K-theory. In general, $\rho$ is an isomorphism only up to degree $2^n$~\cite{Se1}. However, for $X=\mathrm{SO}_m$ we can prove the following result.

\begin{enumerate}[(ii)]
    \item {\it Unstable coincidence of the associated graded ring  with Chow.}
         
         \noindent
         If $m\le 2^{n+1}$, then the canonical ring homomorphism
         $$\CH^*(\so)\ot \mathbb F_2[v_n,v_n^{-1}]\rarr \mathrm{gr}^*_\tau \kn{\so}$$
         is an isomorphism.
\end{enumerate}

Finally, we show that the Hopf algebra structure on $\kn{\so}$ allows to reconstruct
the algebra structure of $\kn{\so}$ from that of $\mathrm{gr}^*_\tau \kn{\so}$. Here we use~(\ref{eq:ring}) and the explicit formulae for $k_i$.

\begin{enumerate}[(iii)]
    \item {\it Reconstruction of the ring structure.} 
    
  \noindent
For $m\le 2^{n+1}$  the ring $\kn{\so}$ is non-canonically isomorphic to the ring ${\mathrm{CH}^*(\so)\ot \mathbb F_2[v_n^{\pm 1}]}$. 
\end{enumerate}

The three claims described above are proved by different methods. The proof of the stabilization result relies on Demazure differential operators (see \cite{BGG,Dem,CPZ}),
the unstable coincidence is based on the structure of algebraic cobordism studied by Vishik \cite{Vcob}
and the last statement as already mentioned requires a subtle study of Hopf algebras.

The following table summarizes the results of the computation for a few small numbers $n$ and $m$
(we set $v_n=1$ to simplify the formulae; grey cells indicate the stable range):
\begin{center}
\begin{tabular}{ l  | c  c  c  c }
     & \SO{3},\,\SO{4}  &  \SO{5},\,\SO{6} & \SO{7},\,\SO{8} & \SO{9},\,\SO{10} \\
     \hline 
     \hline 

\Kn{1}  & \cellcolor[gray]{0.9} $\mathbb F_2 [e_1]/e_1^2$ & \cellcolor[gray]{0.9} $\mathbb F_2 [e_1]/e_1^2$ & \cellcolor[gray]{0.9} $\mathbb F_2 [e_1]/e_1^2$ & 
\cellcolor[gray]{0.9} $\mathbb F_2 [e_1]/e_1^2$ \\  
        \hline 
\Kn{2}   & $\mathbb F_2 [e_1]/e_1^2$ & $\mathbb F_2 [e_1]/e_1^4$ & \cellcolor[gray]{0.9} $\mathbb F_2 [e_1, e_3]/(e_1^4, e_3^2)$ & 
\cellcolor[gray]{0.9} $\mathbb F_2 [e_1, e_3]/(e_1^4, e_3^2)$ \\
     \hline 
\Kn{3}   & $\mathbb F_2 [e_1]/e_1^2$ & $\mathbb F_2 [e_1]/e_1^4$ & $\mathbb F_2 [e_1, e_3]/(e_1^4, e_3^2)$ & 
$\mathbb F_2 [e_1, e_3]/(e_1^8, e_3^2)$ \\   
    \hline 
    \hline 
$\mathrm{CH/2}$  & $\mathbb F_2 [e_1]/e_1^2$ & $\mathbb F_2 [e_1]/e_1^4$ & $\mathbb F_2 [e_1, e_3]/(e_1^4, e_3^2)$ & 
$\mathbb F_2 [e_1, e_3]/(e_1^8, e_3^2)$ \\ 
\hline 
\end{tabular}
\end{center}

\medskip

Observe that the cell ``$\mathrm K(n)^*(\mathrm{SO}_{2^{n+1}-1})$'' is the first stable cell both in the corresponding row and the column. Although stabilization ``in columns'' is a general phenomenon, for an arbitrary $X$ it happens only for $\mathrm K(n)^*(X)$ with $2^n>\mathrm{dim}\,X$. We underline again that the stabilization ``in rows'' is a special phenomenon for $\mathrm K(n)^*(\mathrm{SO}_m)$, it does not hold for $\mathrm K(n)^*_{\mathrm{top}}(\mathrm{SO}({m}))$.
This miraculous complementarity of two stabilizations is what allowed us to prove the main theorem.

\subsubsection{Applications to motives}

In view of the discussion in Section~\ref{sec1.1}, the above theorem establishes the ring structure of the $J$-invariant of orthogonal groups, and, consequently, automatically gives new information about the structure of the Morava motives of maximal orthogonal Grassmannians (see Corollary~\ref{cormax}).

For example, as an immediate corollary of the above theorem and \cite[Theorem~5.7]{PShopf}
we obtain that for $m>2^{n+1}$ the $\mathrm K(n)$-motive of a connected component of the maximal orthogonal Grassmannian for a generic $m$-dimensional quadratic form with trivial discriminant is always non-trivially decomposable (see Corollary~\ref{cormax}). Note that contrary to this by a result of Vishik its Chow motive is always indecomposable.

We also provide a complete computation of decompositions of the Morava motives of generic quadrics. 
Note that one can abstractly specialize this decomposition to an arbitrary smooth projective quadric, and we explicitly write the corresponding projectors.

The results of Levine--Morel~\cite{LM} and Vishik--Yagita~\cite{VY} imply that the decomposition of the Chow motive is the ``roughest'' one in the sense that any decomposition of the Chow motive 
of a smooth projective variety $X$ can be lifted to a decomposition of the $A^*$-motive of $X$ for every oriented cohomology theory $A^*$.

However, for an indecomposable Chow summand the corresponding $A^*$-motive can be decomposable. For example, the Chow motive of a generic quadric is indecomposable (see~\cite{Vlect,Ksuff}), on the other hand, by results of Swan and Panin \cite{Swan,Pkt} the $\mathrm K^0$-motive of every positive dimensional  quadric is decomposable and encodes, for example, its Clifford invariant.

The application of our computations to the motivic study of quadrics is the following.

\begin{tm}[Theorem~\ref{result}]\label{thm:intro_quad}
Let $Q$ be a generic quadric of dimension $D$ and $n>1$. We denote $N=2^n$ for $D=2d$ even or $N=2^n-1$ for $D=2d+1$ odd. 
\begin{enumerate}[{\rm (i)}]
    \item If $D<2^n-1$, then the $\mathrm K(n)$-motive of $Q$ is indecomposable. 
    \item If $D\geq 2^n-1$, then the $\mathrm K(n)$-motive of $Q$ decomposes as a direct sum of $2d+2-N$ Tate motives $\mathcal M(\mathrm{pt})\{i\}$ and an indecomposable summand of rank~$N$. 
    \end{enumerate}
 Moreover, if $D$ is odd, then over an algebraic closure this indecomposable summand splits as a sum of different twists of the motive of the point $\bigoplus_{i=0}^{2^n-2}\mathcal M(\mathrm{pt})\{i\}$, and if $D$ is even, then one of these twists appears twice.
\end{tm}

Note that the last part of this theorem is important due to the periodicity of Tate motives for the $n$-th Morava K-theory with period $2^n-1$. Namely, one has $\mathcal M(\mathrm{pt})\cong\mathcal M(\mathrm{pt})\{2^n-1\}$.

The above theorem is a nice illustration of a general principle that the Morava K-theories provide a sequence of deformations from $\mathrm K^0$ to the Chow theory. For example, in the odd-dimensional case the rank of the indecomposable summand in the statement of the above theorem is (when $n$ is growing) $3,7,15,\ldots$ and eventually stabilizes at $2^{\lfloor\log_2(D+1)\rfloor}-1, D+1, D+1, \ldots$ when the motive of the whole quadric becomes indecomposable as it should also be according to results of Karpenko and Vishik for Chow motives.

The proof of the above theorem uses techniques of \cite{PShopf} and, in particular, relies on the explicit computations of the $A^*(\mathrm{SO}_m)$-comodule structure of $A^*(\overline Q)$ where $m=\mathrm{dim}\,\overline Q+2$, $A^*=\mathrm K(n)^*$, and $\overline Q$ is a split quadric.

\subsection{Acknowledgement}
We would like to thank Alexey Ananyevskiy for his useful suggestions and the anonymous referee for a careful reading of the preliminary version of this article and for his valuable comments.

The first-named author was supported by the DFG project AN 1545 ``Equivariant and weak orientations in the motivic homotopy theory''. The first-named and the second-named authors of the article were supported by the SPP 1786 ``Homotopy theory and algebraic geometry'' (DFG). The third-named author was supported by the BASIS foundation grant ``Young Russia Mathematics''. A part of this work was written when the second-named author was in St. Petersburg University supported by the BASIS foundation grants ``Young Russia Mathematics'' and the Ministry of Science and Higher Education of the Russian Federation, agreement No. 075–15–2022–287.

The results of Section 4--5, 7 constitute the major part of the second author's Ph. D. Thesis~\cite[Chapter~I,\,II]{LavPhD}, however, the proofs in this paper are simplified and the exposition is clarified.

\section{Background on oriented cohomology theories}

\subsection{Oriented cohomology theories}
\label{non-grad}

In the present article we always work over a field $k$ of characteristic $0$ and  $\Sm$ denotes the category of smooth quasi-projective varieties over $k$. We usually denote $\mathrm{Spec}\,k$ by $\mathrm{pt}$. 

Consider an oriented cohomology theory $A^*$ in the sense of Levine--Morel~\cite{LM} over $k$. Each oriented cohomology theory is equipped with a (commutative and one-dimensional) formal group law $F_A$, e.g., for $A^*=\mathrm{CH}^*$ the corresponding formal group law is additive, and for Grothendieck's  $\mathrm K^0[\beta^{\pm1}]$ the corresponding formal group law $F_{\mathrm{K}^0}(x,y)=x+y-\beta xy$ is multiplicative.

Levine and Morel constructed a theory of {\it algebraic cobordism} $\Omega^*$, which is a universal oriented cohomology theory. They also proved that $\Omega^*(\mathrm{pt})$ is isomorphic to the Lazard ring~$\mathbb L$, and the formal group law $F_{\Omega}$ is the universal formal group law~\cite[Theorem~1.2.7]{LM}. As a consequence, for any commutative $\mathbb Z$-graded ring $R$, and any formal group law $F(x,y)$ homogeneous of degree $1$ as an element of $R[[x,y]]$, there exists an oriented cohomology theory $A^*$, more precisely,
$$
A^*=\Omega^*\otimes_{\mathbb L}R
$$
such that $A^*(\mathrm{pt})=R$, and the corresponding formal group law $F_A(x,y)$ is equal to $F(x,y)$. Such a theory $A^*$ is called {\it free}. 

Free theories keep many properties of the algebraic cobordism, for instance, they are {\it generically constant} in the sense of~\cite[Definition~4.4.1]{LM}, see~\cite[Corollary~1.2.11]{LM}, and satisfy the localization property~\cite[Definition~4.4.6, Theorem~1.2.8]{LM}. Free theory $A^*$ also satisfies the following identity, which we call the {\it Normalization identity} following~\cite[Theorem~1.1.8]{Prr}:
\begin{align}
\label{divisor}
\iota_A\big(1_{A^*(D)}\big)=c_1^A\big(\mathcal L(D)\big)
\end{align}
for any smooth divisor $\iota\colon D\hookrightarrow X$, and $\mathcal L(D)$ the corresponding line bundle as in~\cite[Chapter~II, Proposition~6.13]{Har}, see~\cite[Proposition~5.1.11]{LM}. Actually, we will use it only for $D$ a hypersurface in $\mathbb P^n$ with $\mathcal L(D)=\mathcal O(d)$, and $A^*=\Omega^*$ the algebraic cobordism theory.

The examples of free theories are Chow, $\mathrm K^0[\beta^{\pm1}]$, and Morava K-theories~\cite{Log,Se1,Se2,SeSe}.

\subsection{Topological filtration}
\label{filtration}

Assume that an oriented cohomology theory $A^*$ satisfies the localization property~\cite[Definition~4.4.6]{LM}, i.e., 
$$ 
A^*(Z) \xrarr{i_*} A^*(X) \xrarr{j^*} A^*(U)\xrarr{\ } 0
$$
is exact for a closed subvariety 
$i:Z\rarr X$ of a smooth variety $X$ with the open complement $j:U\rarr X$ (see~\cite[Remark~2.1.4]{LM} for the definition of $A^*(Z)$ for $Z$ not smooth). Then the two possible definitions of the topological filtration coincide:
$$
\tau^s A^*(X):=\!\!\!\!\!\bigcup_{\substack{U\text{ open}\\\mathrm{codim}_X X\setminus U \ge s}} \!\!\!\!\!\mathrm{Ker\ } \big(A^*(X)\rarr A^*(U)\big) = \!\!\!\!\bigcup_{\substack{Z\text{ closed}\\\mathrm{codim}_X Z\ge s}} \!\!\!\!\mathrm{Im\ } \big(A^*(Z)\rarr A^*(X)\big).
$$
In fact, we will only consider free theories $A$ in the present article.

We denote $\mathrm{gr}_\tau^*A^*(X)$ the associated graded ring of $A^*(X)$, and we consider the canonical map
$$
\rho\colon A^*(\mathrm{pt})\otimes_{\mathbb Z}\mathrm{CH}^*(X)\twoheadrightarrow\mathrm{gr}_\tau^*A^*(X)
$$
obtained by the universality of the Chow theory in the class of theories with the additive formal group law~\cite[Theorem~4.5.1]{LM}. The map $\rho$ sends the class $a\otimes[Z]$ for $a\in A^{-s}(\pt)$, $i\colon Z\hookrightarrow X$ a closed integral subscheme of codimension $p$, to the class of $\,a^{\,}\cdot^{\,}(i\circ f)_A\big(\widetilde Z\big)$ in $\mathrm{gr}_\tau^pA^{p-s}(X)$, where $f\colon\widetilde Z\rightarrow Z$ is a projective birational morphism with $\widetilde Z$ smooth, cf.~\cite[Lemma~4.5.3]{LM} and~\cite[Corollary~4.5.8]{LM}. The map $\rho$ is surjective for any free theory $A^*$ (in fact, for much more general $A^*$, see~\cite[Theorem~4.4.7]{LM}).

If $A^*$ is a free theory such that $A^{>0}(\pt)=0$, then the topological filtration on $A^*(X)$ is defined purely in terms of the structure of an $A^*(\pt)$-graded module. Namely, 
\begin{align}
\label{topfilt}
\tau^s A^n(X) = \sum_{-m\le n-s} A^{-m}(\pt)\cdot A^{n+m}(X)
\end{align}
(see~\cite[Theorem~4.5.7]{LM} and \cite[Section~4]{Vcob} for the case of algebraic cobordism). In particular, $\tau^sA^*(X)$ coincides with the ideal of $A^*(X)$ generated by all homogeneous elements of codimension at least~$s$.

\begin{rk*}
Observe that our choice of gradings on the associated graded ring coincides with the one in~\cite[Section~4]{Vcob}, but Levine and Morel in~\cite[Subsection~4.5.2]{LM} denote by $\mathrm{Gr}_sA^{p-s}(X)$ what we denote by $\mathrm{gr}_\tau^pA^{p-s}(X)$.
\end{rk*}

\subsection{Morava K-theory}
In the article we use notions from the theory of formal group laws (see \cite{Rav,Haz}).
For a prime $p$ consider the universal $p$-typical formal group law $F_{\mathrm{BP}}$ defined over the ring $V\cong\mathbb Z_{(p)}[v_1,\,v_2,\ldots]$, see~\cite[Theorem~A2.1.25]{Rav} (here $\mathbb Z_{(p)}$ denotes the localization of $\mathbb Z$ at the ideal $(p)=p\mathbb Z$). If ${\mathrm{log}_{\mathrm{BP}}(t)=\sum_{i\geq0}l_it^{p^i}}$ is the logarithm of $F_{\mathrm{BP}}$ over $V\otimes\mathbb Q$, then the mentioned isomorphism ${V\cong\mathbb Z_{(p)}[v_1,\,v_2,\ldots]}$ can be chosen in such a way that $p\,^{\!}l_k=\sum_{i=0}^{k-1}l_iv_{k-i}^{p^i}$, see~\cite[Theorem~A2.2.3]{Rav} (in this situation $v_k$ are often called {\it Hazewinkel's generators}). Then choosing any $n$, and sending $v_k$ to $0$ for $k\neq n$ we obtain a formal group law with the logarithm
\begin{align}
\label{loggr}
l(t)=\sum_{k\geq0}\,p^{-k}\,v_n^{\frac{p^{nk}-1}{p^n-1}}\,t^{p^{nk}}\in\mathbb Q[v_n][[t]]
\end{align}
where $v_n$ has degree $1-p^n$. We call the corresponding free theory
$$
\mathrm K(n)^*\big(-;\,\mathbb Z_{(p)}[v_n^{\pm1}]\big)=\Omega^*\otimes_{\mathbb L}\mathbb Z_{(p)}[v_n^{\pm1}]
$$ 
the ($n$-th) Morava K-theory. 

The term ``Morava K-theory'' can denote a family of free theories, as in~\cite{Se1,Se2,SeSe}. In the present work we prefer to use it, in contrast, only for the theory chosen above as in~\cite{Log,PShopf}.

It is sometimes also convenient to consider the corresponding {\it connective} version of the Morava K-theory defined as a free theory
$$
\mathrm{CK}(n)^*\big(-;\,\mathbb Z_{(p)}[v_n]\big)=\Omega^*\otimes_{\mathbb L}\mathbb Z_{(p)}[v_n]
$$ 
with the same formal group law and another coefficient ring, or a version with coefficients in a $\mathbb Z_{(p)}$-algebra $R$, 
$$
\mathrm{K}(n)^*\big(-;\,R[v_n^{\pm1}]\big)=\Omega^*\otimes_{\mathbb L}R[v_n^{\pm1}]=\mathrm{K}(n)^*\big(-;\,\mathbb Z_{(p)}[v_n^{\pm1}]\big)\otimes R,
$$
e.g., for $R=\mathbb Z_p$, $\mathbb F_p$ or $\mathbb Q$. In the present article we work only with $\mathrm{K}(n)^*\big(-;\,\mathbb F_2[v_n^{\pm1}]\big)$, and, therefore, use the notation $\mathrm K(n)^*(-)$ for it for shortness. We remark that in the present article we usually consider Morava K-theo\-ries for the prime $p=2$ because we are mainly interested in quadrics.

We remark finally that the algebraic Morava K-theory as conjectured by Voevodsky in~\cite{VoevMor} or as constructed in~\cite{LT} is a {\it bi}-graded ``big'' theory, and in the present article we only consider oriented cohomology theories in the sense of~\cite{LM}, called sometimes ``small''. Our (small) Morava K-theory is the $(2*,*)$-diagonal of the ``big'' theory of~\cite{LT}, as shown in~\cite{Ldiag}.

\subsection{Cellular varieties}

For an oriented cohomology theory $A^*$ we consider the category of (effective) $A$-motives defined as in~\cite{Manin}. In particular, the morphisms between two smooth projective irreducible varieties in this category are given by $A^{\dim Y}(X\times Y)$. The motive of a smooth projective variety $X$ is denoted by $\mathcal M_A(X)=\mathcal M(X)$, and the Tate twist of $\mathcal M$ is denoted by $\mathcal M\{n\}$. We call twisted motives of the point the {\it Tate motives}, and we say that the motive $\mathcal M(X)$ of a smooth projective variety $X$ is {\it split}, if it is isomorphic to a direct sum of Tate motives.

We call a smooth projective variety $X\!$ {\it cellular} if there exists a filtration 
\begin{align}
\label{cellular}
X=X_0\supseteq X_1\supseteq\ldots\supseteq X_{n+1}=\emptyset
\end{align}
of $X$ by closed subschemes such that $X_i\setminus X_{i+1}$ is a disjoint union of affine spaces for each $i$. By~\cite[Corollary~2.9]{VY} (and by the universality of the algebraic cobordism) the $A$-motive $\mathcal M_A(X)$ of a cellular variety $X$ is split.

For $X$ cellular and for any smooth projective $Z$, we can obtain from~(\ref{cellular}) a similar filtration for $X\times Z$, and deduce by the same kind of arguments that the motive of $X\times Z$ decomposes to a sum of twisted motives of $Z$, cf.~\cite{NZ}. This implies, in particular, that the map $$A^*(X)\otimes_{A^*(\mathrm{pt})}A^*(Z)\rightarrow A^{*}(X\times Z)$$ given by $x\otimes z\mapsto x\times z$ is an isomorphism. We refer to this fact as the {\it K\"unneth formula}.

Natural examples of cellular varieties are {\it split} pro\-jec\-tive homogeneous varieties.

\subsection{The Rost nilpotence principle}
\label{rostnilp}

Let $L/k$ be a field extension, and consider the {\it extension of scalars} map $\mathrm{res}^A_{L/k}$ for a free theory $A^*$, see~\cite[Example~1.2.10]{LM} and~\cite[Example~2.7]{GV}. The map $\mathrm{res}^A_{L/k}$ can be extended to the category of $A$-motives, and for $\mathcal M\in\AMot$ we write $\mathcal M_L$ for $\mathrm{res}^A_{L/k}\,\mathcal M$. We say that the {\it Rost nilpotence principle} holds for $\mathcal M$ if for every field extension $L/k$ the kernel of the map
\begin{align*}
\mathrm{End}_{\Mot A}(\mathcal M)\rightarrow\mathrm{End}_{\Mot A}(\mathcal M_L).
\end{align*}
consists of nilpotent elements.

The Rost nilpotence principle is proven for the Chow motives of projective quadrics by Rost in~\cite{Rost}. Alternative proofs of this result are given in~\cite{Vrost} and~\cite{Brost}. The Rost nilpotence principle for Chow motives of projective homogeneous varieties is proven in~\cite{CGM} and~\cite{Bdec}, for generically split Chow motives of smooth projective varieties in~\cite{VZrost}, and for Chow motives of surfaces in~\cite{Grost,Grost2}. By~\cite[Corollary~2.8]{VY} the Rost nilpotence principle holds in these cases for algebraic cobordism or connective K-theory motives as well.

A recent result of Gille--Vishik~\cite[Corollary~4.5]{GV} asserts that the Rost nilpotence principle holds for the motive $\mathcal M(X)$ of every projective homo\-geneous variety  $X$ for a semisimple algebraic group (e.g., for the motive $\mathcal M(Q)$ of any smooth projective quadric $Q$) and for every free theory $A^*$.

\subsection{The Krull--Schmidt Theorem}
\label{Krull-Schmidt}

We say that the Krull--Schmidt Theorem holds for a motive $\mathcal M$ if for every two decompositions of $\mathcal M$ into finite direct sums of {\it indecomposable} motives 
$$
\mathcal M\cong\bigoplus_{i=1}^m\mathcal N_i\cong\bigoplus_{j=1}^{m'}\mathcal N'_j
$$
one has $m=m'$ and $\mathcal N'_j\cong\mathcal N_{\sigma(i)}$ for some permutation $\sigma\in S_m$.

Recall that a (non-commutative) ring $S$ is {\it local} if for all $a,b\in S$ with $a+b=1$ it follows that at least one of $a$, $b$ is invertible. The following proposition is a useful tool to prove the Krull--Schmidt Theorem~\cite[Chapter~I, Theorem~3.6]{Bass}.

\begin{prop}
\label{bass}
Assume that an additive category $\mathcal C$ is idempotent complete. Let $A_i$, $B_j$ be objects of $\mathcal C$ with local endomorphism rings and
$$
A_1\oplus\ldots\oplus A_m=B_1\oplus\ldots\oplus B_{m'}.
$$
Then $m=m'$ and $A_i\cong B_{\sigma(i)}$ for some $\sigma\in S_m$.
\end{prop}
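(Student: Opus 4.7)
The plan is the classical Krull--Schmidt argument via local endomorphism rings, proceeding by induction on $m$. The goal of the inductive step is to exhibit, after reindexing, an isomorphism $A_1 \cong B_1$ that then allows the summands $A_1$ and $B_1$ to be cancelled from both sides of the given decomposition.

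I would write the given isomorphism $\phi \colon A_1 \oplus \cdots \oplus A_m \xrightarrow{\sim} B_1 \oplus \cdots \oplus B_{m'}$ in matrix form $\phi = (f_{ji})$ with $f_{ji} \colon A_i \to B_j$, and its inverse as $\phi^{-1} = (g_{ij})$. Projecting the identity $\phi^{-1}\phi = \mathrm{id}$ onto the $A_1$-summand yields
\[
\sum_{j=1}^{m'} g_{1j}\, f_{j1} = \mathrm{id}_{A_1}.
\]
Since $\mathrm{End}(A_1)$ is local, its non-units form a proper two-sided ideal, so a finite sum of non-units is again a non-unit; consequently some $g_{1 j_0} f_{j_0 1}$ must be a unit in $\mathrm{End}(A_1)$. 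After reindexing the $B_j$ we may take $j_0 = 1$ and set $u := g_{11} f_{11} \in \mathrm{Aut}(A_1)$.

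The core observation is that $e := f_{11} u^{-1} g_{11}$ is an idempotent in $\mathrm{End}(B_1)$ (a one-line check using $g_{11} f_{11} = u$), and is nonzero because $g_{11} \circ e \circ f_{11} = u$ is invertible. Locality of $\mathrm{End}(B_1)$ forces its only idempotents to be $0$ and $\mathrm{id}_{B_1}$, so $e = \mathrm{id}_{B_1}$; together with $g_{11} f_{11} = u$ this identifies $f_{11}$ as an isomorphism $A_1 \xrightarrow{\sim} B_1$ with inverse $u^{-1} g_{11}$. With $f_{11}$ invertible, standard elementary row and column operations on $\phi$ (i.e.\ conjugation by unitriangular automorphisms of the two direct sums) block-diagonalise $\phi$ into $f_{11} \oplus \phi'$, and the remaining block $\phi' \colon A_2 \oplus \cdots \oplus A_m \xrightarrow{\sim} B_2 \oplus \cdots \oplus B_{m'}$ inherits invertibility. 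The inductive hypothesis applied to $\phi'$ completes the step; the base case $m = 1$ reduces to the same idempotent dichotomy applied to the orthogonal idempotents of $\mathrm{End}(A_1)$ cut out by the $B_j$-components.

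The argument is well-known and largely formal; there is no serious obstacle beyond keeping track of indices. Essentially everything is driven by two properties of local (non-commutative) rings: a sum of non-units is a non-unit, and every idempotent is $0$ or $1$. Idempotent completeness of $\mathcal{C}$ is not strictly used in the above bookkeeping, but it is the natural hypothesis under which locality of endomorphism rings characterises the indecomposable objects to which the proposition is applied.
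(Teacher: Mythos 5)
Your argument is correct and is exactly the classical Krull--Schmidt induction: locate an invertible $(1,1)$-matrix entry using that a finite sum of non-units in a local ring stays a non-unit, upgrade it to an isomorphism $A_1\cong B_1$ via the idempotent dichotomy in $\mathrm{End}(B_1)$, block-diagonalise by unipotent row/column automorphisms, and recurse. The paper does not prove this proposition but simply cites Bass, \emph{Algebraic K-Theory}, Chapter I, Theorem 3.6, whose proof is precisely this argument; your observation that idempotent completeness is not actually consumed by the induction (it is only the natural ambient hypothesis ensuring indecomposables have local endomorphism rings) is also accurate.
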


In particular, if any indecomposable direct summand of a motive $\mathcal M$ has a local endo\-morphism ring, we conclude that the Krull--Schmidt Theorem holds for $\mathcal M$. This is the case, e.g., when the theory $A^*$ is free and has the property that $A^*(\mathrm{pt})$ is a $K$-algebra over some field $K$, all $A^k(\mathrm{pt})$ are finite dimensional vector spaces over $K$ (e.g., for $A^*=\mathrm K(n)^*$), and $\mathcal M=\mathcal M(X)$ for a smooth projective homogeneous variety $X$. Indeed, since $\mathcal M(X_{\,\overline k})$ is split, for an indecomposable (non-zero) summand $\mathcal N$ of $\mathcal M(X)$ the ring $S=\mathrm{res}_{\,\overline k/k}\,\mathrm{End}({\mathcal N})$ is a finite-dimensional algebra over $K$. Since the Rost nilpotence principle holds for $\mathcal M(X)$, we conclude that $S\neq 0$, and $S$ is indecomposable by~\cite[Chapter~III, Proposition~2.10]{Bass}. This implies that $S$ is local~\cite[Corollary~19.19]{Lfc}. Finally, apply the Rost nilpotence principle again to conclude that $\mathrm{End}({\mathcal N})$ is local itself.

Consider a twisted form $X$ of a cellular variety, i.e., such that $X_L$ is cellular for a field extension $L/k$, and consider a free theory $A^*$ such that the Rost nilpotence principle and the Krull--Schmidt Theorem hold for the $A$-motive of $X$. Assume additionally that the $A$-motive of a point is indecomposable. Then for every summand $\mathcal N$ of $\mathcal M_A(X)$ its image under the restriction map $\mathrm{res}_{L/k}$ is a sum of Tate motives. The number of these Tate motives is called the {\it rank} of $\mathcal N$. For $\mathcal N\neq 0$ the Rost nilpotence principle implies that the rank of $\mathcal N$ is greater than $0$. Clearly, using induction on the rank, one can show that the motive of $X$ admits a decomposition into a {\it finite} direct sum of indecomposable summands. 

We remark that the Krull--Schmidt Theorem for $A^*=\mathrm{CH}^*$ with integral coefficients is proven for motives of quadrics in~\cite{Vlect}, cf. also~\cite{Hau}. However, the counterexamples~\cite[Example~9.4]{ChM} and \cite[Corollary~2.7]{CPSZ} provide (twisted) projective homogeneous varieties for which the Krull--Schmidt Theorem fails (see also~\cite{SeZh}). In~\cite{ChM} Chernousov and Merkurjev proved the Krull--Schmidt Theorem for motives of projective varieties homogeneous under a simple group or an inner form of a split reductive group and $A^*=\mathrm{CH}^*(-;\,\mathbb Z_{(p)})$.

\section{Background on algebraic groups}

\subsection{Schubert calculus}
\label{schcal}

Let $G$ denote a split semisimple group of rank $l$ over $k$, $T\cong(\mathbb G_{\mathrm{m}})^{\times\,l}$ a fixed split maximal torus of $G$, and $B$ a Borel subgroup containing $T$. Let $\Pi=\{\alpha_1,\ldots,\alpha_l\}$ be the respective set of simple roots of the root system $\Phi$ of $G$. 
For a subset $\Theta\subseteq\Pi$ consider the corresponding parabolic subgroup $P_\Theta$ in $G$, generated by $B$ and by the root subgroups $U_{-\alpha}$ for $\alpha\in\Theta$. In particular, $B=P_\emptyset$, and the maximal parabolic subgroups are $P_i=P_{\Pi\setminus\{\alpha_i\}}$.

For a parabolic subgroup $P$ in $G$ we denote its Levi part by $L$, its unipotent radical by $U$, and the opposite unipotent radical by $U^-$. We also denote the commutator subgroup of $L$ by $C=[L,\,L]$. In the particular case $G=\mathrm{SO}_m$, $P=P_1$, and $L=L_1$ its Levi subgroup, the group $C$ is isomorphic to $\mathrm{SO}_{m-2}$.

The Weyl group $W$ of $G$ is generated by simple reflections $s_i=s_{\alpha_i}$ corresponding to simple roots $\alpha_i\in\Pi$. We denote by $l(v)$ the length of $v\in W$ in simple reflections. 
The longest word of $W$ is denoted by $w_0$. 

For $P=P_\Theta$ consider ${W_P=\langle s_i\mid i\in\Theta\rangle}$ and let us denote $$W^P=\{v\in W\mid l(vs_i)=l(v)+1\ \ \forall\, i\in\Theta\}.$$ Then the map
$$
W^P\times W_P\rightarrow W
$$
sending a pair $(u,v)$ to the product $uv$ is a bijection and $l(uv)=l(u)+l(v)$~\cite[Proposition~2.2.4]{BBcox}. This immediately implies that $W^P$ is a set of the minimal representatives for the elements of $W/W_P$.

Consider now the case $A^*=\mathrm{CH}^*$, and let $X_w$ denote the classes $[\overline{BwB/B}]$ of Schubert varieties in $\mathrm{CH}^*(G/B)$. Observe that these varieties are not necessarily smooth. It is well-known that $\{X_w\mid w\in W\}$ is a free basis of $\mathrm{CH}^*(G/B)$. 
It can be also convenient to replace $B$ by the corresponding opposite parabolic subgroup $B^-$ in the Bruhat decomposition, and introduce notation $Z_w$ for the classes of $\overline{B^-wB/B}$ in $\mathrm{CH}(G/B)$. Observe that $G$ acts on $G/B$ with left translations in such a way that $w_0(BwB/B)=B^-w_0wB/B$ as subvarieties, and taking closures we obtain
$$
w_0\,\big(\overline{BwB/B}\big)=\overline{B^-w_0wB/B}.
$$
By~\cite[Lemma~1]{Gr} the induced action of $G$ on $\mathrm{CH}(G/B)$ is trivial, and, therefore, $X_w=Z_{w_0w}$ (see also~\cite[Proposition~1]{Dem}). 

More generally, the classes $X_w=[\overline{BwP/P}]$ for $w\in W^P$ form a free basis of $\CH^*(G/P)$. In the dual notation we set $Z_w=X_{w_0ww^P}\in\CH^*(G/P)$, where $w^P$ denotes the longest element of $W_P$. Then the pullback map along the natural projection $\pi\colon G/B\rightarrow G/P$ sends $Z_w\in\CH^*(G/P)$ to $Z_w\in\CH^*(G/B)$ for all $w\in W^P$ identifying $\mathrm{CH}^*(G/P)$ with the free abelian subgroup of $\mathrm{CH}^*(G/B)$ generated by $\{Z_{w}\mid w\in W^{P}\}$, see also~\cite[Section~5.1]{GPSshells}.

The next theorem is proven in~\cite[Theorem~13.13]{CPZ}.

\begin{tm*}[Calm\`es--Petrov--Zainoulline]
\label{yfree}
For any free theory $A^*$ there exists a free $A^*(\mathrm{pt})$-basis of $A^*(G/B)$ consisting of homogeneous elements $\zeta_w=\zeta_w^A$, ${w\in W}$. Moreover, $\zeta^A_w$ coincide with the images of $\zeta^\Omega_w$ under the canonical map ${\Omega^*(G/B)\rightarrow A^*(G/B)}$ and $\zeta^{\mathrm{CH}}_w$ coincide with $X_w$ defined above.
\end{tm*}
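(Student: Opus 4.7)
The strategy is to first construct the desired basis in algebraic cobordism $\Omega^*$, and then transport it to an arbitrary free theory via the universality of $\Omega^*$. Since $G/B$ is a split projective homogeneous variety, the Bruhat decomposition presents it as a cellular variety with $|W|$ cells. By the cellular K\"unneth formula discussed earlier, $A^*(G/B)$ is a free $A^*(\mathrm{pt})$-module of rank $|W|$ for every free theory, and the base-change map $\Omega^*(G/B)\otimes_{\mathbb{L}} A^*(\mathrm{pt})\to A^*(G/B)$ is an isomorphism. Hence it suffices to exhibit a homogeneous $\mathbb{L}$-basis $\{\zeta_w^\Omega\}_{w\in W}$ of $\Omega^*(G/B)$ and to define $\zeta_w^A$ as its image.

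For each $w$, fix a reduced expression $\underline{w}=(s_{i_1},\ldots,s_{i_k})$ with $k=l(w)$, and consider the Bott--Samelson variety $\widetilde{X}_{\underline{w}}=P_{i_1}\times^B\cdots\times^B P_{i_k}/B$. This is smooth projective of dimension $k$, and the iterated multiplication map $f_{\underline{w}}\colon\widetilde{X}_{\underline{w}}\to G/B$ is birational onto the Schubert variety $\overline{BwB/B}$ (it is an isomorphism over the open Schubert cell $BwB/B$). Define $\zeta_w^\Omega=(f_{\underline{w}})_*(1)$; this is a homogeneous element of $\Omega^*(G/B)$ of codimension $\mathrm{dim}\,G/B-l(w)$.

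To prove that $\{\zeta_w^\Omega\}$ is an $\mathbb{L}$-basis, reduce along the augmentation $\mathbb{L}\to\mathbb{Z}$ corresponding to the canonical map $\Omega^*\to\mathrm{CH}^*$. Since $f_{\underline{w}}$ is a birational morphism between smooth projective varieties, the image of $\zeta_w^\Omega$ in $\mathrm{CH}^*(G/B)$ equals $[\overline{BwB/B}]=X_w$, which is known to form a $\mathbb{Z}$-basis. Because $\Omega^*(G/B)$ is a graded free $\mathbb{L}$-module of the same rank $|W|$ and the $\zeta_w^\Omega$ are homogeneous of the expected degrees, a graded Nakayama argument promotes the Chow basis to an $\mathbb{L}$-basis; this simultaneously establishes the identification $\zeta_w^{\mathrm{CH}}=X_w$.

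The principal technical point, and the main obstacle, is the graded Nakayama step: one must check that a family of homogeneous elements of a finitely generated graded free $\mathbb{L}$-module, whose images modulo $\mathbb{L}^{<0}$ form a $\mathbb{Z}$-basis, is itself an $\mathbb{L}$-basis. Using the cellular filtration of $G/B$ by Bruhat codimension, one arranges the transition matrix with respect to any cellular basis in block-triangular form. The diagonal blocks, matching elements of equal codimension, lie in $\mathbb{L}^0=\mathbb{Z}$ and are invertible by the Chow comparison, while the off-diagonal entries are forced by degree reasons into strictly negative parts of $\mathbb{L}$. This gives a unipotent-type correction that preserves invertibility, settling the basis property. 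With this in hand, the remaining assertions of the theorem -- compatibility with the canonical map $\Omega^*\to A^*$ and the identification with $X_w$ in the Chow case -- are immediate from the construction.
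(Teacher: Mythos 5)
The paper does not present its own proof of this theorem; it simply cites \cite[Theorem~13.13]{CPZ}, remarking only that the classes $\zeta_w$ arise from desingularizations of the Schubert varieties $\overline{BwB/B}$. Your reconstruction --- Bott--Samelson resolutions pushed forward to give $\zeta_w^\Omega$, comparison with $\mathrm{CH}^*$ via base change $\Omega^*(G/B)\otimes_\mathbb{L}\mathbb{Z}\cong\mathrm{CH}^*(G/B)$, and invertibility of the transition matrix by its block-triangular structure in codimension with unipotent lower part --- is precisely the CPZ argument that the paper invokes, and it is correct. Two small sharpenings: $f_{\underline w}$ is not a birational morphism \emph{onto} $G/B$ but a proper map birational onto the (possibly singular) Schubert variety $\overline{BwB/B}$, so the identity $(f_{\underline w})_*(1)=X_w$ in Chow rests on the degree-one statement over the image; and in the triangularity step one should note that the codimension range is finite, which is what makes the strictly lower-triangular part nilpotent and the correction invertible over $\mathbb{L}$.
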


More precisely, $\zeta_w$ are given by resolutions of singularities of Schubert varieties $\overline{BwB/B}$.

\subsection{The characteristic map}
\label{characteristic}

Recall that $A^*(\mathrm BT)$ for a classifying space $\mathrm BT$ of a split torus $T$ is isomorphic to $A^*(\mathrm{pt})[\![x_1,\ldots,x_l]\!]$. If $M$ is a group of characters of $T$ we can identify $A^*(\mathrm BT)$ with $A^*(\mathrm{pt})[\![M]\!]_{F_A}$ as defined in~\cite[Definition~2.4]{CPZ}, see~\cite[Theorem~3.3]{CZZ}, where $F_A$ is the formal group law of the theory $A^*$. 
We fix a basis $\chi_1,\ldots,\chi_{l}$ of $M$ and write $x_{\chi_i}$ for $x_i\in A^*(\mathrm BT)$. For an arbitrary $\lambda\in M$ we have an element $x_\lambda\in A^*(\mathrm BT)$ defined according to the rule  $x_{\lambda+\mu}=F_A(x_\lambda,\,x_\mu)$.

Following~\cite[Remark~1.2.12]{LM} for any free theory $A^*$ and any smooth irreducible variety $X$ with the function field $K$ we consider a map $\mathrm{deg}_A$ defined as the restriction to the generic $K$-point of $X_K$
$$
\mathrm{deg}_A\colon A^*(X)\rightarrow A^*(K)\cong A^*(\mathrm{pt}).
$$
We remark that the map $\mathrm{deg}_A$ canonically splits by 
$$
A^*(\mathrm{pt})\rightarrow A^*(X),\ \ a\mapsto a\cdot 1_{A^*(X)}.
$$
For an augmented algebra $A$ we denote $A^+$ its augmentation ideal and we say that the sequence of augmented algebras $$(A_i,d_i\colon A_i\rightarrow A_{i+1})$$ is exact if $\mathrm{Ker}\,d_i$ coincides with the ideal generated by $\mathrm{Im}\,d_{i-1}\cap A_i^+$. 

By~\cite[Proposition~5.1, Example~5.6]{GZ} the sequence
\begin{align}
\label{Kr12}
\Omega^*(\mathrm BT)\rightarrow\Omega^*(G/B)\rightarrow\Omega^*(G)\rightarrow\mathbb L
\end{align}
is a right exact sequence of augmented $\mathbb L$-algebras, where the first arrow is the {\it characteristic map} $\mathfrak c=\mathfrak c_\Omega$ as defined in~\cite[Definition~10.2]{CPZ} and the second one is the pullback along $G\rightarrow G/B$. This, obviously, gives a similar sequence for any free theory $A^*$.
Observe that the above sequence can be continued to the left as in~\cite[Theorem~10.2]{CZZ}.

\subsection{BGG--Demazure operators}
\label{BGGD}

Divided difference operators $\Delta_i$ were defined independently by Bernstein--Gelfand--Gelfand~\cite{BGG} and Demazure~\cite{Dem} to describe the characteristic map $\mathfrak c_{\mathrm{CH}}$. These operators were generalized to arbitrary oriented cohomology theories in~\cite{CPZ}.

Observe that the action of the Weyl group $W$ on the group $M$ of characters of $T$ induces an action of $W$ on $A^*(\mathrm BT)=A^*(\mathrm{pt})[\![M]\!]_{F_A}$ according to the rule $s_\alpha(x_\lambda)=x_{\lambda-\alpha^\vee(\lambda)\alpha}$. We now define divided difference operators for $\Omega^*$ by the formula
$$
\Delta^\Omega_i(u)=\frac{u-s_i(u)}{x_{\alpha_i}},
$$
and for any other theory by the change of coefficients~\cite[Definition~3.5]{CPZ}.

Consider the case $A^*=\mathrm{CH}^*$. Then for any two reduced decompositions $w=s_{i_1}\ldots s_{i_k}=s_{j_1}\ldots s_{j_k}$ the operators $\Delta_{i_1}\circ\ldots\circ\Delta_{i_k}$ and $\Delta_{j_1}\circ\ldots\circ\Delta_{j_k}$ coincide and we denote such a composition simply by $\Delta_w$. Moreover, if a decomposition $s_{i_1}\ldots s_{i_k}$ is not reduced, then $\Delta_{i_1}\circ\ldots\circ\Delta_{i_k}=0$, see~\cite[Theorem~3.4]{BGG}.

We have the following description of $\mathfrak c_{\mathrm{CH}}$ in terms of divided difference operators. For a homogeneous $u\in\mathrm{CH}^*(\mathrm BT)$ of degree $s$ one has by~\cite[4.5.~Th\'eor\`eme~1\,(b)]{Dem}
$$
\mathfrak c_{\mathrm{CH}}(u)=(-1)^{l(w_0)-s}\sum_{l(w)=s}\Delta_w(u)Z_w,
$$
where $\Delta_w(u)\in\mathrm{CH}^0(\mathrm BT)=\mathbb Z$ since $l(w)=s$.

Further, for a minimal parabolic subgroup $P_{\{\alpha_i\}}$ consider the natural projection 
$$
\pi_i\colon G/B\rightarrow G/P_{\{\alpha_i\}},
$$
and define an operator $\widetilde{\Delta}_i(z)=-(\pi_i^{\mathrm{CH}}\circ(\pi_i)_{\mathrm{CH}})(z)$ on $\mathrm{CH}^*(G/B)$. Then for the characteristic map $\mathfrak c_{\mathrm{CH}}$ one has $\widetilde{\Delta}_i\circ\mathfrak c_{\mathrm{CH}}=\mathfrak c_{\mathrm{CH}}\circ\Delta_i$~\cite[Theorem~13.13]{CPZ}. 

It also follows, e.g., from~\cite[Lemma~13.3]{CPZ}, that the sequence $\widetilde{\Delta}_{i_1}\circ\ldots\circ\widetilde{\Delta}_{i_k}$ does not depend on a reduced decomposition of $w=s_{i_1}\ldots s_{i_k}$ and equals $0$ if it is not reduced. We write $\Delta_i$ for $\widetilde{\Delta}_i$ if it does not lead to a confusion.

We can also describe the action of ${\Delta}_i$ on the Schubert basis. Take $w\in W$ and assume that $l(ws_i)=l(w)+1$ for some $i$. Then by~\cite[Lemma~13.3]{CPZ} and~\cite[Lemma~12.4]{CPZ} we have $\Delta_{i}(X_{w})=-X_{ws_i}$.

This means in particular that every $X_w$ can be obtained from $\pm\mathrm{pt}=\pm X_{1}$ by a sequence of $\Delta_i$ and, moreover, since $w_0$ is greater than $w$ in the weak Bruhat order for every $w\in W\setminus\{w_0\}$~\cite[Definition~3.1.1\,{\rm(}i{\rm)}]{BBcox}, there always exists a sequence $(i_1,\ldots,i_k)$ such that 
\begin{align}
\label{bgg-schubert-duality}
\Delta_{i_1}\circ\ldots\circ\Delta_{i_k}(X_w)=\pm X_{w_0}=\pm1\in\mathrm{CH}^*(G/B)
\end{align}
(here $s_{i_1}\ldots s_{i_k}$ is a reduced expression for $w^{-1}w_0$).

Moreover, if we apply the same operator $\Delta_{w^{-1}w_0}$ to another 
$$w'=s_{j_1}\ldots s_{j_s}\neq w
$$ 
with $s=l(w')=l(w)$, we obtain $\Delta_{w^{-1}w_0}(X_{w'})=0$, since $s_{i_1}\ldots s_{i_k}\cdot s_{j_1}\ldots s_{j_s}$ is not a reduced expression. In this sense $\Delta_w$ and $X_w$ are dual to each other.

\begin{rk*}
The proof of~\cite[Theorem~13.13]{CPZ} contains a misprint, more precisely, $A_{I_w}(z_0)$ should be changed to $A_{I_w^{\mathrm{rev}}}(z_0)$.
\end{rk*}

Similar results are obtained in~\cite{CPZ} for any theory $A^*$. We introduce the notation 
$$
F_A(x,\,y)=x+y+xy\cdot G(x,y),
$$ 
and for any simple root $\alpha_i\in\Pi\subset M$ we denote 
$$
\kappa_i=\kappa_i^A=G\big(\mathfrak c_A(x_{\alpha_i}),\,\mathfrak c_A(x_{\alpha_{-i}})\big)
$$
(observe that $\kappa_i^{\mathrm{CH}}=0$). As above, for a minimal parabolic subgroup $P_{\{\alpha_i\}}$ consider the natural projection $\pi_i\colon G/B\rightarrow G/P_{\{\alpha_i\}}$ and define the Demazure operator $\Delta_i=\Delta_i^A$ on $A^*(G/B)$ by
\begin{align}
\label{sameBGGD}
\Delta_i(z)=\kappa_i\,z-(\pi_i^A\circ(\pi_i)_A)(z),
\end{align}
and the action of simple reflections on $A^*(G/B)$ by
$$
s_i(z)=z-\mathfrak c_A(x_{\alpha_i})\cdot \Delta_i(z).
$$
By~\cite[Lemma~3.8]{PShopf} we have the following Leibniz rule for Demazure operators:
\begin{align}
\label{leibniz}
\Delta_i(uv)=\Delta_i(u)v+s_i(u)\Delta_i(v),
\end{align}
and $s_i$ are $A^*(\mathrm{pt})$-algebra homomorphisms defining an action of the Weyl group $W$ on $A^*(G/B)$. Moreover, $\mathfrak c_A$ respects the action of the Weyl group (see the proof of~\cite[Lemma~3.8]{PShopf}).

\subsection{Torsors}
\label{torsor}

Let $G$ be a smooth affine algebraic group over the base field $k$ of characteristic $0$. We identify the Galois cohomology group $$\mathrm H^1(k,G)=\mathrm H^1\big(\mathrm{Gal}({\overline k}/k),\,G(\overline k)\big)$$ with the set of isomorphism classes of $G$-torsors over $k$. The natural homomorphism $G\rightarrow\mathrm{Aut}(G)$ induces a map on Galois cohomology and, therefore, each $G$-torsor $E$ defines an element $\xi\in\mathrm H^1\big(k,\mathrm{Aut}(G)\big)$ and the corresponding twisted form $\!\,_EG=\!\,_\xi G$ of $G$, see~\cite{Se65} or~\cite[Chapter~VII]{KMRT}.

For a representation $\rho\colon G\hookrightarrow\mathrm{GL}_m$ of $G$ we call $\mathrm{GL}_m\rightarrow\mathrm{GL}_m/\rho(G)$ a {\it standard classifying $G$-torsor}, and its fiber over the generic point of $X=\mathrm{GL}_m/\rho(G)$ we call a {\it standard generic $G$-torsor} over $k(X)$ or just a {\it generic torsor} (see~\cite[Chapter~1, \S5]{GSM}).

Consider a (split) group $G=\mathrm{SO}_m$ over $k$. Let $\!\,_EG$ be a generic group, i.e., the twisted form of $G$ corresponding to a generic $G$-torsor $E$, see~\cite[Section~3]{PSrost}. Strictly speaking, $\!\,_EG$ is defined over a field extension $L$ of $k$, but we will denote $\mathrm{SO}_m$ over $L$ or $\overline L$ by the same letter $G$. For each parabolic subgroup $P$ of $G$ there exists a unique $\!\,_EG$-homogeneous variety $X$ which is a twisted form of $G/P$, see~\cite[Section~1.1]{ChM}, moreover, $X=E/P$. For a maximal parabolic subgroup $P=P_1$ corresponding to the first simple root in the Dynkin diagram (according to the enumeration of Bourbaki) we refer to $E/P_1$ as the {\it generic quadric} of dimension $D=m-2$. We also remark that $E/P_1$ is generic with respect to $\mathrm{SO}_m$, and there exist slightly different notions of generic quadric, which are compared in~\cite{Kgen}.

Until the end of the section we reserve the notation $Q$ for the split quadric $G/P_1$.

For any $G$-torsor $E$ and any free theory $A^*$ there exists a bi-algebra $H^*$ of~\cite[Definition~4.6]{PShopf} defined as an augmented algebra quotient of $A^*(G)$ by the image of $A^*(E)$ and a co-action of $H^*$ on $A^*(G/P)$ for any parabolic subgroup $P$ of $G$~\cite[Definition~4.10]{PShopf}. It is shown in~\cite[Remark~4.8]{PShopf} that for $A^*=\mathrm{CH}(-;\,\mathbb F_p)$ the bi-algebra $H^*$ carries essentially the same information as the {\it J-invariant}~\cite{PSjinv,VJinv}. We will be interested only in the case of a generic torsor $E$. In this case $H^*=A^*(G)$ for every free theory $A^*$~\cite[Example~4.7]{PShopf} and the co-action 
$$
\rho=\rho_A\colon A^*(G/P)\rightarrow A^*(G)\otimes_{A^*(\mathrm{pt})}A^*(G/P)
$$
is given by the pullback map along the left multiplication by elements of $G$ on $G/P$ composed with the K\"unneth isomorphism~\cite[Lemma~4.3]{PShopf}.

Consider smooth projective homogeneous varieties $G/P$, $G/P'\in\Sm$ and let $X$ and $X'$ be the respective $E$-twisted forms. For an element $\alpha\in A^*(X\times X')$ define the {\it realization map}
$$
\alpha_\star\colon A^*(X)\rightarrow A^*(X')
$$
by the formula $(\mathrm{pr}_{X'})_A\circ\mathrm{m}_\alpha\circ\mathrm{pr}_X^A$, where $\mathrm{m}_\alpha$ stands for the multiplication by $\alpha$, and $\mathrm{pr}_X$, $\mathrm{pr}_{X'}$ are the natural projections from $X\times X'$ to $X$ or $X'$ respectively. By~\cite[Theorem~4.14]{PShopf}, 
$$
\overline\alpha_\star\colon A^*(\overline X)\rightarrow A^*(\overline{X'})
$$
is a homomorphism of $A^*(G)$-comodules. This fact allows to consider an (additive) functor from the full subcategory of $\Mot A$ generated by the motives of smooth projective homogeneous $\!\,_EG$-varieties to the category of graded $A^*(G)$-comodules~\cite[Remark~4.15]{PShopf}. In particular, the motivic decomposition of $E/P$ gives us a decompo\-sition of $A^*(G/P)$ into a sum of $A^*(G)$-comodules (as above, with some abuse of notation, we usually write $G/P$ for $X_{\,\overline k}$).

\section{Cobordism ring of a split quadric}\label{multtabl}

\subsection{Multiplication in $\Omega^*(Q)$ of a split quadric $Q$}\label{Qdef}
For any smooth projective cellular variety $X$ and an oriented cohomology theory $A^*$
it is easy to describe the structure of $A^*(X)$ as an abelian group. 
For a split quadric $Q$ we will describe the multiplication in terms of the formal group law. 
For certain theories $A^*$, e.g., for the Morava K-theory $\mathrm K(n)^*$ with $\mathbb F_2$ coefficients, 
the multiplication table has a very simple description. 
We also describe an explicit formula for the pushforward map along the structure morphism from $Q$ to the point.

Let us denote $H=H_{\mathbb P^n}=c_1^\Omega\big(\mathcal O_{\mathbb P^n}(1)\big)$ and recall that the powers of $H^k$ for $0\leqslant k\leqslant n$ form a free basis of $\Omega^*\big(\mathbb P^n\big)$ by the projective bundle formula. Moreover, $H^k$ coincide with the classes of projective subspaces of smaller dimensions in $\mathbb P^n$.

We follow the notation of~\cite[Chapter~XIII, \S\,68]{EKM}. Let $Q$ be a smooth projective quadric of dimension $D$ over $k$ defined by the non-degenerate quadratic form $\varphi$ on a vector space $V$ of dimension $D+2$. We write $D=2d$ for $D$ even or $D=2d+1$ for $D$ odd. We assume that the quadric $Q$ is split, i.e., $\varphi$ has the maximal possible Witt index $d+1$. Let us denote by $W$ a maximal totally isotropic subspace of $V$ and by $W^{\bot}$ its orthogonal complement in $V$ (for $D$ even $W^{\bot}=W$ and for $D$ odd $W$ is a hyperplane in $W^{\bot}$). We denote by $\mathbb P(V)$ the projective space of $V$ which contains $Q$ as a hypersurface. Then $\mathbb P(W)$ is contained in $Q$ and $Q\setminus\mathbb P(W)\rightarrow\mathbb P(V/W^{\bot})$ is a vector bundle
\begin{align}
\label{cell}
\xymatrix{
\mathbb{P}(W)\ar@{^(->}[r]^{\ \ i}&Q&\,Q\setminus\mathbb{P}(W)\ar@{->}[d]_p\ar@{_(->}[l]_{j\ \quad}\\
&&\mathbb{P}(V/W^\bot).
}
\end{align}

We denote 
$$\,H_{\mathbb P(W)}=c_1^\Omega\big(\mathcal O_{\mathbb P(W)}(1)\big)\in\Omega^*(\mathbb P(W)),$$ 
$$
l_i=l_i^\Omega=i_\Omega(H_{\mathbb P(W)}^{d-i}),\ \ \text{and }\ \ h=h_\Omega=c_1^\Omega(\mathcal O_Q(1)).
$$ 

The elements $l_i$ for $0\leqslant i\leqslant d$ and the powers $h^k$ of $h$ for $0\leqslant k\leqslant d$ form a free basis of $\Omega^*(Q)$ over $\mathbb L$, cf., e.g.,~\cite[Co\-rol\-lary~2.9]{VY} or~\cite[Theorem~6.5]{NZ}.

\begin{prop}
\label{table}
In the above notation the multiplication table of $\Omega^*(Q)$ is determined by the identities
\begin{align}
h_{}\cdot l^{}_i&=\begin{cases}l_{i-1}^{},&i>0,\\0,&i=0;\end{cases}\label{hl}\\
l_i^{}\cdot l_j^{}&=\begin{cases}l_0^{},&i=j=d,\,\text{ and }\ D\equiv0\,\ \mathrm{mod}\ 4,\\0,&\text{otherwise};\end{cases}\label{ll}\\
h_{}^{d+1}&=\cfrac{[2]_\Omega(h)}{h^{D-2d}}\,l_d=\sum_{i=1}^{D-d}b_i\,l_{D-d-i}^{}\,\label{hh},
\end{align}
where $[2]_\Omega(t)=F_\Omega(t,t)=\sum_{i\geq1}b_i\,t^i\in\mathbb L[\![t]\!]$ is the multiplication by $2$ in the sense of the universal formal group law.
\end{prop}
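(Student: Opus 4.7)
The plan is to treat each of the three identities separately, using the projection formula and the Normalization identity together with the cellular decomposition~(\ref{cell}) of $Q$.

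Relation~(\ref{hl}) follows immediately from the projection formula applied to $i\colon\mathbb{P}(W)\hookrightarrow Q$. Since $\mathcal{O}_Q(1)|_{\mathbb{P}(W)}=\mathcal{O}_{\mathbb{P}(W)}(1)$, one has $i^*h=H_{\mathbb{P}(W)}$, hence
\[
h\cdot l_i = i_\Omega\bigl(i^*h\cdot H_{\mathbb{P}(W)}^{d-i}\bigr) = i_\Omega\bigl(H_{\mathbb{P}(W)}^{d-i+1}\bigr),
\]
which is $l_{i-1}$ for $i\geq 1$ and $0$ for $i=0$, using $H_{\mathbb{P}(W)}^{d+1}=0$ on $\mathbb{P}^d$.

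For relation~(\ref{ll}) I would use the self-intersection formula $i^*i_\Omega(\alpha)=e(N)\cdot\alpha$ for the regular embedding $i$ with normal bundle $N=N_{\mathbb{P}(W)/Q}$ of rank $D-d$, where $e(N)$ denotes the top Chern class in $\Omega^*$. Combined with the projection formula this yields $l_i\cdot l_j = i_\Omega\bigl(H_{\mathbb{P}(W)}^{2d-i-j}\cdot e(N)\bigr)$. For $D=2d+1$ we have $e(N)\in\Omega^{d+1}(\mathbb{P}^d)=0$, so all such products vanish. For $D=2d$ the group $\Omega^d(\mathbb{P}^d)=\mathbb{Z}\cdot H_{\mathbb{P}(W)}^d$ forces non-vanishing only when $i=j=d$, and I would compute $e(N)$ from the canonical sequence
\[
0\to N\to (V/W)\otimes\mathcal{O}_{\mathbb{P}(W)}(1)\to\mathcal{O}_{\mathbb{P}(W)}(2)\to 0
\]
via the Whitney formula, giving $c_t(N)=(1+H_{\mathbb{P}(W)}t)^{d+1}(1+[2]_\Omega(H_{\mathbb{P}(W)})t)^{-1}$. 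Crucially, modulo $H_{\mathbb{P}(W)}^{d+1}$ the higher $b_i$-corrections to $[2]_\Omega$ contribute nothing to the degree-$d$ part, so the calculation reduces to the classical Chow one and yields $e(N)=H_{\mathbb{P}(W)}^d$ when $d$ is even and $0$ when $d$ is odd. Since $d$ even is equivalent to $D\equiv 0\pmod 4$ and $i_\Omega(H_{\mathbb{P}(W)}^d)=l_0$, this is precisely~(\ref{ll}).

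For~(\ref{hh}) I would combine the localization sequence with a pushforward to $\mathbb{P}^{D+1}$. Under the bundle structure $Q\setminus\mathbb{P}(W)\to\mathbb{P}^d$ the class $j^*h$ identifies with the hyperplane class, so $j^*(h^{d+1})=0$ and thus $h^{d+1}=i_\Omega(\beta)$ for a unique homogeneous $\beta\in\Omega^*(\mathbb{P}(W))$. To identify $\beta$ I push forward along $i_Q\colon Q\hookrightarrow\mathbb{P}^{D+1}$: the Normalization identity gives $(i_Q)_\Omega(1)=[2]_\Omega(H)$, so by the projection formula
\[
(i_Q)_\Omega(h^{d+1}) = H^{d+1}\cdot[2]_\Omega(H) = \sum_{i=1}^{D-d} b_i\, H^{d+1+i} \ \in\ \mathbb{L}[H]/H^{D+2}.
\]
The composite $(i_Q)_\Omega\circ i_\Omega$ is the pushforward along the linear embedding $\mathbb{P}(W)\hookrightarrow\mathbb{P}^{D+1}$ of codimension $D+1-d$, which sends $H_{\mathbb{P}(W)}^k$ to $H^{D+1-d+k}$; the powers $H^{D+1-d+k}$ for $0\leq k\leq d$ are $\mathbb{L}$-linearly independent in $\mathbb{L}[H]/H^{D+2}$, so this map is injective. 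Matching the two pushforwards determines $\beta$ uniquely, and produces the claimed $h^{d+1}=\sum_{i=1}^{D-d}b_i\,l_{D-d-i}$.

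The main obstacle is the Euler class computation in~(\ref{ll}): one must verify that the formal-group-law corrections to the top Chern class all disappear on $\mathbb{P}^d$. This works precisely because the truncation $H_{\mathbb{P}(W)}^{d+1}=0$ annihilates every contribution of $b_i$ for $i\geq 2$, reducing the problem to a classical binomial identity. A minor subtlety in~(\ref{hh}) is the injectivity of the linear pushforward, which is used to upgrade equality in $\Omega^*(\mathbb{P}^{D+1})$ to equality in $\Omega^*(Q)$.
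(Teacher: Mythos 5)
Your proof is correct, and it differs from the paper's most significantly in the treatment of~(\ref{ll}). For~(\ref{hh}) the two arguments are essentially the same: both first show $h^{d+1}\in\mathrm{Im}(i_\Omega)$ (you by localization along~(\ref{cell}), the paper by the degree observation that the would-be coefficients of $h^k$ lie in $\mathbb L^{>0}=0$), then push to $\Omega^*(\mathbb P(V))$ and use the projection formula with the Normalization identity~(\ref{divisor}). For~(\ref{hl}) your direct application of the projection formula on $Q$ via $i^*h=H_{\mathbb P(W)}$ is cleaner than the paper's, which derives it by the same pushforward-to-$\mathbb P(V)$ method as~(\ref{hh}). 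For~(\ref{ll}), the paper observes by degree reasons that only $l_d^2=a\,l_0$ with $a\in\mathbb Z$ is at stake, and then invokes $\Omega^D(Q)\cong\mathrm{CH}^D(Q)$ from~\cite[Lemma~4.5.10]{LM} to reduce $a$ to the classical Chow computation quoted from~\cite[\S\,68]{EKM}. You instead compute $l_il_j=i_\Omega\big(H_{\mathbb P(W)}^{2d-i-j}\,e(N)\big)$ via the self-intersection formula and evaluate $e(N)$ from the normal bundle sequence by the Whitney formula, observing that the corrections $b_i$, $i\geq2$, to $[2]_\Omega$ contribute only $H_{\mathbb P(W)}^{>d}=0$ terms, so the answer collapses to the classical binomial identity. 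The paper's argument for~(\ref{ll}) is a one-liner; yours is longer but more self-contained, making explicit why the cobordism coefficient reduces to the Chow integer rather than appealing to the Levine--Morel comparison isomorphism.
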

\begin{proof}
Observe that the basis elements $l_i$ and $h^k$ are homogeneous elements of $\Omega^*(Q)$ of degree $D-i$ and $k$, respectively, and $\mathbb L$ is graded by non-positive numbers. Therefore, in the decomposition
$$
h^{d+1}=\sum_i a_i\,l_i+\sum_kc_k\,h^k
$$
of $h^{d+1}$ as a sum of basis elements we necessarily have $c_k=0$ for all $k$ by the degree reasons, i.e., $h^{d+1}$ lies in the image of $i_\Omega$. Since $\Omega^*\big(\mathbb P(W))$ injects into $\Omega^*\big(\mathbb P(V)\big)$ we can pushforward the above equality to $\Omega^*\big(\mathbb P(V)\big)$ to determine $a_i$. Let $I\colon Q\hookrightarrow\mathbb P(V)$ denote the inclusion, then
$$
I_\Omega\big(I^\Omega(H^{d+1})\cdot 1\big)=\sum_ia_i\,H^{D+1-i}.
$$
The projection formula implies that the left hand side is equal to $H^{d+1}I_\Omega(1)$ and putting $D=Q$ in~(\ref{divisor}) we have 
$$
I_\Omega(1_{\Omega^*(Q)})=c_1^\Omega\big(\mathcal O(2)\big)=[2]_\Omega\Big(c^\Omega_1\big(\mathcal O(1)\big)\Big)=\sum_{i\geq1}b_iH^i.
$$
Then, clearly, $a_{D-d-i}=b_i$. 
The same argument proves~(\ref{hl}).
 
Finally, for the degree reasons we only need to consider~(\ref{ll}) for $D$ even and $i=j=d$, where $l_d^2=al_0$ for some $a\in\mathbb Z$. Since $\Omega^D(Q)\cong\mathrm{CH}^D(Q)$ by~\cite[Lemma~4.5.10]{LM}, we can determine $a$ modulo $\mathbb L^{<0}$. But for the Chow theory the result is well-known, see, e.g.,~\cite[\S68]{EKM}.
\end{proof}

Our initial computation of the above multiplication table was more awkward and the above simplification is suggested by Alexey Ananyevskiy.

\subsection{Pushforwards along structure morphisms}

For $X\in\Sm$ let $\chi\colon X\rightarrow\mathrm{pt}$ be the structure morphism and let us denote by $[X]\in\mathbb L$ the pushforward of $1\in\Omega^*(X)$ along $\chi$, i.e., $[X]=\chi_\Omega(1_{\Omega^*(X)})$. We will describe $\chi$ for $X=Q$ a split quadric.

\begin{prop}
\label{euler}
In the notation of Proposition~{\ref{table}} we have the following formulae:
\begin{align}
\chi_\Omega(l_i)&=[\mathbb P^i];\label{el}\\
\chi_\Omega(h^k)&=\sum_{j=1}^{D+1-k}\!\!b_j\,[\mathbb P^{D+1-k-j}]\,.\label{eh}
\end{align}
\end{prop}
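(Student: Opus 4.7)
The plan is to reduce both identities to pushforwards of powers of the hyperplane class on a projective space, where the answer is the classical one.

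For (\ref{el}), I would start from the definition $l_i = i_\Omega(H_{\mathbb{P}(W)}^{d-i})$, where $i\colon\mathbb{P}(W)\hookrightarrow Q$ is the inclusion from (\ref{cell}). Since the structure morphism satisfies $\chi_Q\circ i=\chi_{\mathbb{P}(W)}$, functoriality of pushforward gives
\[
\chi_\Omega(l_i)=(\chi_{\mathbb{P}(W)})_\Omega\bigl(H_{\mathbb{P}(W)}^{d-i}\bigr).
\]
Identifying $\mathbb{P}(W)\cong\mathbb{P}^d$ (since $\dim W=d+1$), the class $H_{\mathbb{P}(W)}^{d-i}$ is represented by a projective subspace of dimension $i$, so its pushforward to a point is $[\mathbb{P}^i]$. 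This gives (\ref{el}) immediately.

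For (\ref{eh}), let $I\colon Q\hookrightarrow\mathbb{P}(V)$ be the inclusion (so $\mathbb{P}(V)\cong\mathbb{P}^{D+1}$), and recall $h=I^\Omega(H)$ where $H=c_1^\Omega(\mathcal O_{\mathbb P(V)}(1))$. Using $\chi_Q=\chi_{\mathbb{P}(V)}\circ I$ together with the projection formula,
\[
\chi_\Omega(h^k)=(\chi_{\mathbb{P}(V)})_\Omega\bigl(I_\Omega(I^\Omega(H^k)\cdot 1)\bigr)=(\chi_{\mathbb{P}(V)})_\Omega\bigl(H^k\cdot I_\Omega(1)\bigr).
\]
Since $Q$ is a quadric hypersurface in $\mathbb P(V)$, i.e.\ a smooth divisor of class $\mathcal O(2)$, the Normalization identity (\ref{divisor}) yields $I_\Omega(1)=c_1^\Omega(\mathcal O(2))=[2]_\Omega(H)=\sum_{j\geq 1}b_jH^j$, exactly as in the derivation of (\ref{hh}). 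Substituting,
\[
\chi_\Omega(h^k)=\sum_{j\geq 1}b_j\,(\chi_{\mathbb{P}(V)})_\Omega(H^{k+j}).
\]
Since $H^{k+j}$ is either the class of a linear subspace of dimension $D+1-k-j$ in $\mathbb{P}(V)$ (when this number is nonnegative) or vanishes in $\Omega^*(\mathbb{P}^{D+1})$, the pushforward equals $[\mathbb{P}^{D+1-k-j}]$ for $1\leq j\leq D+1-k$ and $0$ otherwise. This truncates the sum to the stated range and gives (\ref{eh}).

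There is no real obstacle here; the argument is a direct application of the projection formula and the Normalization identity, mirroring the computation already used in the proof of Proposition~\ref{table}. The only point to double-check is that the pushforward along $\chi_{\mathbb{P}^n}$ of $H^{n-i}$ indeed equals $[\mathbb{P}^i]$ in $\mathbb L$, which follows since $H^{n-i}$ is represented (set-theoretically and with the correct multiplicity) by a linearly embedded $\mathbb{P}^i$ and then $\chi_\Omega=\chi_{\mathbb P^i,\Omega}\circ(\text{inclusion})_\Omega$ applied to $1$.
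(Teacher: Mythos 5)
Your proof is correct and follows essentially the same route as the paper: both parts reduce to pushforwards along structure morphisms of projective spaces, with (\ref{el}) read off directly from $l_i=i_\Omega(H_{\mathbb{P}(W)}^{d-i})$ and functoriality, and (\ref{eh}) obtained exactly as in the paper via the factorization through $\mathbb{P}(V)$, the projection formula, and the Normalization identity $I_\Omega(1)=[2]_\Omega(H)$. The paper merely calls (\ref{el}) ``obvious'' where you spell it out, but the underlying observation (that $H_{\mathbb{P}^n}^{n-i}$ pushes forward to $[\mathbb{P}^i]$) is the same.
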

\begin{proof}
Identity~(\ref{el}) is obvious and~(\ref{eh}) follows from~(\ref{divisor}) with the use of the projection formula
$$
\chi_\Omega(h^k)=\chi_\Omega\big(I_\Omega(I^\Omega H^k)\big)=\chi_\Omega\big(H^k\cdot I_\Omega(1_{\Omega^*(Q)})\big)=\chi_\Omega\left(\sum_{j=1}^{D+1-k}b_j\,H^{k+j}\right).
$$
\end{proof}

Obviously, the results of Propositions~\ref{table} and~\ref{euler} can be applied to any oriented cohomology theory by the universality of algebraic cobordism.

\subsection{Morava with finite coefficients}
\label{morava}

 Let $\mathrm{log}_\Omega(t)$ denote the logarithm of the universal formal group law over $\mathbb L\otimes\mathbb Q$. The identity
\begin{align}
\label{mish}
\mathrm{log}_\Omega(t)=\sum_{i=1}^\infty\cfrac{[\mathbb P^{\,i-1}]}{i}\,t^{i}
\end{align}
is known as the Mishchenko formula~\cite[Theorem~1]{Sh}. 

For an oriented cohomology theory $A^*$ let us denote $[\mathbb P^n]_A=\chi_A(1_{A^*(\mathbb P^n)})$. Then combining~(\ref{mish}) with~(\ref{loggr}) we have
$$
[\mathbb P^{i\,}]_{\mathrm K(n)}=
\begin{cases}
2^{(n-1)k}\,v_n^{\frac{2^{nk}-1}{2^n-1}},&i=2^{nk}-1,\\
0,&i\neq 2^{nk}-1.
\end{cases}
$$

In particular, the assumption $n\geq2$ guaranties that $[\mathbb P^{i\,}]_{\mathrm K(n)}\equiv 0\mod2$ for $i>0$. Moreover,  it is not hard to check that 
$$
[2]_{\mathrm K(n)}(t)\equiv v_nt^{2^n}\!\mod 2
$$
(cf.~\cite[A2.2.4]{Rav}). Thus, Propositions~\ref{table} and~\ref{euler} imply the following

\begin{prop}
\label{mod2}
Consider the free theory $A^*=\mathrm K(n)^*\big(-;\,\mathbb F_2[v_n^{\pm1}]\big)$ with $n\geq1$. Then for a smooth projective split quadric $Q$ of dimension $D=2d+1$ or $D=2d+2$, the ring  $\mathrm K(n)^*\big(Q;\,\mathbb F_2[v_n^{\pm1}]\big)$ is a free $\mathbb F_2[v_n^{\pm1}]$-module with the basis $l^{\mathrm K(n)}_i$ and $(h_{\mathrm K(n)})^k$, $0\leq i,k\leq d$, defined in Proposition~\ref{table}. The multiplication table can be deduced from the identities
\begin{align}
h_{\mathrm K(n)}\cdot l^{\mathrm K(n)}_i&=\begin{cases}l_{i-1}^{\mathrm K(n)},&i>0,\\0,&i=0,\end{cases}\label{hlkn}\\
l_i^{\mathrm K(n)}\cdot l_j^{\mathrm K(n)}&=\begin{cases}l_0^{\mathrm K(n)},&i=j=d,\,\text{ and }\ D\equiv0\,\ \mathrm{mod}\ 4,\\0,&\text{otherwise},\end{cases}\label{llkn}\\
h_{\mathrm K(n)}^{d+1}&=\begin{cases}v_n\,l^{\mathrm K(n)}_{D-d-2^n},&D\geq2^{n+1}-1,\\0,&D<2^{n+1}-1.\end{cases}\label{hhkn}
\end{align}
For $n>1$ the pushforward along the structure morphism is described by
\begin{align}
\chi_{\mathrm K(n)}(l_0^{\mathrm K(n)})&=1,\label{el0kn}\\
\chi_{\mathrm K(n)}(l_i^{\mathrm K(n)})&=0,\ \ i>0,\label{elkn}\\
\chi_{\mathrm K(n)}(h^k_{\mathrm K(n)})&=0,\ \ k\neq D+1-2^n,\label{ehkn}\\
\chi_{\mathrm K(n)}(h^{D+1-2^n}_{\mathrm K(n)})&=v_n,\ \ \text{if}\ D\geq2^n-1.\label{ehD'kn}
\end{align}
\end{prop}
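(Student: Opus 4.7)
The plan is to specialize Propositions~\ref{table} and~\ref{euler} along the change of coefficients $\Omega^* \to \mathrm K(n)^*(-;\mathbb F_2)$, which at the level of $\mathbb L$ factors through $\mathbb L \to \mathbb Z_{(2)}[v_n^{\pm 1}] \to \mathbb F_2[v_n^{\pm 1}]$, sending $v_k \mapsto 0$ for $k \neq n$ and reducing mod~$2$. Since $Q$ is a split cellular variety, $\Omega^*(Q)$ is free over $\mathbb L$ with basis $\{l_i, h^k\}_{0 \le i, k \le d}$, and this basis descends to an $\mathbb F_2[v_n^{\pm 1}]$-basis of $\mathrm K(n)^*(Q;\mathbb F_2)$ after tensoring; this establishes the freeness claim.

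Identities (\ref{hlkn}) and (\ref{llkn}) are then immediate specializations of the integer-coefficient identities (\ref{hl}) and (\ref{ll}). For (\ref{hhkn}) the key input is the stated congruence $[2]_F(t) \equiv v_n\, t^{2^n} \pmod{2}$, so that among the coefficients $b_i$ of $[2]_\Omega$ only $b_{2^n}$ survives modulo~$2$, with value $v_n$. Substituting into (\ref{hh}), the sum collapses to the single term $v_n\, l_{D-d-2^n}$, which exists exactly when $2^n \le D - d$; a case split on the parity of $D$ shows this is equivalent to $D \ge 2^{n+1} - 1$.

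For the pushforward, combine (\ref{el}) and (\ref{eh}) with the explicit form of $[\mathbb P^i]_{\mathrm K(n)}$ extracted from Mishchenko's formula (\ref{mish}) and the logarithm (\ref{loggr}), which yields $[\mathbb P^i]_{\mathrm K(n)} = 2^{(n-1)k}\, v_n^{(2^{nk}-1)/(2^n-1)}$ for $i = 2^{nk}-1$, and zero otherwise. Under the standing hypothesis $n \ge 2$, the prefactor $2^{(n-1)k}$ kills every contribution with $k \ge 1$ modulo~$2$, so only $[\mathbb P^0] = 1$ survives. This immediately yields (\ref{el0kn}) and (\ref{elkn}); in (\ref{eh}) the only pair $(j,\, D+1-k-j)$ contributing modulo~$2$ is $j = 2^n$ together with $D+1-k-j = 0$, equivalent to $k = D + 1 - 2^n$, and the resulting coefficient is $b_{2^n} \equiv v_n$, proving (\ref{ehkn}) and (\ref{ehD'kn}).

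No step presents a genuine obstacle; the argument amounts to mod-$2$ bookkeeping on top of Propositions~\ref{table} and~\ref{euler}. The only delicate points are verifying that the threshold $D \ge 2^{n+1}-1$ correctly captures both parity cases for $D$ in (\ref{hhkn}), and recognizing the hypothesis $n \ge 2$ as the precise reason why all higher classes $[\mathbb P^i]_{\mathrm K(n)}$ vanish modulo~$2$, the mechanism collapsing (\ref{el}) and (\ref{eh}) to their announced simple forms.
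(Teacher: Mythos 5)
Your proposal is correct and follows essentially the same route as the paper: reduce Propositions~\ref{table} and~\ref{euler} along $\mathbb L\to\mathbb F_2[v_n^{\pm1}]$, use the Mishchenko formula~(\ref{mish}) to compute $[\mathbb P^i]_{\mathrm K(n)}$ and observe that $n\geq 2$ kills all positive-dimensional ones modulo~$2$, and use $[2]_F(t)\equiv v_nt^{2^n}\pmod 2$ to collapse~(\ref{hh}) and~(\ref{eh}) to single terms. Your accounting of the threshold $D\geq 2^{n+1}-1$ and of the unique surviving pair $(j,D+1-k-j)=(2^n,0)$ in~(\ref{eh}) matches what the paper leaves implicit.
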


\section{Stabilization of the Morava K-theory for special orthogonal groups}
\label{vitya}

In the present section we prove that $\mathrm K(n)^*\big(\mathrm{SO}_m;\,\mathbb F_2[v_n^{\pm1}]\big)$ stabilizes for $m$ large enough. 
We will use in the proof the multiplication table for the Morava K-theory of a split quadric from Proposition~\ref{mod2} (see Subsection~\ref{bisc-section}).

\subsection{The plan of the proof}
\label{potp}

In the present section and in Sections~\ref{smallso}--\ref{nikita} we only work with the Morava K-theory with coefficients in $\mathbb F_2[v_n^{\pm1}]$ and, therefore, write $$\mathrm K(n)^*=\mathrm K(n)^*\big(-;\,\mathbb F_2[v_n^{\pm1}]\big)$$ for shortness. We are interested in the structure of $\mathrm K(n)^*(\mathrm{SO}_m)$ for a group variety $\mathrm{SO}_m$. 
We will prove the following
\begin{tm}
\label{stab}
For $m\geqslant 2^{n+1}+1$ the pullback map along the natural closed embedding induces an isomorphism of rings
$$
\mathrm K(n)^*(\mathrm{SO}_m)\cong\mathrm K(n)^*(\mathrm{SO}_{m-2}).
$$
\end{tm}

Recall that for a split reductive group $G$ with a split maximal torus $T$ the sequence
\begin{align*}
\mathrm K(n)^*(\mathrm BT)\rightarrow\mathrm K(n)^*(G/B)\rightarrow\mathrm K(n)^*(G)\rightarrow\mathbb F_2[v_n^{\pm1}]
\end{align*}
is a right exact sequence of augmented $\mathbb F_2[v_n^{\pm1}]$-algebras (see Subsection~\ref{characteristic}), where the first arrow is the characteristic map and the second one is the pullback along $G\rightarrow G/B$. 

On the one hand, $\mathrm K(n)^*(G/B)$ is a free $\mathbb F_2[v_n^{\pm1}]$-module with a basis given by resolutions of singularities $\zeta_w$ of Schubert varieties $\overline{BwB/B}$ for $w$ in the Weyl group $W$ of $G$, see Subsection~\ref{schcal}, and we have an explicit description of $\mathrm K(n)^*(\mathrm BT)$ as a ring $\mathbb F_2[v_n^{\pm1}][\![x_1,\ldots,x_l]\!]$. 
On the other hand, the characteristic map $$\mathfrak c_{\mathrm K(n)}\colon\mathrm K(n)^*(\mathrm BT)\rightarrow\mathrm K(n)^*(G/B)$$ has a rather complicated form after these identifications, cf.~Subsection~\ref{BGGD}. 

Instead of working with $\mathfrak c$ directly we will deduce
Theorem~\ref{stab} from Propositions~\ref{bis} and \ref{bisc} below. The proof of Proposition~\ref{bis} will follow the strategy of \cite[Lemma~6.2]{PShopf} for part a) and \cite{PSadd} for part b). The proof of Proposition~\ref{bisc} will be based on the divided difference operators $\Delta_i$ (see Subsection~\ref{BGGD}).

\begin{prop}
\label{bis}
a{\rm)}\,For any free theory $A^*$ and any parabolic subgroup $P$ in a split semisimple group $G$ the pullback map along the closed embedding induces an isomorphism of rings
$$
A^*(P)\cong A^*(G)\otimes_{A^*(G/P)}A^*(\mathrm{pt}).
$$
b{\rm)}\,For the Levi part $L$ of a parabolic subgroup $P$ and $C=[L,\,L]$ the pullback maps along the natural closed embeddings induce isomorphisms of rings
$$
A^*(P)\cong A^*(L)\cong A^*(C).
$$
\end{prop}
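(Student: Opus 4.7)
The plan for part~(a) is to observe that the asserted isomorphism $A^*(P) \cong A^*(G) \otimes_{A^*(G/P)} A^*(\mathrm{pt})$ is equivalent to the right exactness of the sequence of augmented $A^*(\mathrm{pt})$-algebras
\[A^*(G/P) \to A^*(G) \to A^*(P) \to A^*(\mathrm{pt}),\]
where the first arrow is the pullback along $\pi\colon G \to G/P$ and the second is the pullback along the fiber inclusion $P \hookrightarrow G$. I would deduce this right exactness by extending~\cite{GZ}, Proposition~5.1 (which gives the statement for the Borel $B$, cf.\ (\ref{Kr12})) from Borels to arbitrary parabolics. The argument there uses only that $G/B$ is cellular and that $G \to G/B$ is Zariski-locally trivial as a principal $B$-bundle; both facts remain valid with $P$ in place of $B$. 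Concretely, pulling back a cellular filtration of $G/P$ along $\pi$ yields a filtration of $G$ whose successive quotients are of the form $\mathbb{A}^{d_i} \times P$, and combining the localization sequences with homotopy invariance identifies the kernel of $A^*(G) \to A^*(P)$ with the ideal generated by the augmentation ideal of $A^*(G/P)$.

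For the first isomorphism in part~(b), the plan is to invoke the Levi decomposition together with homotopy invariance. As varieties $P \cong L \times U$, and since the characteristic is zero, the unipotent radical $U$ is isomorphic to $\mathbb{A}^{\dim U}$. The projection $P \to L$ is thus a trivial affine bundle, so its pullback $A^*(L) \to A^*(P)$ is an isomorphism. Since the closed embedding $L \hookrightarrow P$ is a section of this projection, the pullback along the embedding is the inverse, yielding the claimed ring isomorphism.

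For the second isomorphism, the strategy is to realise $L$ as a direct product of varieties $C \times T'$, with $T' = L/C$ a split torus, and then to invoke that split tori are invisible to $A^*$. Choosing a maximal torus $T \subset L$, the subtorus $T_C := T \cap C$ is a split subtorus; since any exact sequence of split tori splits, there is a complementary subtorus $T'' \subset T$ with $T'' \xrightarrow{\sim} T'$. The morphism $C \times T' \to L,\; (c,\, t') \mapsto c \cdot s(t')$, with $s \colon T' \xrightarrow{\sim} T'' \hookrightarrow L$, is a bijection with algebraic inverse $l \mapsto (l \cdot s(\pi(l))^{-1},\, \pi(l))$, hence an isomorphism of varieties. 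One then uses the identity $A^*(Y \times \mathbb{G}_m) \cong A^*(Y)$, valid for any smooth $Y$ and any free $A^*$: it follows from the localization sequence for $Y \times \mathbb{A}^1 \supset Y \times \mathbb{G}_m$, whose boundary map vanishes because the zero section is a principal divisor on $\mathbb{A}^1$. Iterating gives $A^*(L) \cong A^*(C \times T') \cong A^*(C)$.

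The main obstacle will be part~(a): extending \cite{GZ}, Proposition~5.1 from Borels to arbitrary parabolics is not conceptually new, but requires careful bookkeeping of the cellular filtrations and the compatible localization sequences. A subtle point in part~(b) is the choice of the complement $T''$; one cannot simply take $Z(L)^0$, since the intersection $Z(L)^0 \cap C$ is in general a non-trivial finite central subgroup, so the central torus does not map isomorphically to $L/C$.
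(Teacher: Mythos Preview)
Your approach is correct but follows a genuinely different route from the paper in both parts.

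For part~(a), the paper does \emph{not} run the localization argument directly on $G\to G/P$. Instead it routes everything through the full flag variety: it first establishes a Leray--Hirsch type decomposition $A^*(G/B)\cong A^*(G/P)\otimes_{A^*(\mathrm{pt})}A^*(P/B)$ as $A^*(G/P)$-modules (via graded Nakayama from the known Chow case), tensors with $A^*(\mathrm{pt})$ over $A^*(G/P)$ to get $A^*(\mathrm{pt})\otimes_{A^*(G/P)}A^*(G/B)\cong A^*(P/B)$, and then tensors once more with $A^*(\mathrm{pt})$ over $A^*(\mathrm BT)$, invoking the already-known exact sequence~(\ref{Kr12}) for $G$ and for $L$ to pass from $G/B$ to $G$ and from $P/B=L/B$ to $L$. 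The point is that the fibre of $G/B\to G/P$ is projective and cellular, so Leray--Hirsch is straightforward; by contrast, in your direct approach the fibre $P$ has $A^*(P)\neq A^*(\mathrm{pt})$, and in the inductive localization step one must still argue that classes pushed forward from $\pi^{-1}(Z)$ lie in the ideal generated by $\pi^*(A^*(G/P)^+)$ --- this is exactly where the argument for $B$ (which tacitly uses $A^*(B)=A^*(\mathrm{pt})$) does not transfer verbatim. It can be closed (e.g.\ using that $G$ acts transitively on $G/P$, so restriction to any fibre gives the same map on $A^*$), but this is an extra ingredient beyond ``cellular base $+$ local triviality''.

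For the second isomorphism in part~(b), the paper again argues algebraically: it uses the commutative square of characteristic maps for $L$ and $C$, together with $L/B=C/B'$ and the surjection $A^*(\mathrm BT)\twoheadrightarrow A^*(\mathrm BT')$, to conclude $A^*(L)\cong A^*(C)$ from the exact sequences~(\ref{Kr12}). Your argument via the variety splitting $L\cong C\times T'$ and the vanishing $A^*(Y\times\mathbb G_m)\cong A^*(Y)$ is more geometric and entirely self-contained; it avoids the characteristic map altogether. Both approaches are valid; yours is perhaps more elementary, while the paper's keeps everything within the framework of~(\ref{Kr12}) that is being used throughout the section anyway.
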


\begin{prop}
\label{bisc}
Let $m\geqslant 2^{n+1}+1$, and let $Q$ denote a split projective quadric of dimension $m-2$. Then the natural map 
$$
\mathrm K(n)^*(Q)\rightarrow\mathrm K(n)^*(\mathrm{SO}_m)
$$
factors through $\mathrm K(n)^*(\mathrm{pt})$.
\end{prop} 

The rest of this section is devoted to the proofs of the above propositions and is organized as follows. We start with an analogue of the Leray--Hirsch Theorem for free theories. Next, we use it to prove Proposition~\ref{bis}, and after that we investigate the divided difference operators and prove Proposition~\ref{bisc} and, finally, deduce Theorem~\ref{stab}.

\subsection{The Leray--Hirsch Theorem}

The required isomorphism 
\begin{align}
\label{DoZh}
A^*(P)\cong A^*(G)\otimes_{A^*(G/P)}A^*(\mathrm{pt})
\end{align}
of Proposition~\ref{bis}\,a{\rm)} can be deduced from the $A^*(G/P)$-module isomorphism
$$
A^*(G/B)\cong A^*(G/P)\otimes_{A^*(\mathrm{pt})}A^*(P/B)
$$
obtained in~\cite{DoZh}, see also~\cite{EG,DT}. However, this argument does not prove that~(\ref{DoZh}) is an isomorphism {\it of rings}, not just of $A^*(\mathrm{pt})$-modules. To fix this, we will show that~(\ref{DoZh}) is actually induced by a pullback map. 

In the notation of Proposition~\ref{bis} let us denote by $B'$ the Borel subgroup of $C$, i.e., $B'=B\cap C$. Observe that $P/B$ and $C/B'$ coincide as varieties and the ring $\mathrm{CH}^*(P/B)$ has a free basis consisting of the classes $Z'_w$ of closures of Bruhat cells $(B')^-wB'/B'$ in $C/B'$, or, equivalently, of closures of $(B')^-wB/B$ in $P/B$, $w\in W_P$.  We introduce the following notation.

\begin{nt}[Subring $R$ and subgroup $V$]
\label{rv}
Let us denote by $R$ a free abelian subgroup of $\mathrm{CH}^*(G/B)$ with the basis $Z_w$, $w\in W^P$, and by $V$ a free abelian subgroup with the basis $Z_w$, $w\in W_P$ (cf. Subsection~\ref{schcal}). We have the following isomorphisms of abelian groups:
$$
R\cong\mathrm{CH}^*(G/P),\qquad V\cong\mathrm{CH}^*(P/B),
$$
moreover, the pullback map $\pi^{\mathrm{CH}}\colon\mathrm{CH}^*(G/P)\rightarrow\mathrm{CH}^*(G/B)$ is injective, and $R=\mathrm{Im}(\pi^{\mathrm{CH}})$, i.e., $R$ is a subring of $\mathrm{CH}^*(G/B)$ isomorphic to $\mathrm{CH}^*(G/P)$.
\end{nt}

The next lemma is just~\cite[Proposition~1]{EG} applied to our situation. We remark that $U_P^-\cdot P/B$ is an open subvariety of $G/B$, and the multiplication $$U_P^-\times (P/B)\rightarrow (U_P^-\cdot P)/B$$ defines an isomorphism of schemes.
\begin{lm}
\label{tym}
\label{clemb}
a{\rm)} The map
$
R\,\otimes_{\mathbb Z}V\rightarrow\mathrm{CH}^*(G/B)
$
sending $p\otimes q$ to $pq$ is a bijection.\\
b{\rm)} The isomorphism of abelian groups $\mathrm{CH}^*(P/B)\cong V$ identifying $Z'_w$ with $Z_w$, ${w\in W_P}$, is a section of the pullback map along the closed embedding ${\iota\colon P/B\rightarrow G/B}$.
\end{lm}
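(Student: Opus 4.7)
The plan is to exploit the fiber bundle structure $\pi\colon G/B\to G/P$ with fiber $P/B$, which is Zariski-locally trivial and, over the big open cell $U_P^-P/P\subset G/P$, admits the explicit trivialization $U_P^-\cdot P/B\cong U_P^-\times P/B$. Under this trivialization the closed embedding $\iota\colon P/B\hookrightarrow G/B$ factors as the zero section $\{e\}\times P/B\hookrightarrow U_P^-\times P/B$ followed by the open inclusion.

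I would first establish part (b). Since $U_P^-$ is an affine space, the pullback along $U_P^-\times P/B\to P/B$ is an isomorphism on Chow, and its composition with the open pullback $\mathrm{CH}^*(G/B)\to\mathrm{CH}^*(U_P^-\cdot P/B)$ agrees with $\iota^{\mathrm{CH}}$. For $w\in W_P$, the Bruhat cell $B^-wB/B$ meets the trivialized open piece in the product $U_P^-\times(B')^-wB'/B'$, whence $\overline{B^-wB/B}\cap(U_P^-\cdot P/B)=U_P^-\times\overline{(B')^-wB'/B'}$; pulling back via the zero section is transverse and yields precisely $Z'_w$. Therefore $\iota^{\mathrm{CH}}(Z_w)=Z'_w$ for every $w\in W_P$, so the map sending $Z'_w\mapsto Z_w$ is a well-defined section of $\iota^{\mathrm{CH}}$.

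For part (a), having (b) in hand I would invoke the Leray--Hirsch principle in the form of \cite[Proposition~1]{EG}: since the chosen lifts $\{Z_w\}_{w\in W_P}\subset\mathrm{CH}^*(G/B)$ of the Schubert basis $\{Z'_w\}_{w\in W_P}$ of $\mathrm{CH}^*(P/B)$ restrict to a $\mathbb{Z}$-basis in every fiber of the Zariski-locally trivial fibration $\pi$ over the cellular base $G/P$, the multiplication map $\pi^{\mathrm{CH}}\mathrm{CH}^*(G/P)\otimes_{\mathbb{Z}}V\to\mathrm{CH}^*(G/B)$ is an isomorphism of abelian groups. Since $\pi^{\mathrm{CH}}$ is injective with image $R$, this is exactly the bijection claimed in (a). As a sanity check the ranks match termwise: in codimension $k$ the left side counts pairs $(v,w)\in W^P\times W_P$ with $l(v)+l(w)=k$, which by the Bruhat factorization $W\cong W^P\times W_P$ with $l(vw)=l(v)+l(w)$ equals $|\{u\in W:l(u)=k\}|$.

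The main obstacle will be the stratum-compatibility claim $\overline{B^-wB/B}\cap(U_P^-\cdot P/B)=U_P^-\times\overline{(B')^-wB'/B'}$ for $w\in W_P$. This reduces to checking that the Bruhat stratification on the fiber $P/B\cong C/B'$ is cut out from the $B^-$-stratification on $G/B$ along the zero section, which uses the decomposition $B^-=U_P^-\cdot(B^-\cap L)$ together with the fact that $(B')^-$ and $B^-\cap L$ induce the same orbit structure on $C/B'$. Once this local product picture is in place, both parts follow immediately.
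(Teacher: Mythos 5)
Your proof is correct and takes essentially the same route as the paper: for (b) you decompose $\iota$ through the big open cell $(U_P^-\cdot P)/B\cong U_P^-\times P/B$ followed by the zero-section pullback, exactly as in the paper, and for (a) you invoke the Leray--Hirsch principle of Edidin--Graham, which is precisely what the paper's citation of \cite[Lemma~6.2]{PShopf} (itself an application of \cite[Proposition~1]{EG}) amounts to.
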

\begin{proof}
For a{\rm)}, see~\cite[Lemma~6.2]{PShopf}. 
For b{\rm)}, decompose $\iota$ as a composition of a closed embedding $i\colon P/B\rightarrow (U_P^-\cdot P)/B$ followed by an open embedding $j\colon (U_P^-\cdot P)/B\hookrightarrow G/B$. 
The pullback $j^*(Z_w)$ is given by  $Z_w\cap (U_P^-\cdot P)/B$, and the latter coincides with $U_P^-\times\overline{(B')^-wB/B}$ as a subvariety, since both schemes are reduced. 

Next, $i$ is the zero section of the trivial fibration $U_P^-\cdot P/B\rightarrow P/B$ and, therefore, sends $U_P^-\times\overline{(B')^-wB/B}$ to $Z'_w$.
\end{proof}

Now we will prove an analogue of Lemma~\ref{tym} for any free theory. As a matter of fact, we deduce it from the Chow case with the use of the graded Nakayama Lemma. 

\begin{lm}[Graded Nakayama's Lemma]
\label{surj}
Let $M$ and $N$ be graded $\mathbb L$-modules, let $N$ be finitely generated, and let $f\colon M\rightarrow N$ be a homomorphism of $\mathbb L$-modules which preserves grading. Then\\
a{\rm)} $\mathbb L^{<0}N=N\Rightarrow N=0${\rm;}\\
b{\rm)} if $f\otimes_{\mathbb L}\mathbb Z\colon M/\mathbb L^{<0}M\rightarrow N/\mathbb L^{<0}N$ is surjective, then $f$ is surjective as well.
\end{lm}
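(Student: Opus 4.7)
The plan is to adapt the standard Nakayama argument to a graded ring concentrated in non-positive degrees. Recall from the proof of Proposition~\ref{table} that in the paper's grading convention $\mathbb L$ sits in non-positive degrees (with $\mathbb L^0=\mathbb Z$), so that $\mathbb L^{<0}$ is a homogeneous ideal with $\mathbb L/\mathbb L^{<0}=\mathbb Z$. The key observation is that multiplication by a homogeneous element of $\mathbb L^{<0}$ strictly \emph{decreases} cohomological degree.

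For part a), I would pick a finite set of homogeneous generators $n_1,\ldots,n_r$ of $N$ and set $d=\max_i\deg n_i$. Since $\mathbb L$ lives in non-positive degrees, every homogeneous element of $N$ has degree at most $d$, so the set of degrees in which $N$ is nonzero is bounded above. Suppose $N\neq 0$ and let $d'$ be the largest such degree. Take any nonzero homogeneous $n\in N_{d'}$. The assumption $N=\mathbb L^{<0}N$ allows us to write $n=\sum_i\ell_i m_i$ with $\ell_i\in\mathbb L^{<0}$ homogeneous and $m_i\in N$ homogeneous, and then $\deg m_i=d'-\deg\ell_i>d'$, contradicting the maximality of $d'$. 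Hence $N=0$.

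For part b), I would apply part a) to the cokernel $C=N/f(M)$, which is a graded $\mathbb L$-module and is finitely generated as a quotient of the finitely generated module $N$. The surjectivity of $f\otimes\mathbb Z\colon M/\mathbb L^{<0}M\to N/\mathbb L^{<0}N$ means precisely that $C/\mathbb L^{<0}C=0$, i.e., $\mathbb L^{<0}C=C$. By part a) we conclude $C=0$, so $f$ is surjective.

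The argument is almost entirely formal, and I do not expect a serious obstacle; the only point that requires care is keeping track of the sign convention for the grading on $\mathbb L$ (so that ``multiplication by $\mathbb L^{<0}$ decreases degree'' is the right statement and the induction on the highest degree terminates). Once this is fixed, both parts are immediate, and the statement can then be used freely to transport results about Chow groups to statements about arbitrary free theories as indicated in Section~\ref{potp}.
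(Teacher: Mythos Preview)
Your argument is correct and is exactly the standard proof of the graded Nakayama lemma: bound the degrees from above using a finite set of homogeneous generators, then observe that multiplication by $\mathbb L^{<0}$ strictly lowers degree to get a contradiction, and for b) apply a) to the cokernel via right-exactness of $-\otimes_{\mathbb L}\mathbb Z$. The paper in fact states Lemma~\ref{surj} without proof, treating it as a standard fact, so there is nothing to compare; your write-up supplies precisely the expected justification.
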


\begin{nt}[Subring $R^A$ and free submodule $V^A$]
For an oriented cohomology theory $A^*$ we denote by $R^A$ the image of $A^*(G/P)$ in $A^*(G/B)$ under the pullback map and by $V^A$ the (free graded) $A^*(\mathrm{pt})$-submodule of $A^*(G/B)$ generated by $\zeta^A_{w_0w}$, $w\in W_P$, cf. Subsection~\ref{schcal}.
\end{nt}

\begin{lm}
\label{ck}
a{\rm)} The map
$
f\colon R^A\otimes_{A^*(\mathrm{pt})}V^A\rightarrow A^*(G/B)
$
which sends $p\otimes q$ to  $pq$ is surjective.\\
b{\rm)} The pullback map along the closed embedding
$$
\iota^A\colon A^*(G/B)\rightarrow A^*(P/B)
$$
restricted to $V^A$ defines an isomorphism $V^A\cong A^*(P/B)$.
\end{lm}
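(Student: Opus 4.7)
The plan is to reduce both parts to the corresponding Chow statements in Lemma~\ref{tym} via graded Nakayama (Lemma~\ref{surj}). I would first treat the universal case $A^* = \Omega^*$, and then extend to arbitrary free theory by the base change identity $A^*(X) = \Omega^*(X) \otimes_{\mathbb L} A^*(\mathrm{pt})$, which holds for $X \in \{G/B,\,G/P,\,P/B = C/B'\}$ since each of these has a split cobordism motive. Under this identification $R^A = R^\Omega \otimes_{\mathbb L} A^*(\mathrm{pt})$ and $V^A = V^\Omega \otimes_{\mathbb L} A^*(\mathrm{pt})$ (compatibly with the pullbacks defining them), so surjectivity and bijectivity of $f$ and of $\iota^A|_{V^A}$ propagate from the $\Omega$-case.

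For part a), the map $f^\Omega\colon R^\Omega \otimes_{\mathbb L} V^\Omega \to \Omega^*(G/B)$ is a homomorphism of finitely generated graded $\mathbb L$-modules: the target is $\mathbb L$-free of rank $|W|$ on the $\zeta^\Omega_w$, and the source is $\mathbb L$-free of rank $|W^P|\cdot|W_P| = |W|$. Reducing modulo $\mathbb L^{<0}$ and using that $\zeta^\Omega_w \bmod \mathbb L^{<0} = X_w$ (Calmès--Petrov--Zainoulline), the reduced map is precisely the Chow map $R \otimes_{\mathbb Z} V \to \mathrm{CH}^*(G/B)$ of Lemma~\ref{tym}a), which is a bijection. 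Lemma~\ref{surj}b) then lifts surjectivity to $f^\Omega$, and base change yields $f^A$ surjective.

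For part b), I apply the same strategy to $\iota^\Omega|_{V^\Omega}\colon V^\Omega \to \Omega^*(P/B)$. Both source and target are $\mathbb L$-free of rank $|W_P|$, with matching Hilbert series: $\zeta^\Omega_{w_0 w}$ has codimension $l(w_0) - l(w_0 w) = l(w)$ for $w \in W_P$, which equals the codimension of $\zeta^\Omega_w \in \Omega^*(C/B') = \Omega^*(P/B)$. Nakayama applied to the Chow bijection of Lemma~\ref{tym}b) gives surjectivity of $\iota^\Omega|_{V^\Omega}$. For injectivity, let $K$ be its kernel; since $\Omega^*(P/B)$ is $\mathbb L$-free, the short exact sequence $0 \to K \to V^\Omega \to \Omega^*(P/B) \to 0$ splits, so $K$ is finitely generated and reducing mod $\mathbb L^{<0}$ gives $K/\mathbb L^{<0} K = 0$ because the Chow reduction is injective. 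Lemma~\ref{surj}a) then forces $K = 0$. Base change to arbitrary $A^*$ preserves both surjectivity and injectivity.

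The main technical obstacle is justifying the base-change identifications of $R^A$ and $V^A$ compatibly with the pullbacks along $\pi$ and $\iota$; this is standard but needs the naturality of the comparison map $\Omega^* \otimes_{\mathbb L} A^*(\mathrm{pt}) \to A^*$ applied to both $G/B$ and its closed/flag subvarieties, together with the fact that the splitting of $V^\Omega \hookrightarrow \Omega^*(G/B)$ is $\mathbb L$-linear.
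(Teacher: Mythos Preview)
Your proposal is correct and follows essentially the same route as the paper: reduce to $\Omega^*$, apply graded Nakayama (Lemma~\ref{surj}) against the Chow case (Lemma~\ref{tym}), and pass to general $A^*$ by base change. The only difference is that the paper obtains injectivity in b) in one line by noting that a surjection between free $A^*(\mathrm{pt})$-modules of the same finite rank is automatically an isomorphism, whereas you argue via the split kernel and Nakayama.
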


\begin{proof}
Since $V^A$ and $A^*(P/B)$ are free $A^*(\mathrm{pt})$-modules of the same rank, it is sufficient to prove that $\iota^A|_{V^A}$ is surjective to get b{\rm)}. 
Therefore, we can assume that $A^*=\Omega^*$. Since the $\mathbb L$-module $M=R^\Omega\otimes_{\mathbb L}V^\Omega$ is graded, $f$ preserves grading, and $f\otimes_{\mathbb L}\mathbb Z$ is surjective by Lemma~\ref{tym}, we can apply Lemma~\ref{surj} to get a{\rm)}. Similarly, we can apply Lemma~\ref{surj} to $M=V^\Omega$, $f=\iota^\Omega|_{V^\Omega}$ to get b{\rm)}.
\end{proof}

\subsection{Proof of Proposition~\ref{bis}}
\label{charmap}

First, we use the argument of~\cite[Lemma~2.2]{PSadd} to prove Proposition~\ref{bis}\,b{\rm)}, i.e., to show that 
the natural pullback maps define isomorphisms $$A^*(P)\cong A^*(L)\cong A^*(C).$$
\begin{proof}[Proof of Proposition~\ref{bis}~b{\rm)}]
Since $P\rightarrow L$ is an affine fibration, and the inclusion $L\hookrightarrow P$ is its zero section, we have $A^*(P)\cong A^*(L)$. Next, for $T'=T\cap C$ and $B'=B\cap C$ consider the diagram
$$
\xymatrix{
A^*(\mathrm{B}T)\ar@{->>}[d]\ar[r]^{\!\!\mathfrak c}&\ar@{=}[d]\ar@{->>}[r]A^*(L/B)&A^*(L)\ar[d]\\
A^*\big(\mathrm{B}{T'}\big)\ar[r]^{\!\!\!\!\mathfrak c}&\ar@{->>}[r]A^*(C/B')&A^*(C),
}
$$
and use that the rows are exact.
\end{proof}

Next, we prove the part a) of Proposition~\ref{bis}, i.e., establish an isomorphism of rings
$$
A^*(P)\cong A^*(G)\otimes_{A^*(G/P)}A^*(\mathrm{pt}).
$$
The argument is essentially the same as in~\cite[Lemma~6.2]{PShopf}.

\begin{proof}[Proof of Proposition~\ref{bis}~a{\rm)}]
First, we consider the surjective map from Lemma~\ref{ck}\,a{\rm)} followed by the pullback map: 
$$
R^A\otimes_{A^*(\mathrm{pt})}V^A\twoheadrightarrow A^*(G/B)\rightarrow A^*(P/B),
$$
and tensor this sequence with $A^*(\mathrm{pt})$ over $A^*(G/P)$. We obviously have 
$$
A^*(\mathrm{pt})\otimes_{A^*(G/P)}R^A\otimes_{A^*(\mathrm{pt})}V^A\cong V^A,
$$
and since $P/B\rightarrow G/B\rightarrow G/P$ factors through a point, 
$$
A^*(\mathrm{pt})\otimes_{A^*(G/P)}A^*(P/B)\cong A^*(P/B).
$$ 
Therefore, we obtain the sequence
$$
V^A\twoheadrightarrow A^*(\mathrm{pt})\otimes_{A^*(G/P)}A^*(G/B)\rightarrow A^*(P/B)
$$
and the composite map is an isomorphism by Lemma~\ref{ck}\,b{\rm)}.

Now, we use $A^*(P/B)\cong A^*(L/B)$ and tensor the above sequence (of isomorphisms) with $A^*(\mathrm{pt})$ over $A^*(\mathrm BT)$. We have
$$
A^*(G/B)\otimes_{A^*(\mathrm BT)}A^*(\mathrm{pt})=A^*(G),
$$ 
and similarly $A^*(L/B)\otimes_{A^*(\mathrm BT)}A^*(\mathrm{pt})=A^*(L)$. Since the characteristic map commutes with pullbacks, we can conclude that the isomorphism 
$
{A^*(L)\cong A^*(G)\otimes_{A^*(G/P)}A^*(\mathrm{pt})}
$ 
is actually induced by the pullback along the inclusion $L\rightarrow G$ and, therefore, preserves the ring structure. We finish the proof with the use of Theorem~\ref{bis}~b{\rm)}.
\end{proof}

\subsection{BGG-Demazure operators and topological filtration}

Although the action of the BGG--Demazure operators $\Delta^\Omega_i$ on $\Omega^*(G/B)$ is quite complicated (see Subsection~\ref{BGGD}), they induce well-defined operators on the associated graded ring $\mathrm{gr}_\tau^*\Omega(G/B)^*$ of $\Omega^*(G/B)$ with respect to the topological filtration (see Subsection~\ref{filtration}). Moreover, the action of these induced operators can be described in terms of the action of the usual operators $\Delta^{\mathrm{CH}}_i$ on $\mathrm{CH}(G/B)$ (see Lemma~\ref{tech} below). This observation will be very helpful in the next subsection.

Recall that $\tau^1\Omega^*(G/B)=\mathrm{Ker}(\mathrm{deg}_\Omega)$ is an ideal in $\Omega^*(G/B)$ generated by the homogeneous elements of positive codimension, cf.~\cite[Theorem~1.2.14]{LM}, and $\tau^{s\,}\Omega^*(G/B)$ is an ideal generated by the homogeneous elements of codimension at least $s$, see Subsection~\ref{filtration}. Since the topological filtration respects the multiplication, clearly, $\tau^1\Omega^*(G/B)$ is nilpotent.

Below for a sequence of indices $I=(i_1,\ldots,i_k)$ we denote by $|I|=k$ its length, and for a homogeneous element $x\in\Omega^p(G/B)$ we denote by $|x|=p$ its codimension.
\begin{lm}
\label{tech}
The BGG--Demazure operators $\Delta^\Omega_I$ induce operators 
$$
\Delta^{\mathrm{gr}}_I\colon\mathrm{gr}_\tau^*\Omega^*(G/B)\rightarrow\mathrm{gr}_\tau^{*-|I|}\Omega^*(G/B), 
$$
and, moreover, the canonical map
$$
\rho\colon\mathrm{CH}^*(G/B)\otimes\mathbb L^*\rightarrow\mathrm{gr}_\tau^*\Omega^*(G/B)
$$
preserves the action of divided difference operators.
\end{lm}
\begin{proof}
We can assume that $I=\{i\}$. Then for a homogeneous $x\in\Omega^*(G/B)$ we have
$$
\Delta^\Omega_i(x)=-\pi_i^\Omega\circ(\pi_i)_{\Omega\,}(x)+\kappa_{i\,}x\in\tau^{|x|-1}\Omega^*(G/B),
$$
therefore, by the Leibniz rule~(\ref{leibniz}),  $\Delta^\Omega_i(ax)\in\tau^{|x|-1}\Omega^*(G/B)$ for every $a\in\Omega^*(G/B)$, i.e., 
$$
\Delta^\Omega_i\big(\tau^{p}\Omega^*(G/B)\big)\subseteq\tau^{p-1}\Omega^*(G/B).
$$
In particular, $\Delta^{\mathrm{gr}}_i$ is well-defined.

To obtain the second statement, take $x\in\mathrm{CH}^*(G/B)$ and let $\widetilde x$ be its lift in $\Omega^*(G/B)$. Recall that $\rho$ is $\mathbb L$-linear and commutes with pullbacks and pushforwards. Then $-\pi_i^\Omega\circ(\pi_i)_\Omega(\widetilde x)$ is a pre-image of $\Delta_i^{\mathrm{CH}}(x)$ in $\Omega^*(G/B)$, therefore $\rho$ sends $\Delta_i^{\mathrm{CH}}(x)$ to the class of this pre-image in $\mathrm{gr}_\tau^*\Omega^*(G/B)$. However, since $\kappa_i$ from~(\ref{sameBGGD}) in fact lies in $\mathbb L^{<0\,}\Omega(G/B)$, the classes of $-\pi_i^\Omega\circ(\pi_i)_\Omega(\widetilde x)$ and $\Delta_i^{\Omega}(\widetilde x)$ in $\mathrm{gr}_\tau^*\Omega^*(G/B)$ coincide.
\end{proof}

\subsection{Proof of Proposition~\ref{bisc}}
\label{bisc-section}

In the present subsection $n$ denotes a fixed natural number, $G=\mathrm{SO}_m$ with $m\geqslant 2^{n+1}+1$, and $P=P_1$ (see Subsection~\ref{schcal}). Then 
$$
Q=G/P
$$ 
is a smooth projective split quadric of dimension $m-2=2d$ or $m-2=2d+1$.

Let us briefly describe the plan of the proof realized below. We want to show that the natural map
$$
\mathrm K(n)^*(G/P)\rightarrow\mathrm K(n)^*(G)
$$
factors through $\mathrm K(n)^*(\mathrm{pt})$.

Let us denote $N=2^n$ for $m$ even and $N=2^n-1$ for $m$ odd. In our assumption $m\geqslant 2^{n+1}+1$, we have 
$N\leq d$. 
Recall from Proposition~\ref{mod2} that the ring $\mathrm K(n)^*(Q)$ is gene\-rated as an $\mathrm K(n)^*(\mathrm{pt})$-algebra by two elements: $h$ and $l=l_d$ connected by the following equation:
\begin{align}
\label{moravaformula}
h^{d+1}=v_n\,h^{N}l.
\end{align}
To obtain the claim it is sufficient to show that $h$ and $l$ are mapped to zero in $\mathrm K(n)^*(G)$.

Consider the natural map
$$
\pi\colon G/B\rightarrow G/P\cong Q,
$$
and let $\mathcal I=\mathcal I_{\mathrm K(n)}$ denote the kernel of the map
$$
\mathrm K(n)^*(G/B)\rightarrow\mathrm K(n)^*(G).
$$
Then the claim is equivalent to the fact that $H:=\pi^{\mathrm K(n)}(h)$ and $L:=\pi^{\mathrm K(n)}(l)$ lie in $\mathcal I$. However, $\mathcal I$ coincides with the ideal generated by (non-trivial) ``generically rational'' elements, i.e., the elements that lie in the image of the natural map
$$
\tau^1\mathrm K(n)^*(E/B)\rightarrow\mathrm K(n)^*(G/B),
$$
where $E\in\mathrm H^1(k,\,G)$ denotes the generic torsor (see Lemma~\ref{bt=egen} below for the precise statement). In particular, $H$ lies in $\mathcal I$, and it remains to show that $L$ also does.

With this end we apply $\pi$ to~(\ref{moravaformula}) and we claim that there exists a BGG-Demazure operator $\Delta_{I_0}^{\mathrm K(n)}$ such that $\Delta_{I_0}^{\mathrm K(n)}(H^{d+1})\in\mathcal I$ and $$\Delta_{I_0}^{\mathrm K(n)}(H^NL)\in a\cdot L+\mathcal I$$ for some invertible $a$.

More precisely, using the Leibniz rule~(\ref{leibniz}) we will show that
$$
\Delta_{I_0}^{\mathrm K(n)}(H^NL)\in \Delta_{I_0}^{\mathrm K(n)}(H^N)\cdot L+\mathcal I,
$$
and we will choose $I_0$ in such a way that $a=\Delta_{I_0}^{\mathrm K(n)}(H^N)$ is invertible.  This clearly implies that $L\in\mathcal I$.

Now we proceed with the proof of Proposition~\ref{bisc}.

For $A^*=\mathrm{CH}^*$ the pullback map along $\pi\colon G/B\rightarrow G/P$ can be described explicitly, 
in particular, 
$$
\pi^{\mathrm{CH}}(h^d)=
\begin{cases}
Z_{s_{d}\ldots s_1},&m=2d+3,\\
Z_{s_{d}\ldots s_1}+Z_{s_{d+1}s_{d-1}\ldots s_1},&m=2d+2;
\end{cases}
$$
and $\pi^{\mathrm{CH}}(h^k)=Z_{s_k\ldots s_1}$ for $0\leq k\leq d-1$. Indeed, in the notations of Section~\ref{schcal} the generator $h$ equals $Z_{s_1}$ and the pullback $\pi^{\CH}$ always maps $Z_w\in\CH^*(G/P)$ to $Z_w\in\CH^*(G/B)$ for every $w\in W^P$. Moreover, the Chow ring structure $\CH^*(G/P)$ is described in \cite[Propositions~68.1 and 68.2]{EKM} and the combinatorial structure of $W^P$ immediately follows from \cite[Example~9.2]{CGM}.

Recall that for $I=(i_1,\ldots,i_k)$ we denote by $\Delta^A_I=\Delta^A_{i_1}\circ\ldots\circ\Delta^A_{i_k}$ the composition of divided difference operators, and $\Delta^{\mathrm{CH}}_I=\Delta^{\mathrm{CH}}_w$ if $w=s_{i_1}\ldots s_{i_k}$ is a reduced decomposition (see Subsection~\ref{BGGD}). Using the duality of the Demazure operators and the Schubert basis~(\ref{bgg-schubert-duality}) we conclude that 
$$
\Delta^{\mathrm{CH}}_{I_0}\big(\pi^{\mathrm{CH}}(h^N)\big)=(-1)^NX_{w_0}=(-1)^N\in\mathrm{CH}^*(G/B)
$$
for $I_0=(1,2,\ldots,N)$.

\begin{rk}
In fact, we do not use explicit formulae for $\pi^{\mathrm{CH}}(h^N)$ and for $I_0$ in the subsequent arguments. We use only the existence of $I_0$ such that $\Delta^{\mathrm{CH}}_{I_0}\big(\pi^{\mathrm{CH}}(h^N)\big)$ is invertible.
\end{rk}

\begin{lm}
\label{invv}
In the above notation, for a free theory $A^*$ let us denote ${H_A=\pi^{A}(h)}$. Then the element $\Delta_{I_0}^A(H_A^{N})$ is invertible in $A^*(G/B)$.
\end{lm}
\begin{proof}
We can assume that $A^*=\Omega^*$. Since $\Delta_{I_0}^{\mathrm{CH}}(H_{\mathrm{CH}}^{N})=\pm1$, and $H_{\Omega}$ is clearly a preimage of $H_{\mathrm{CH}}$ in $\Omega^*(G/B)$, we conclude that $\Delta_{I_0}^\Omega(H_{\Omega}^{N})\in\pm1+\tau^1\Omega^*(G/B)$ by Lemma~\ref{tech}. Since $\tau^1\Omega^*(G/B)$ is nilpotent, it follows that $\Delta_I^\Omega(H_{\Omega}^{N})$ is invertible. 
\end{proof}

\begin{nt}[Ideal $\mathcal I_A$]
\label{ia}
For any free theory $A^*$ consider the characteristic map
$$
\mathfrak c_A\colon A^*(\mathrm BT)\rightarrow A^*(G/B),
$$
and let $\mathcal I_A=\mathcal I_A(G/B)$ denote the ideal of $A^*(G/B)$ generated by the set $\mathrm{Im}(\mathfrak c_A)\cap\mathrm{Ker}(\mathrm{deg}_A)$. 
\end{nt}

Let $E$ be a generic $G$-torsor, see Section~\ref{torsor}, and let us denote by $\overline{A^*}(E/B)$ the image of the restriction map 
$$
A^*(E/B)\rightarrow  A^*(G/B).
$$ 
We call elements of this set {\it rational}. Since $\Delta_i$ can be defined on $A^*(E/B)$ by the same formula~(\ref{sameBGGD}) as for $A^*(G/B)$, we conclude that $\Delta_i$ applied to a rational element is again rational.

The following result is obtained in~\cite[Lemma~5.3]{PShopf} (cf. also~\cite[Example~4.7]{PShopf}).

\begin{lm}
\label{bt=egen}
In the notation above, the ideal $\mathcal I_A$  coincides with the ideal generated by the set
$
{\,\overline{A^*}(E/B)\cap\mathrm{Ker}(\mathrm{deg}_A)}
$.
\end{lm}

Now we can prove the following
\begin{lm}
\label{non-danger}
In the above notation, for any free theory $A^*$, any set of indices $I$, and any $s>|I|$ the element $\Delta^A_I\big(\pi^A(h^s)\big)$ lies in $\mathcal I_A$. 
\end{lm}
\begin{proof}
We can assume that $A^*=\Omega^*$. Since $h=c^\Omega_1\big(\mathcal O_Q(1)\big)$ we conclude that $\pi^\Omega(h^s)$ is rational, and, therefore, $\Delta^\Omega_I\big(\pi^\Omega(h^s)\big)$ is rational as well. On the other hand, since $s>|I|$, we see that $\Delta^\Omega_I\big(\pi^\Omega(h^s)\big)$ lies in $\mathrm{Ker}(\mathrm{deg}_\Omega)=\tau^1\Omega^*(G/B)$ by Lemma~\ref{tech}.
\end{proof}

Finally, we obtain the following
\begin{lm}
\label{last}
Let $A^*$ be a free theory, and let us denote $L=\pi^A(l)$ and $H=\pi^A(h)$. Then 
$$
\Delta^A_{I_0}(H^{N}L)\in\Delta^A_{I_0}(H^{N})L+\mathcal I_A.
$$

\end{lm}
\begin{proof}
In the proof we write $\Delta$ for $\Delta^A$.
We show by induction on $|I|\leq N$ that $\Delta_{I}(H^{N}L)-\Delta_I(H^{N})L$ is a sum of elements of the following types:
\begin{enumerate}
\item
an $A^*(G/B)$-multiple of $\mathfrak c_A(x_\lambda)\Delta_I(H^{N})$ for some $\lambda\in M$;
\item
an $A^*(G/B)$-multiple of $\Delta_{I'}(H^{N})$ for $|I'|<|I|$.
\end{enumerate}
Since $\mathfrak c_A(x_\lambda)\in\mathcal I_A$, with the use of Lemma~\ref{non-danger} we obtain the claim.

For the induction step we apply $\Delta_i$ to each summand. First, for $a\in A^*(G/B)$ we have
$$
\Delta_i\big(a\,\mathfrak c_A(x_\lambda)\Delta_I(H^N)\big)=\Delta_i\big(a\,\mathfrak c_A(x_\lambda)\big)\Delta_I(H^N)+s_i\big(a\,\mathfrak c_A(x_\lambda)\big)(\Delta_i\circ\Delta_I)(H^N),
$$
where $s_i\big(a\,\mathfrak c_A(x_\lambda)\big)=s_i(a)\,\mathfrak c_A(x_{s_i(\lambda)})$, and we obtain summands of two types. 

Next,
$$
\Delta_i\big(a\,\Delta_{I'}(H^N)\big)=\Delta_i(a)\Delta_{I'}(H^N)+s_i(a)\,(\Delta_i\circ\Delta_{I'})(H^N),
$$
i.e., we obtain two summands of the second type.

Finally, we have to apply $\Delta_i$ to $\Delta_{I}(H^{N}L)-\Delta_I(H^{N})L$. Clearly, $$\Delta_i\big(\Delta_I(H^{N})L\big)=(\Delta_i\circ\Delta_I)(H^{N})\,L+s_i\big(\Delta_I(H^{N})\big)\Delta_i(L),$$ where
$$
s_i\big(\Delta_I(H^{N})\big)=\Delta_I(H^{N})-\mathfrak c_A(x_{\alpha_i})\big((\Delta_i\circ\Delta_I)(H^{N})\big),
$$
and the proof is finished.
\end{proof}

Now we can finish the proof of Proposition~\ref{bisc}, i.e., we can show that the map
$$
\mathrm K(n)^*(Q)\rightarrow\mathrm K(n)^*(\mathrm{SO}_m)
$$
factors through $\mathrm K(n)^*(\mathrm{pt})$.

\begin{proof}[Proof of Proposition~\ref{bisc}]
Let $h$, $l=l_d\in\mathrm K(n)^*(Q)$ be as in Proposition~\ref{mod2}, and $\pi\colon G/B\rightarrow Q$ denote the natural projection. In the notation above, we will prove that $H=\pi^{\mathrm K(n)}(h)$ and $L=\pi^{\mathrm K(n)}(l)$ belong to $\mathcal I_{\mathrm K(n)}$. Indeed, $H\in\mathcal I_{\mathrm K(n)}$ by Lemma~\ref{non-danger}. Next, applying $\pi^{\mathrm K(n)}$ to~(\ref{moravaformula}) we obtain the equation
$$
H^{d+1}=v_nH^NL.
$$
On the one hand, $\Delta_{I_0}^{\mathrm K(n)}(H^{d+1})$ lies in $\mathcal I_{\mathrm K(n)}$ by Lemma~\ref{non-danger}. On the other hand, 
$$
\Delta_{I_0}^{\mathrm K(n)}(H^{N}L)\in a\cdot L+\mathcal I_{\mathrm K(n)}
$$ 
for some invertible $a\in\mathrm K(n)^*(G/B)$ by Lemmata~\ref{last} and~\ref{invv}. This implies that $L$ lies in $\mathcal I_{\mathrm K(n)}$. 

Recall that $\mathcal I_{\mathrm K(n)}$ coincides with the kernel of the map
$$
\mathrm K(n)^*(G/B)\rightarrow\mathrm K(n)^*(G)
$$
by~(\ref{Kr12}). Then $h$, $l\in\mathrm K(n)^*(Q)$ go to $0$ in $\mathrm K(n)^*(G)$. However, $\mathrm K(n)^*(Q)$ is generated by $h$ and $l$ as algebra (see Proposition~\ref{mod2}), and, therefore, the map
$$
\mathrm K(n)^*(Q)\rightarrow\mathrm K(n)^*(\mathrm{SO}_m)
$$
factors through $\mathrm K(n)^*(\mathrm{pt})$ as claimed.
\end{proof}

\begin{proof}[Proof of Theorem~\ref{stab}]
Let $n$ be a natural number, $G=\mathrm{SO}_m$ for $m\geq2^{n+1}+1$, $P=P_1$ the maximal parabolic subgroup corresponding to the first root, $L$ the corresponding Levi subgroup (see Section~\ref{schcal}). Then $Q=G/P$ is a split $m-2$ dimensional quadric, and $C=[L,\,L]$ is isomorphic to $\mathrm{SO}_{m-2}$.

Using Proposition~\ref{bis} we conclude that
$$
\mathrm K(n)^*(\mathrm{SO}_{m-2})\cong\mathrm K(n)^*(P)\cong\mathrm K(n)^*(\mathrm{SO}_m)\otimes_{\mathrm K(n)^*(G/P)}\mathrm K(n)^*(\mathrm{pt})
$$
where the isomorphism is induced by the pullback along the natural inclusion.

On the other hand, by Proposition~\ref{bisc} we conclude that the natural map $\mathrm K(n)^*(Q)\rightarrow\mathrm K(n)^*(\mathrm{SO}_m)$ factors through $\mathrm K(n)^*(\mathrm{pt})$, i.e., $$
\mathrm K(n)^*(\mathrm{SO}_m)\otimes_{\mathrm K(n)^*(Q)}\mathrm K(n)^*(\mathrm{pt})\cong\mathrm K(n)^*(\mathrm{SO}_m)\otimes_{\mathrm K(n)^*(\mathrm{pt})}\mathrm K(n)^*(\mathrm{pt}).
$$
\end{proof}

\section{Computation of the Morava K-theory ring for special orthogonal groups}
\label{smallso}

In this section as previously we denote by $\kn -$ the Morava K-theory modulo~$2$, in particular, $\kn\pt=\mathbb F_2[v_n^{\pm1}]$. We will compute the ring $\kn\so$.

\subsection{Chow ring of the special orthogonal group}
\label{chowso}

The structure of the Chow ring of the group variety $\mathrm{SO}_m$ modulo $2$ is well-known
 (see~\cite[Table~II]{Kac}):
$$
\mathrm{CH}^*(\mathrm{SO}_m;\,\mathbb F_2)=\mathbb F_2[e_1,e_2,\ldots,e_s]/(e_i^2-e_{2i}),
$$
where $m=2s+1$ or $m=2s+2$, $e_i$ has degree $i$, and $e_{2i}$ stands for $0$  if $2i>s$.
The above identity can be written in a slightly different form: 
$$
\mathrm{CH}^*(\mathrm{SO}_m;\,\mathbb F_2)=
\mathbb F_2[e_1,e_3,\ldots,e_{2r-1}]/(e_{2i-1}^{2^{k_i}}),
$$
where $r=\lfloor\frac{m+1}{4}\rfloor$, $k_i=\lfloor\mathrm{log}_2(\frac{m-1}{2i-1})\rfloor$.  
Moreover, the multiplication of $\mathrm{SO}_m$
induces a Hopf algebra structure on $\mathrm{CH}^*(\mathrm{SO}_m;\,\mathbb F_2)$
and the elements $e_{2i-1}$ are primitive (or Lie-like) for the comultiplication $\Delta$,
 i.e., $\Delta(e_{2i-1})=1\otimes e_{2i-1}+e_{2i-1}\otimes1$.

These notations do not contradict each other, and it is sometimes convenient to write $e_{2^k(2i-1)}$ instead of $e_{2i-1}^{2^k}$.

\subsection{Hopf theoretic lemmata}
\label{hopftheoreticlemmata}

The results of this section are based on the general statements about Hopf algebras.

Consider a graded Hopf algebra over $\F_{p}$:
$$
H= \F_{p}[x_1,\ldots, x_r]/(x_1^{p^{k_1}}, \ldots , x_r^{p^{k_r}})
$$
where $x_i$ are primitive of positive degree, $k_i>0$.
By an {\it index set} in this section we mean an $r$-tuple $I=(d_1,\ldots,d_r)$ 
with $0\leq d_i<p^{k_i}$. For an index set $I$ let $x_I$ denote $\prod_{i=1}^rx_{i}^{d_i}$,
thus, $\{x_I\mid I\text{ an index set\,}\}$ is an $R$-basis of $H\otimes_{\mathbb F_p}R$
 over any $\mathbb F_p$-algebra $R$. 
 We call it {\it the standard basis}. 
 
The following technical lemma is used in the proof of Proposition~\ref{abstracthopf}.

\begin{Lm}
\label{technicalhopf}
In the above notation 

\noindent
a{\rm)} All primitive elements of $H$ are equal to $\sum\limits_hx_{j_h}^{p^{q_h}}$ for some $j_h$ and $q_h<k_{j_h}$.\\
b{\rm)} Assume that for $P\in H$, $Q\in H\otimes_{\mathbb F_p} H$ and $k\in\mathbb N$ the equality $$\Delta(P)=P \otimes 1 + 1\otimes P + Q^{\,p^{k}}$$ holds. Then
$P-S^{\,p^{k}}$ is primitive for some $S\in H$.
\end{Lm}
\begin{proof}
Claim ``a)'' follows from the description of primitive elements in the tensor product of Hopf algebras and the easy case $r=1$. In fact, it also follows from the argument below.

To obtain ``b)'' decompose $P$ in the standard basis $x_I$, and let $f$ be the minimal possible non-negative integer such that for some tuple $I=(d_1,\ldots,d_r)$ with $d_i=p^{f}q$, $p\nmid q$, the element $x_I$ 
 appears in the decomposition of $P$. We can assume that $f<k$, since otherwise $P=S^{p^k}$ for some $S$ and the claim follows.
  
 The primitivity of $x_i$ implies that $\Delta(x_I)$ has the monomial $x_{i}^{p^{f}}\otimes x_{J}$ with a non-zero coefficient in its decomposition for some $J$. Moreover, this monomial does not appear in the decomposition of $\Delta(x_{I'})$ for $I\neq I'$.
 
 However, $P \otimes 1 + 1\otimes P + Q^{\,p^{k}}$ cannot contain an element $x_{i}^{p^{f}}\otimes x_{J}$ in its decomposition for $x_J\neq1$, since $f<k$. This allows us to conclude that $x_I=x_{i}^{p^{f}}$. Then we can change $P$ by $P-x_{i}^{p^{f}}$ and continue by induction.
\end{proof}

The following purely Hopf theoretic proposition is a key ingredient in our argument.

\begin{Prop}
\label{abstracthopf}

Let $H$ be a $\ZZ$-graded Hopf algebra over $\F_{p}[v_n]$ which is free and finitely generated as an $\F_p[v_n]$-module {\rm(}as usual, $v_n$ has degree $1-p^n${\rm)}.
Assume that $H/v_n$ has 
homogeneous generators $\overline x_1, \ldots, \overline x_r$ of positive degrees such that they all are primitive for the Hopf algebra structure and, moreover, there exists an isomorphism
$$
H/v_n\cong \F_{p}[\overline x_1,\ldots, \overline x_r]/(\overline x_1^{\,p^{k_1}}, \ldots , \overline x_r^{\,p^{k_r}})
$$
for some positive numbers $k_i$. Assume furthermore that for every pair of numbers $i$, $j$ from $1$ to $r$ {\rm(}in particular, these may be equal{\rm)} the equation
\begin{align}
\label{nosolutions}
(1-p^n)x+\deg \overline x_j\cdot p^y = \deg \overline x_i\cdot p^{k_i} 
\end{align}
has no solutions for $x>0$, $0\le y<k_j$, $x$, $y\in\mathbb Z$. Then there exist homogeneous lifts $x_i$ of the variables 
$\overline x_i$ to $H$ such that  $$H\cong \F_{p}[v_n][x_1,\ldots, x_r]/(x_1^{p^{k_1}}, \ldots, x_r^{p^{k_r}})$$ as an algebra.

Moreover, assume that for some $i$ there exists $\widetilde x_i\in H$ 
which lifts $\overline x_i$ and such that $\Delta(\widetilde x_i)$ can be written as
an $\mathbb{F}_p[v_n]$-linear combination of
$\widetilde x_i^{\,b}\otimes \widetilde x_i^{\,c}$, $b$, $c\ge 0$. Then we can choose $x_i=\widetilde x_i$.
\end{Prop}

\begin{proof}
Let $\overline H$ denote $H/v_n$, and if $h$ is an element of $H$ or of $H\otimes_{\mathbb F_p[v_n]}H$,
then let $\overline h$ denote the image of $h$ modulo $v_n$ in $\overline H$ or, respectively, in $\overline  H\otimes_{\mathbb F_p}\overline  H$.

First, let us choose arbitrary lifts of $\overline x_i$ to $H$ 
and denote them $x_i$. We will modify these generators of $H$
 to obtain relations of the correct form.

By the assumptions 
for every $i$ and for some $a_i>0$ and $Q_i\in H$ a relation of the following form holds in $H$:
\begin{align}
\label{list}
x_i^{p^{k_i}} = (v_n)^{a_i}\,Q_i.
\end{align}

We will describe a procedure that at each step chooses a new lift of $\overline x_i$ so that the value $a_i$ in the relation~(\ref{list}) increases. Since the values $a_i$ are bounded above by the fact that $H^k=0$ for $k>\!>0$, the procedure will stop after a finite number of steps, i.e., $x_i^{p^{k_i}}$ for every $i$ will become equal to zero.

When this is achieved, we claim that $H\cong \F_{p}[v_n][x_1,\ldots, x_r]/(x_1^{p^{k_1}}, \ldots x_r^{p^{k_r}})$. 
Let $R\subset \mathbb F_p[v_n][x_1,\ldots, x_r]$ be the ideal of the relations of $H$.
Since $H$ is a free $\mathbb F_p[v_n]$-module, it follows that
 $R/v_n=(\bar{x}_1^{p^{k_1}}, \ldots \bar{x}_r^{p^{k_r}})$ 
 and we have shown that $(x_1^{p^{k_1}}, \ldots x_r^{p^{k_r}})\subset R$.
Therefore, $\left(R/(x_1^{p^{k_1}}, \ldots x_r^{p^{k_r}})\right)/v_n$ is zero, which implies
$R=(x_1^{p^{k_1}}, \ldots x_r^{p^{k_r}})$.

To choose a different lift of $\overline x_i$ 
we make a ``change of variables''
\begin{align}
\label{change}
x_i^{\mathrm{new}} = x_i + v_n S,
\end{align}
for some $S\in H$, so that $x_i^{\mathrm{new}}$ satisfies the relation $(x_i^{\mathrm{new}})^{p^{k_i}} = (v_n)^{a_i^{\mathrm{new}}}\,Q_i^{\mathrm{new}}$. Our goal is to find $S$ such that $a_i^{\mathrm{new}}>a_i$.
After we have performed this change, we will denote $x_i^{\mathrm{new}}$ just as $x_i$ 
by abuse of notation.

Let us choose a list of relations among~(\ref{list}), one for every $x_j$. Furthermore,
we can assume that for every $j$ element $\overline Q_j$ is non-zero, i.e., $Q_j$ is not divisible by $v_n$.
Let $i$ be the index of a variable $x_i$ with minimal $a_i=:a$.
The relation in our list for this $i$ can be written as follows: 
\begin{align}
\label{starting}
x_i^{p^{k_i}} = v_n^a P + v_n^{a+1}R
\end{align}
where $a\ge 1$, $R\in H$ and $P$ is a linear combination of $x_I$ with $\F_{p}$-coefficients

Let $Q_t$ be linear combinations of $x_I\otimes x_J$ with $\F_{p}$-coefficients
such that $$\Delta(x_i) = x_i \otimes 1 + 1\otimes x_i + \sum_{t\ge t_0} v_n^t Q_t,$$ $t_0\ge 1$.
Then we have 
\begin{equation}\label{eq:power-Delta}
\Delta(x_i)^{p^{k_i}} = (x_i)^{p^{k_i}} \otimes 1 + 1\otimes (x_i)^{p^{k_i}}
 + \sum_{t\ge t_0} v_n^{t\cdot p^{k_i}} (Q_t)^{p^{k_i}}.
\end{equation}
We now rewrite the sum containing $(Q_t)^{p^{k_i}}$ using relations between the variables from the chosen list of relations. 
If for some $t\geq t_0$ we have that $(\overline{Q_{t}})^{p^{k_i}}=0$,
then we claim that $(Q_{t})^{p^{k_i}}=v_n^a\,Q'$  for $Q'\in H\otimes_{\mathbb F_p[v_n]} H$. 
Indeed,
if we express $Q_{t}$ as a linear combination of $x_I\otimes x_J$ with $\F_{p}$-coefficients,
then $(Q_{t})^{p^{k_j}}$ goes to zero in $\overline H\otimes_{\mathbb F_p}\overline H$ 
only if for all non-trivial summands $x_I\otimes x_J$ 
of $Q_t$ either $\overline x_I^{p^{k_j}}$ or $\overline x_J^{p^{k_j}}$  is zero in $\overline H$.
However, this means precisely that $x_l^q$ with $q>p^{k_l}$ appears in either $x_I^{p^{k_j}}$ or $x_J^{p^{k_j}}$.
Therefore, we can apply one of the relations~(\ref{list}) from the chosen list
and using the fact that $a\le a_l$ by our choice, we can replace $(Q_{t})^{p^{k_j}}$
 by $v_n^a Q'$ for some $Q'$.

Using this and plugging in relation~(\ref{starting}) we can rewrite~(\ref{eq:power-Delta})
for some $t_1\geq t_0$ and some $R'$ as follows:

$$
\Delta\big(x_i^{p^{k_i}}\big) = v_n^a\left(P \otimes 1 + 1\otimes P\right)
 + \sum_{t\ge t_1} v_n^{t\cdot p^{k_i}} (Q_t)^{p^{k_i}}
 + v_n^{a+1}R'
$$ 
where $(\overline{Q_{t_1}})^{p^{k_i}}$ is not zero in $\overline H\otimes_{\mathbb F_p}\overline  H$. 

On the other hand, from~(\ref{starting}) we have 
$$
\Delta\big(x_i^{p^{k_i}}\big) =\Delta (v_n^a P + v_n^{a+1} R) = v_n^a\, \Delta(P) + v_n^{a+1} \Delta(R).
$$ 
Comparing these two expressions of $\Delta\big(x_i^{p^{k_i}}\big)$ we see that $a\leq t_1 p^{k_i}$,
as otherwise $Q_{t_1}^{\,p^{k_i}}$ needs to be divisible by $v_n$ contradicting our assumptions.

We show now 
that $a<t_1 p^{k_i}$ cannot happen. 
Indeed, in this case we would obtain that $\overline P$ is primitive (and non-zero by the assumption).
Moreover, relation~(\ref{starting}) gives us that $p^{k_i} \deg (x_i) = \deg P -(1-p^n)a$.
However, all primitive elements in $\overline H$ are of the form $\sum_h\overline x_{j_h}^{p^{q_h}}$ for some $j_h$ and $q_h<k_{j_h}$ by Lemma~\ref{technicalhopf}\,a), and, therefore, the equation on the degrees can be rewritten
as $p^{k_i} \deg (x_i) = (p^n-1)a+p^{s_h}(\deg x_{j_h})$ for each $h$. This equation has no solutions by our assumption~(\ref{nosolutions}).

Let us now consider the case $a=t_1 p^{k_i}$. Again comparing two expressions of $\Delta\big(x_i^{p^{k_i}}\big)$
we see that $\Delta(\overline P)$ in the quotient Hopf algebra 
has the form ${\overline P \otimes 1 + {1\otimes\overline P} {+ \,\overline {Q_{t_1}^{\,p^{k_i}}} }}$.
By Lemma~\ref{technicalhopf}\,b) it follows that $\overline P- \overline S^{\,p^{k_i}}$ is primitive for some $\overline S\in\overline H$. However, assumption~(\ref{nosolutions}) again implies that such a primitive element equals zero, i.e., $\overline P= \overline S^{\,p^{k_i}}$. 
Now take any lift $S\in H$ of $\overline S$ and define  $x_i^{\mathrm{new}} = x_i -v_n^{\,t_1} S$. 
Then relation~(\ref{starting}) can be rewritten as
$$ (x_i^{\mathrm{new}})^{p^{k_i}} = v_n^a (P-S^{p^{k_i}})+v_n^{a+1}R
=v_n^{a+1}  {Q_i}^{\mathrm{new}}$$
for some ${Q_i}^{\mathrm{new}}$, so that we have performed the required modification of the variables. 

To finish the proof of the proposition we need to explain that if we choose 
$x_i$ such that $\Delta(x_i)$ is an $\mathbb{F}_p[v_n]$-linear combination of $x_i^b\otimes x_i^c$, $b$, $c\ge 0$,
then the procedure described above does not modify $x_i$.
Indeed, in this case every $Q_t$ in~\eqref{eq:power-Delta} is divisible by $x_i\otimes x_i$ 
(i.e., in every monomial $x_i^b\otimes x_i^c$ we have $b$, $c\ge 1$), 
and, therefore, rewriting~\eqref{eq:power-Delta} as described above we will get 
$t_1p^{k_i}\geq2a>a$. In particular, 
the case $a=t_1p^{k_i}$ never happens, 
and the variable $x_i$ is not modified by our procedure.
\end{proof}

\begin{Rk}
The following example shows that there exists a graded Hopf algebra $H$ that is 
 free and of finite rank as an $\F_{2}[v_n]$-module for $v_n$ of negative degree and 
 does not satisfy~(\ref{nosolutions}) and the claim of Proposition~\ref{abstracthopf}.  Let $n=1$, $p=2$, 
 and consider the following Hopf algebra:

$$H:= \F_2[v_1][x_1,x_3]/(x_1^2-v_1 x_3, x_3^4),$$
where $\deg v_1 = -1$, $\deg x_i = i$
and the comultiplication is given by the following formulae
$$ \Delta (x_1) = x_1 \otimes 1 + 1\otimes x_1 + v_1 x_1 \otimes x_1,\quad
\Delta (x_3) = x_3 \otimes 1 + 1\otimes x_3 + v_1^3 x_3 \otimes x_3.$$
It is easy to check that $H/v_1$ and $H/(v_1-1)$ are not isomorphic as algebras.
In other words, condition~(\ref{nosolutions}) in Proposition~\ref{abstracthopf} cannot be omitted.
\end{Rk}

\subsection{Associated graded ring of the connective Morava K-theory}

In this section we consider the connective Morava K-theory modulo $p$ and denote it by $\ckn -$. In particular, $\ckn\pt=\mathbb F_p [v_n]$ and $\deg v_n=1-p^n$. We will work with the topological filtration $\tau^*$ on the connected Morava K-theory, see Subsection~\ref{filtration}. We remark that each associated graded piece of the topological filtration is itself graded, so that $\grckn -$ is a bi-graded theory, and after a change of grading it coincides with the graded ring associated with the filtration on $\ckn -$ by powers of $v_n$,
$$
\grckn -=\bigoplus_{i\geq0}\,\frac{v_n^i\,\ckn -}{v_n^{i+1}\,\ckn -},
$$
cf. the analogous statement on algebraic cobordism in~\cite[Theorem~4.5.7]{LM} and~\cite[Section~4]{Vcob}.

Recall that a torsion in a graded $\mathbb F_p[v_n]$-module $M=(M_i)_{i\in\mathbb Z}$ can only be a $v_n^\mathbb{Z}$-torsion.
This simple fact implies the following result.

\begin{Prop}
\label{onlytorsion}
Let $G$ be a split reductive group.\\ 
i{\rm)} The connective Morava K-theory of $G$ has the following {\rm(}non-canonical{\rm)} form as a graded $\mathbb F_p [v_n]$-module:
$$
\mathrm{CK}(n)^*(G)=\bigoplus_i\mathbb F_p[v_n]\cdot x_i\oplus\bigoplus_j\big(\mathbb F_p[v_n]/v_n^{d_j}\big)\cdot y_j,
$$
for some homogeneous $x_i$, $y_j\in\mathrm{CK}(n)^*(G)$, $d_j\geq0$ {\rm(}and the direct sums are finite{\rm)}. If $\overline x_i$, $\overline y_j$ denote the classes of $x_i$, $y_j$ in the associated graded ring $\mathrm{gr}_\tau^*\mathrm{CK}(n)^*(G)$, then
$$
\mathrm{gr}_\tau^*\mathrm{CK}(n)^*(G)=\bigoplus_i\mathbb F_p[v_n]\cdot \overline x_i\oplus\bigoplus_j\big(\mathbb F_p[v_n]/v_n^{d_j}\big)\cdot \overline y_j.
$$
ii{\rm)} We can identify $\mathrm{CH}^*(X;\,\mathbb F_p[v_n])$ with
$$
\bigoplus_i\mathbb F_p[v_n]\cdot x_i\oplus\bigoplus_j\mathbb F_p[v_n]\cdot y_j.
$$
Then the canonical map
\begin{align}
\label{chgr}
\rho\colon\mathrm{CH}^*(G;\,\mathbb F_p[v_n])\twoheadrightarrow\mathrm{gr}_\tau^*\mathrm{CK}(n)^*(G)
\end{align}
from~\cite[Corollary~4.5.8]{LM} 
sends $x_i$ to $\overline x_i$ and $y_j$ to $\overline y_j$.\\
iii{\rm)} The Hopf algebra structure on $\mathrm{CK}(n)^*(G)$ induces a Hopf algebra structure on $\mathrm{gr}_\tau^*\mathrm{CK}(n)^*(G)$, and the map~{\rm($\!\,^{\!}$\ref{chgr}\rm)} is a morphism of Hopf algebras.
\end{Prop}

\begin{proof}
 {\it i}{\rm)} Since $\mathrm{CK}(n)^*(G)$ is finitely generated over $\mathbb F_p[v_n]$, e.g., by~\eqref{Kr12}, the first claim of $i$) follows from the remark on torsion above. To get the second claim, recall that the topological filtration on $\mathrm{CK}(n)^*(G)$ coincides with the filtration by the powers of $v_n$.\\ 
{\it ii}{\rm)} Recall that $\mathrm{CH}^*(G;\,\mathbb F_p[v_n]) = \mathrm{CK}(n)^*(G)/v_n \otimes_{\mathbb F_p} \mathbb F_p[v_n]$, 
and the map $\rho$ is $\mathbb F_p[v_n]$-linear and sends an element of $\mathrm{CH}^*(G;\,\mathbb F_p)$ to the class of its lift in $\mathrm{CK}(n)^*(G)$.\\
{\it iii}{\rm)} The comultiplication on $\mathrm{gr}_\tau^*\mathrm{CK}(n)^*(G)$ induced from $\mathrm{CK}(n)^*(G)$ coincides with 
the comultiplication induced from $\mathrm{CH}^*(G;\,\mathbb F_p[v_n])$ via $\rho$. 
Since the latter defines a Hopf algebra structure, $iii$) follows.
\end{proof}

\subsection{Computation of the associated graded ring}
\label{ass-gr-so}

In the notation of the previous section, we assume additionally that $p=2$ and $G=\so$.

In this section we will compute $\grckn\so$ and show that $\ckn\so$ is a finitely generated free $\F_2[v_n]$-module for $m\le 2^{n+1}$. Luckily, this is precisely the range of $m$ when the $n$-th Morava K-theory of $\mathrm{SO}_m$ is not stabilised, see Theorem~\ref{stab}.

The key ingredient in the proof of this result is Vishik's theorem on the relations in algebraic cobordism~\cite[Theorem~4.3]{Vcob}, see Lemma~\ref{es} below. 

Recall that Proposition~\ref{bis} implies that
$$
\mathrm{CK}(n)^*(\mathrm{SO}_{m-2})\cong\mathrm{CK}(n)^*(\mathrm{SO}_{m})\otimes_{\mathrm{CK}(n)^*(Q_{m-2})}\mathbb F_2[v_n],
$$
for $Q_{m-2}$ a split $(m-2)$-dimensional quadric $\mathrm{SO}_{m}/P_1$, and the isomorphism is induced by the pullback along the natural inclusion of $\mathrm{SO}_{m-2}$ into $\mathrm{SO}_{m}$. Moreover, Lemma~\ref{bt=egen} implies that the image of $h\in\mathrm{CK}(n)^*(Q_{m-2})$ is trivial in $\mathrm{CK}(n)^*(\mathrm{SO}_{m})$, and, therefore, the kernel of the pullback map 
$$
\mathrm{CK}(n)^*(\mathrm{SO}_{m})\twoheadrightarrow\mathrm{CK}(n)^*(\mathrm{SO}_{m-2})
$$
is generated (as an ideal) by a single element $y\in\mathrm{CK}(n)^*(\mathrm{SO}_{m})$, more precisely, by the image of $l\in\mathrm{CK}(n)^*(Q_{m-2})$
where $l$ is the class of an $\lfloor(m-2)/2\rfloor$-dimensional linear subspace.
 Since $l^2$ is divisible by $h$, we conclude that $y^2=0$.

\begin{Lm}
\label{es}
In the above notation let $m\leq2^{n+1}$. Then the image $\overline y$ of $y$ in $\mathrm{gr}_\tau^*\mathrm{CK}(n)^*(\mathrm{SO}_{m})$ is not a $v_n^{\mathbb Z}$-torsion.
\end{Lm}
\begin{proof}
Denote $G=\mathrm{SO}_m$. It follows from~\cite[Theorem 4.3]{Vcob} that $\mathrm{CK}(n)^*(G)=\Omega^*(G)\otimes_{\mathbb{L}} \mathbb{F}_2[v_n]$ has generator $1$ in degree $0$ and other generators in positive degrees with relations in positive degrees {\rm(}as $\mathbb{F}_2[v_n]$-module{\rm)}. Moreover, the latter generators can be chosen to be the lifts of any homogeneous $\mathbb F_2$-basis of $\mathrm{CH}^*(G;\,\mathbb F_2)$, see~\cite[Proof of Theorem~4.3]{Vcob}.

In particular, an element $x\in\mathrm{CK}(n)^i(G)$, $i\le 2^n-1$, projecting to a non-trivial element in $\mathrm{CH}^i(G;\,\mathbb F_2)$, cannot be a $v_n^{\mathbb Z}$-torsion.

Next, recall that the topological filtration on $\mathrm{CK}(n)^*(G)$ coincides with the filtration by the powers of $v_n$. 
Thus, if $\overline{y}\in\mathrm{gr}^*_\tau\mathrm{CK}^*(n)$ is a $v_n^{r}$-torsion for some $r$, 
then $v_n^r y = v_n^{r+1}z$ for some $z\in\mathrm{CK}(n)^*(G)$.  
However, this implies that $v_n^r (y-v_nz) = 0$ contradicting the above observation (cf. Proposition~\ref{onlytorsion}\,{\it i}).

\end{proof}

\begin{Rk}
Alternatively, the claim that $\overline y$ is not a $v_n^{\mathbb Z}$-torsion
can be obtained using \cite[Proposition~6.2]{Se2}. 
Note that both \cite{Se2} and \cite{Vcob} are based on Vishik's classification of operations
in \cite{V12}, so that it is hardly a different argument at the end of the day. 
\end{Rk}

Now we are ready to compute $\grckn\so$ for $m\leq2^{n+1}$.

\begin{Prop}\label{prop:graded_isom}
For $m\le 2^{n+1}$ the canonical surjective morphism 
$$
\rho:\mathrm{CH}^*(\so;\,\mathbb F_2[v_n])\twoheadrightarrow\grckn\so
$$ 
is an isomorphism.
\end{Prop}

\begin{proof}

We prove the claim by induction on $m$. The base of induction $m=1$, $2$ is clear.

Consider the following commutative diagram with vertical pullback maps:

$$
\begin{tikzcd}
\mathrm{CH}^*(\mathrm{SO}_{m};\,\mathbb F_2[v_n])\ar[two heads]{r}{\rho_{m}}\ar[two heads]{d}&\grckn{\mathrm{SO}_{m}}\ar[two heads]{d}\\
\mathrm{CH}^*(\mathrm{SO}_{m-2};\,\mathbb F_2[v_n])\ar{r}{\rho_{m-2}}[swap]{\sim}&\grckn{\mathrm{SO}_{m-2}}
\end{tikzcd}
$$

The kernels of the vertical maps are generated by elements $e_s$ and $\overline y$, respectively,  
where ${s=\lfloor\frac{m-1}{2}\rfloor}$, see Subsection~\ref{chowso}.
We can assume that $e_s$ coincides with the image of $l\in\mathrm{CH}^*(Q_{m-2};\,\mathbb F_2[v_n])$ in $\mathrm{CH}^*(\mathrm{SO}_{m};\,\mathbb F_2[v_n])$ under the natural pullback map, where $Q_{m-2}=\mathrm{SO}_{m}/P_1$ is the projective quadric of dimension $m-2$ (cf.~\cite[Proof~of~Lemma~7.2]{PShopf}). 

From the commutativity of the above diagram one can see that the kernel of $\rho_{m}$ is contained in the ideal $(e_s)$. Then it is easy to check that $\rho_{m}$ is injective on primitive elements. 

Indeed, by Lemma~\ref{technicalhopf} and by the computation of Chow rings of $\grckn{\mathrm{SO}_{m}}$ (see Subsection~\ref{chowso}), the primitive elements of $\CH^*(\mathrm{SO}_{m};\,\mathbb F_2[v_n])$ are of the form  
$\sum_{i=1}^{s} P_i\cdot e_i$ where $P_i\in \mathbb F_2[v_n]$. The intersection of this set with the ideal $(e_s)$ is just $\mathbb F_2 [v_n]\cdot e_s$. However, by Lemma~\ref{es} the image $y$ of $e_s$ under $\rho$ is not a $v_n^{\mathbb Z}$-torsion, i.e., $\rho(P_s \cdot e_s)$ is zero if and only if $P_s=0$.

Now the injectivity of $\rho_{m}$ follows from Lemma~\ref{mimo} below with $K=\mathbb F_2[v_n]$, $A=\mathrm{CH}^{*}(\mathrm{SO}_{m};\,\mathbb F_2[v_n])$ and $A'=\mathrm{gr}^*_\tau\mathrm{CK}(n)^*(\mathrm{SO}_{m})$ with grading coming from the topological filtration.
\end{proof}

\begin{lm}
\label{mimo}
Let $\varphi\colon A\twoheadrightarrow A'$ be a surjective homomorphism of non-negatively graded Hopf algebras over a ring $K$ with $A^0=K=(A')^0$. Assume that $\mathrm{Ker}\,\varphi$ does not contain primitive elements. Then $\varphi$ is bijective.
\end{lm}
\begin{proof}
We can repeat the proof of~\cite[Proposition~3.9]{MilnorMoore} using the surjectivity of $\varphi$ instead of the flatness of the target Hopf algebra $A'$ over $K$.

We will show by induction that
$$
A^k\cong(A')^k
$$
via $\varphi$ for all $k>0$. Let us denote $J=A^{>0}$ and $J'=(A')^{>0}$. Then
$$
\xymatrix{
P=\mathrm{Ker}\,\big(J\ar[rrr]^{\,a\,\mapsto\,\Delta(a)-1\otimes a-a\otimes1}&&&J\otimes_{K}J\big)
}
$$
obviously coincides with the set of primitive elements of $A$, and similarly
$$
\xymatrix{
P'=\mathrm{Ker}\,\big(J'\ar[rrr]^{\,a\,\mapsto\,\Delta(a)-1\otimes a-a\otimes1}&&&J'\otimes_{K}J'\big)
}
$$
coincides with the set of primitive elements of $A'$. Since 
$$
\big(J\otimes_{K}J\big)^k=\bigoplus_{i=1}^{k-1}A^i\otimes_{K}A^{k-i}
\quad\text{ and }\quad
\big(J'\otimes_{K}J'\big)^k=\bigoplus_{i=1}^{k-1}(A')^i\otimes_{K}(A')^{k-i},
$$
and $A^{i}\cong(A')^{i}$ for $i<k$ by induction hypothesis, we obtain that 
$$
(\varphi\otimes\varphi)^k\colon (J\otimes_{K}J)^k\rightarrow (J'\otimes_{K}J')^k
$$
is bijective. Now we can deduce the injectivity of $\varphi^k$ from the diagram
$$
\xymatrix{
P^k\ar[r]\ar@{^{(}->}[d]&J^k\ar[r]\ar[d]&(J\otimes_{K}J)^k\ar[d]^{\cong}\\
(P')^k\ar[r]&(J')^k\ar[r]&(J'\otimes_{K}J')^k.
}
$$
\end{proof}

\begin{Cr}
\label{free}
For $n\geq1$, $m\le 2^{n+1}$ the group $\mathrm{CK}(n)^*(\mathrm{SO}_m)$ is a finitely generated free $\F_2[v_n]$-module.
\end{Cr}
\begin{proof}
This follows, e.g., from Proposition~\ref{onlytorsion}\,{\it i}{\rm)}.
\end{proof}

\subsection{Computation of the connective Morava K-theory}

In this section we compute the connective Morava K-theory of 
$\mathrm{SO}_m$ {\it as an algebra} for small $m$.

\begin{Th}
\label{connective}
For $n\geq1$, $m\leq2^{n+1}$ the algebra $\ckn\so$ has the following structure:
$$\ckn\so = \F_2[v_n] [\widetilde e_1, \widetilde e_3, \ldots, \widetilde e_{2r-1}]/(\widetilde e_{2i-1}^{\,\,2^{k_i}}),$$
where $r=\lfloor\frac{m+1}{4}\rfloor$, $k_i=\lfloor\mathrm{log}_2(\frac{m-1}{2i-1})\rfloor$, and, moreover, all $\,\widetilde e_{2i-1}$ map to $e_{2i-1}$ in the Chow ring.

Furthermore, $\widetilde{e}_1$ can be chosen as $c_1^{\mathrm{CK}(n)}(e^{int}_1)$,
where $e^{int}_1$ is a lift of ${e_1\in\CH^1(\mathrm{SO}_m; \mathbb{F}_2)}$ to $\CH^1(\mathrm{SO}_m; \mathbb{Z})\cong \mathrm{Pic}(\mathrm{SO}_m)$.
\end{Th}

The proof is just an application of Proposition~\ref{abstracthopf} to $\ckn\so$.
To ensure the assumptions we need the following results.

\begin{Lm}
\label{techclaimlemma}
Take $n\geq1$ and suppose that $m\le 2^{n+1}$.
Then for all ${0<i,\,j\leq\lfloor\frac{m+1}{4}\rfloor}$  
the equation
\begin{align}
\label{techclaim}
(1-2^n)x+(2j-1)2^y=(2i-1)2^{k_i}
\end{align}
with $k_h=\lfloor\mathrm{log}_2(\frac{m-1}{2h-1})\rfloor$ for $h=i,j$ has no integral solutions $x>0$, $0\le y<k_j$.
\end{Lm}
\begin{proof}
Since $(2j-1)2^{k_j} \le 2^{n+1}-1$ and $y<k_j$, we can conclude that ${(2j-1)2^y\leq2(2^n-1)}$. Therefore, for $x\geq2$ we get a contradiction. In the case $x=1$ we conclude that $y$ cannot be positive, because the left hand side of~(\ref{techclaim}) would be odd, while the right hand side would be even, neither $y$ can equal $0$, because the left hand side of~(\ref{techclaim}) would be non-positive, while the right hand side would be positive.
\end{proof}

\begin{Lm}
\label{delta-picard}
In the notation of Theorem~\ref{connective} consider $\widetilde{e}_1= c_1^{\mathrm{CK}(n)}(e^{int}_1)$ in $\ckn\so$. 
Then $\Delta(\widetilde{e}_1) = \sum a_{ij}\, \widetilde{e}_1^{\,i} \otimes \widetilde{e}_1^{\,j}$ where
$\sum a_{ij\,}x^iy^j$ is the formal group law of ${\mathrm{CK}(n)^*}$.
\end{Lm}
\begin{proof}
The class $e_1^{int}\in \CH^1(G)\cong\mathrm{Pic}(G)$ can be identified with the class of a line bundle $L$ on $G$.
Denote by $m\colon G\times G\rarr G$ the multiplication.
The line bundle $m^*L$ on $G\times G$ is isomorphic to $p_1^*(L)\otimes p_2^*(L)$
as can be seen, e.g., from the primitivity of $e_1^{int}$ in the Hopf algebra $\CH^*(G)$ (this follows
from the grading reasons).

However, the first Chern class commutes with pullbacks, 
therefore,
$$
\Delta(\widetilde e_1)=m^{\mathrm{CK}(n)}(\widetilde e_1) = c_1^{\mathrm{CK}(n)}\big( m^*(e^{int}_1)\big)=c_1^{\mathrm{CK}(n)}\big(p_1^*(L)\otimes p_2^*(L)\big).
$$
Thus, by definition of the formal group law for $\mathrm{CK}(n)$ we can 
conclude that $\Delta(\widetilde e_1)$ equals
$\sum a_{ij}\, p_1^*\big(c_1^{\mathrm{CK}(n)}(L)\big)^i p_2^* 
\big(c_1^{\mathrm{CK}(n)}(L)\big)^j.$
\end{proof}

This finishes the proof of Theorem~\ref{connective}, and combining it with Theorem~\ref{stab} we obtain the following result.

\begin{tm}
\label{answer}
For the group $\mathrm{SO}_m$ with $m\leq2^{n+1}$, $n\geq1$, the ring $\mathrm K(n)^*(\mathrm{SO}_m)$ is non-canonically isomorphic to the ring $\mathrm{CH}^*(\mathrm{SO}_m;\,\mathbb F_2[v_n^{\pm1}])$. For $m\geq 2^{n+1}+1$, the canonical map $$\mathrm K(n)^*(\mathrm{SO}_m)\rightarrow\mathrm K(n)^*(\mathrm{SO}_{m-2})$$ is an isomorphism.
\end{tm}

As an immediate corollary of this theorem and \cite[Theorem~5.7]{PShopf} we obtain the following statement. 

\begin{cl}\label{cormax}
Let $X$ be a connected component of the maximal orthogonal Grassmannian for a generic $m$-dimensional quadratic form with trivial discriminant.

Then there exists a $\mathrm K(n)$-motive $\mathcal R$ such that the $\mathrm K(n)$-motive of $X$ decomposes as:
\begin{equation}\label{decompGB}
\mathcal{M}_{\mathrm K(n)}(X)\simeq\bigoplus_{i\in\mathcal{I}}\mathcal{R}\{i\}
\end{equation}
for some multiset of integers $\mathcal{I}$.

Moreover, over a splitting field of $X$ the rank of $\mathrm K(n)^*(\overline{\mathcal R})$ equals the rank of $\mathrm K(n)^*(\mathrm{SO}_m)$. In particular, for $m> 2^{n+1}$ the $\mathrm K(n)$-motive of $X$ is always non-trivially decomposable. {\rm(}Note that contrary to this by a result of Vishik the Chow motive of $X$ is always indecomposable{\rm)}. 
\end{cl}

\begin{proof}
Note that~\cite[Theorem~5.7]{PShopf} is formulated for the variety of Borel subgroups. Nevertheless, since the variety $X$ 
is generically split, it has the same type of a motivic decomposition, and~\cite{PShopf} can be applied. Note also that $H^*$ in~\cite[Theorem~5.7]{PShopf} equals $\mathrm K(n)^*(\mathrm{SO}_m)$, since $X$ corresponds to a  generic quadratic form (see~\cite[Example~4.7]{PShopf}).

Now a direct application of~\cite[Theorem~5.7]{PShopf} implies decomposition~\eqref{decompGB}, where $\mathcal R$ is a motive such that $\mathrm K(n)^*(\overline{\mathcal R})\simeq\mathrm K(n)^*(\mathrm{SO}_m)$ as graded $\F_2[v_n^{\pm 1}]$-modules.

It remains to compare the ranks of $\mathrm K(n)^*(\mathrm{SO}_m)$ and $\mathrm K(n)^*(\overline X)$ when $m>2^{n+1}$. By Theorem~\ref{answer} the rank of $\mathrm K(n)^*(\mathrm{SO}_m)$ equals $2^{2^n-1}$. On the other hand, the rank of $\mathrm K(n)^*\left(\overline X\right)$ equals $2^{\lfloor(m-1)/2\rfloor}$ which is strictly greater than $2^{2^n-1}$. In particular, there are at least two summands in the motivic decomposition~\eqref{decompGB}.
\end{proof}

As another corollary of Theorem~\ref{answer} we also obtain a similar result about spin groups.

\begin{cl}
\label{kn-spin}
For the group $\mathrm{Spin}_m$ with $m\leq2^{n+1}$, $n\geq1$, the ring $\mathrm K(n)^*(\mathrm{Spin}_m)$ is non-canonically isomorphic to the ring $\mathrm{CH}^*(\mathrm{Spin}_m;\,\mathbb F_2[v_n^{\pm1}])$. For $m\geq 2^{n+1}+1$ the canonical map $$\mathrm K(n)^*(\mathrm{Spin}_m)\rightarrow\mathrm K(n)^*(\mathrm{Spin}_{m-2})$$ is an isomorphism.
\end{cl}
\begin{proof}
It follows from the diagram
$$
\xymatrix{
\mathrm K(n)(\mathrm BT_{\mathrm{SO}_m})\ar[r]^{\!\!\mathfrak c}\ar[d]&\mathrm K(n)(\mathrm{SO}_m/B)\ar[r]\ar@{=}[d]&\mathrm K(n)(\mathrm{SO}_m)\ar[d]\\
\mathrm K(n)(\mathrm BT_{\mathrm{Spin}_m})\ar[r]^{\!\!\!\mathfrak c}&\mathrm K(n)(\mathrm{Spin}_m/B)\ar[r]&\mathrm K(n)(\mathrm{Spin}_m)
}
$$
(see Subsection~\ref{characteristic}) that $\mathrm K(n)(\mathrm{Spin}_m)$ coincides with the quotient of $\mathrm K(n)(\mathrm{SO}_m)$ modulo the ideal generated by the image of $\mathfrak c\big(\tau^1\mathrm K(n)(\mathrm BT_{\mathrm{Spin}_m})\big)$ in $\mathrm K(n)(\mathrm{SO}_m)$. Obviously, the ideal generated by $\mathfrak c\big(\tau^1\mathrm K(n)(\mathrm BT_{\mathrm{Spin}_m})\big)$ coincides with the ideal generated by $\mathfrak c(\varpi_l)$ and $\mathfrak c\big(\tau^1\mathrm K(n)(\mathrm BT_{\mathrm{SO}_m})\big)$, where $\varpi_l$ denotes the last fundamental weight. Our choice of $\widetilde e_1$ in Theorem~\ref{connective} guaranties that $\mathfrak c(\varpi_l)$ goes to ${e_1\in\mathrm K(n)(\mathrm{SO}_m)}$, therefore,
$$
\mathrm K(n)(\mathrm{Spin}_m)=\mathrm K(n)(\mathrm{SO}_m)/e_1=\mathbb F_2[v_n^{\pm1}][e_{3},\ldots,e_{2r-1}]/(e_{2i-1}^{2^{k_i}}),
$$
where $r=\lfloor\frac{m+1}{4}\rfloor$, $k_i=\lfloor\mathrm{log}_2(\frac{m-1}{2i-1})\rfloor$ for $m\leq2^{n+1}$, and $r=2^{n-1}$, $k_i=\lfloor\mathrm{log}_2(\frac{2^{n+1}-1}{2i-1})\rfloor$ for $m\geq2^{n+1}+1$. Now, the stabilization is clear, and the result for small $m$ follows from the analogous result on $\mathrm{CH}^*(\mathrm{Spin}_m)$, see~\cite[Table~II]{Kac}.
\end{proof}

\section{Application: Morava K-theory motives of projective quadrics}
\label{nikita}

\subsection{Rational projectors} 
\label{sec-heightn}

As before we denote by $\mathrm K(n)^*$ the Morava K-theory with the coefficient ring $\mathbb F_2[v_n^{\pm1}]$. In this section we always assume that $n\geq2$. Our objective is to describe a decomposition of the $\mathrm K(n)$-motive $\mathcal M_{\mathrm K(n)}(Q)$ of a {\it generic} quadric $Q$ into indecomposable summands. We start with the following proposition.

\begin{prop}
\label{isdec}
Let $Q$ be a smooth projective quadric {\rm(}not necessarily generic{\rm)} of dimension $D\geq2^n-1$, $n>1$. Then its $\mathrm K(n)$-motive $\mathcal M(Q)$ decomposes as a direct sum of $D-2^n+2$ Tate motives and a motive $\mathcal N$ of rank $2^n$ for $D$ even or $2^n-1$ for $D$ odd. 
\end{prop}
\begin{proof}
Let us denote $D'=D-2^n+1$. Then using Proposition~\ref{mod2} we conclude that $\pi_i=v_n^{-1}\,h^i\times h^{D'-i}$ for $0\leq i\leq D'$ is a system of $D'+1$ rational orthogonal projectors corresponding to Tate summands.
\end{proof}

We will show that the remaining summand $\mathcal N$ is in general indecom\-posable. For a split quadric $Q$ we can give an explicit decomposition of $\mathcal N$ into Tate summands in terms of orthogonal projectors. Consider the basis $h^i$, $l_i$, $0\leq i\leq d$ of the ring $\mathrm K(n)^*(Q)$, where $D=\mathrm{dim}\,Q$ is equal to $2d$ or $2d+1$ defined in Proposition~\ref{table}, and let us denote $h^i=0=l_i$ for $i<0$.

\begin{prop}
\label{diagonal}
We use the following notation: 
\begin{align*}
D'&=D-2^n+1,\\
d'&=D'-d
\end{align*}
for $n>1$. Then the following statements hold.
\begin{enumerate}[{\rm (i)}]
\item
The diagonal $\Delta\in\mathrm K(n)^*(Q\times Q)$ is equal to
\begin{align*}
\Delta=\sum_{i=0}^d\big(h^i\times l_i+l_i\times h^i\big)+v_n\sum_{i=d'}^dl_i\times l_{D'-i}\,+\\+\delta_{\,0,\,D\ \!\mathrm{mod}\,4}\cdot (h^d+v_nl_{d'})\times(h^d+v_nl_{d'}),
\end{align*}
where $\delta_{\,0,\,D\ \!\mathrm{mod}\,4}=\begin{cases}0,&\text{ if }D\not\equiv0\!\mod4;\\1,&\text{ if }D\equiv0\!\mod4.\end{cases}$ 
\item
The projectors $\pi_i=v_n^{-1}\cdot h^i\times h^{D'-i}$ for $0\leq i\leq D'$ together with
$$
\varpi_j=(h^j+v_nl_{D'-j})\times(l_j+v_n^{-1}h^{D'-j})
$$ 
for $d'\leq j\leq d-1$ and
$$
\varpi_d=\big(h^d+v_nl_{d'}\big)\times\big(l_d+v_n^{-1}h^{d'}+\delta_{\,0,\,D\ \!\mathrm{mod}\,4}\cdot(h^d+v_nl_{d'})\big)
$$
define a decomposition of $\Delta$ into a sum of $2d+2$ orthogonal Tate motives. Observe that $\pi_i=h^i\times l_{i}$ and $\pi_{D'-i}=l_i\times h^i$ for $i<d'$.
\end{enumerate}
\end{prop}

The proof is straightforward. 

\begin{xm}
Consider the case $n=2$ and $D=3$. Then $D'=0$, $d'=-1$, and $\Delta$ is given by
$$
\Delta=1\times l_0+l_0\times1+h\times l_1+l_1\times h+v_2\cdot l_0\times l_0.
$$
The summands $l_{-1}\times l_1$ and $l_{1}\times l_{-1}$ denote zero according to our convention.
\end{xm}

\subsection{Computation of the co-action}

We denote by $G$ the split group $\mathrm{SO}_m$ and by $P_1$ the maximal parabolic subgroup corresponding to the first simple root in the Dynkin diagram. Let $Q=G/P_1$ denote the {\it split} quadric of dimension $m-2=2d$ or $m-2=2d+1$.

We will describe the co-action
$$
\rho\colon \mathrm K(n)^*(Q)\rightarrow \mathrm K(n)^*(G)\otimes_{\mathbb F_2[v_n^{\pm1}]}\mathrm K(n)^*(Q)
$$
of $\mathrm K(n)^*(G)$ on $\mathrm K(n)^*(Q)$, see Section~\ref{torsor}.  Observe that since $\mathrm K(n)^*(Q)$ is generated as an algebra by the elements $h$ and $l=l_d$ in the notation of Proposition~\ref{table} and since $\rho(h)=1\otimes h$ by~\cite[Lemma~4.12]{PShopf}, the co-action is determined by $\rho(l)$.

It is convenient here to work with the {\it connective} Morava K-theory $\mathrm{CK}(n)^*$. Sending $v_n$ to $0$ we obtain a surjective map from the connective Morava K-theory onto the Chow theory. 

Assume that $m\leq2^{n+1}$, $n>1$ and recall that 
$$
\mathrm{CK}^*(n)(\mathrm{SO}_m)\cong\mathbb F_2[v_n][e_1,\ldots,e_{\lfloor\frac{m-1}{2}\rfloor}]/(e_i^2=e_{2i}),
$$
 where $\mathrm{codim}\,e_i=i$ by Theorem~\ref{connective}, and denote $s=\left\lfloor\frac{m-1}{2}\right\rfloor$ and $r=\left\lfloor\frac{m+1}{4}\right\rfloor$. 
 
 As in the previous section by an {\it index set} we mean an $r$-tuple $I=(d_1,\ldots,d_r)$ with $0\leq d_i<2^{k_i}$ and $k_i=\lfloor\mathrm{log}_2(\frac{m-1}{2i-1})\rfloor$. For an index set $I$ let $e_I$ denote $\prod_{i=1}^re_{2i-1}^{d_i}$, thus, $\{e_I\mid I\text{ an index set\,}\}$ is an $\mathbb F_2[v_n]$-basis of $\mathrm{CK}^*(n)(\mathrm{SO}_m)$ (cf. Theorem~\ref{connective}). 
\begin{lm}
\label{co-action2}
Assume that $m\leq 2^n$, $n>1$. Then the co-action of $\mathrm{K}(n)^*(G)$ on $\mathrm{K}(n)^*(Q)$ is given by the equation
$$
\rho(l)=\sum_{i=1}^{s}e_i\otimes h^{s-i}+1\otimes l +v_n\sum_{} e_I\otimes q_I
$$
for some $q_I\in\mathrm{K}(n)^*(Q)$, where the last sum is taken over $e_I\neq e_i$ for $1\leq i\leq s$.
\end{lm}
\begin{proof}
It follows from~\cite[Lemma~7.2]{PShopf} that
$$
\rho_{\mathrm{CK}(n)}(l)=\sum_{i=1}^{s}e_i\otimes h^{s-i}+1\otimes l+v_n\sum e_I\otimes q_I
$$
for some homogeneous $q_I\in\mathrm{CK}(n)^*(Q)\setminus0$. Observe that the codimension of $q_I$ is at most $m-2$. Since $\rho_{\mathrm{CK}(n)}(l)$ is homogeneous of codimension $s$, none of such $e_I$ can be equal to $e_{2i-1}^{2^x}$ for some $x\leq{k_i}-1$ by dimensional reasons. Now the result about $\mathrm K(n)^*$ follows.
\end{proof}

\begin{lm}
\label{co-action}
Let $Q=Q_{2^n-1}$ be a split $(2^n-1)$-dimensional quadric, $n>1$.
The co-action of $\mathrm{K}(n)^*(\mathrm{SO}_{2^n+1})$ on $\mathrm{K}(n)^*(Q_{2^n-1})$ is given by
\begin{align*}
\rho(l)=\sum_{i=1}^{2^{n-1}}e_i\otimes h^{2^{n-1}-i}+1\otimes l+v_n\,e_{2^{n-1}}\otimes l_0+v_n\sum_{}e_I\otimes q_{I}
\end{align*}
for some $q_I\in\mathrm{K}(n)^*(Q)$, where the last sum is taken with $e_I\neq e_i$ for all $1\leq i\leq s$.
\end{lm}
\begin{proof}
As in Lemma~\ref{co-action2} we have that
$$
\rho_{\mathrm{CK}(n)}(l)=\sum_{i=1}^{2^{n-1}} e_i\otimes h^{2^{n-1}-i}+1\otimes l+v_n\sum e_I\otimes q_I
$$
for some $q_I\in\mathrm{CK}(n)(Q)\setminus0$. Since $\rho_{\mathrm{CK}(n)}(l)$ is homogeneous of codimension $2^{n-1}$, by dimensional reasons there exists only one possibility
when $e_I$ in the sum above is equal to some $e_s$, more precisely, the case $e_I= e_{2^{n-1}}$ cannot be excluded. Moreover, we will show that such a summand indeed appears.

So, the $\mathrm K(n)$-co-action is determined by the formula
\begin{align*}
\rho(l)=\sum_{i=1}^{2^{n-1}}e_i\otimes h^{2^{n-1}-i}+1\otimes l+c\cdot v_n\,e_{2^{n-1}}\otimes l_0+v_n\sum_{|I|>1}e_I\otimes q_{I}
\end{align*}
for some $c\in\mathbb F_2$ and $q_{I}\in\mathrm K(n)^*(Q)$. We claim that $c=1$.

Let $E$ be a generic $\mathrm{SO}_{2^n+1}$-torsor. Recall that a motivic decomposition of $E/P_1$ provides a decompo\-sition of $A^*(G/P_1)$ into a sum of $A^*(G)$-comodules. 

Since $\pi=v_n^{-1}\cdot1\otimes1$ is a projector in $\mathrm K(n)^*(Q_{2^n-1})\otimes\mathrm K(n)^*(Q_{2^n-1})$, and the diagonal is equal to
$$
\Delta=\sum_{i=0}^{2^{n-1}-1}(h^i\otimes l_i+l_i\otimes h^i)+v_n\cdot l_0\otimes l_0,
$$
see Proposition~\ref{diagonal}, the projector
$$
\Delta-\pi=\sum_{i=1}^{2^{n-1}-1}(h^i\otimes l_i+l_i\otimes h^i)+(1+v_nl_0)\otimes(v_n^{-1}+l_0)
$$
defines a summand of $\mathcal M_{\mathrm K(n)}(E/P_1)$. Then using~\cite[Theorem~4.14]{PShopf} and~\cite[Remark~4.15]{PShopf} we conclude that the realization functor maps the motivic summands $\mathcal M_1=(\mathcal M(E/P),\,\pi)$ and $\mathcal M_2=(\mathcal M(E/P),\,\Delta-\pi)$ to the $\mathrm K(n)^*(\mathrm{SO}_{2^n+1})$-subcomodules $C_1=\mathrm K(n)^*(\overline{\mathcal M_1})=\mathbb F_2[v_n^{\pm1}]\cdot 1$ and 
$$
C_2=\mathrm K(n)^*(\overline{\mathcal M_2})=\mathbb F_2[v_n^{\pm1}]\cdot(1+v_nl_0)\oplus\bigoplus_{i=1}^{2^{n-1}-1}(\mathbb F_2[v_n^{\pm1}]\cdot h^i\oplus\mathbb F_2[v_n^{\pm1}]\cdot l_i)
$$
of $\mathrm K(n)^*(Q)$, respectively (here $\,\overline{\phantom{a\,}}\,$ denotes the restriction map $\mathrm{res}_{\,\overline k/k}$,  see Subsection~\ref{torsor}). Since $\rho(l)$ belongs to the submodule $\mathrm K(n)^*(\mathrm{SO}_{2^n+1})\otimes C_2$, it remains to collect the terms of $\rho(l)$ lying in  $\mathbb F_2[v_n^{\pm1}]e_{2^{n-1}}\otimes\mathrm K(n)^*(Q)$ to conclude that $c=1$.
\end{proof}

\subsection{Motive of a generic quadric}

We proved in Proposition~\ref{isdec} that the $\mathrm K(n)$-motive ($n>1$) of any smooth projective quadric of dimension $D\geq 2^n-1$ has $D+2-2^n$ Tate summands. In the present subsection we show that the remaining summand of a {\it generic} quadric is indecomposable. This reproves, in particular, that the Chow motive of a generic quadric is indecomposable, see~\cite{Vlect,Ksuff}. As before $G$ denotes $\mathrm{SO}_m$, $P_1$ denotes the first maximal parabolic subgroup, and $E$ denotes a generic torsor.

\begin{lm}
\label{ind}
For $n>1$ the $\mathrm K(n)$-motive $\mathcal M$ of a generic quadric $E/P_1$ of dimension $D=m-2$ such that $0<D\leq2^n-2$ is indecomposable.
\end{lm}
\begin{proof}
Assume that $\mathcal M$ decomposes as a direct sum $\mathcal M=\mathcal M_1\oplus\mathcal M_2$. Then by~\cite[Theorem~4.14]{PShopf} and~\cite[Remark~4.15]{PShopf} the realization $\mathrm K(n)^*(\overline{\mathcal M})$ of the motive $\mathcal M$ over $\overline k$ is a direct sum of the realizations $C_1=\mathrm K(n)^*(\overline{\mathcal M_1})$ and $C_2=\mathrm K(n)^*(\overline{\mathcal M_2})$ of the motives $\mathcal M_1$ and $\mathcal M_2$ over $\overline k$ (see Subsection~\ref{torsor}).

Recall that $s=\lfloor\frac{m-1}{2}\rfloor$ equals $d$ for $D=2d$ even, and $s=d+1$ for $D=2d+1$ odd. Since $C_i$ are graded and the dimension of the quadric is small, we can assume that either $l\in C_1$ or $l+h^d\in C_1$. Since $C_1$ is a sub-comodule, we conclude that $\rho(l)$ or $\rho(l+h^d)$ belongs to $\mathrm K(n)^*(\mathrm{SO}_{2^n+1})\otimes C_1$. Using the description of $\rho(l)$ from Lemma~\ref{co-action2} and collecting the terms lying in $\mathbb F_2[v_n^{\pm1}]e_{1}\otimes\mathrm K(n)^*(Q)$ we conclude that $h^{s-1}\in C_1$.

But similarly, each $l_k$ for $0\leq k\leq d-1$ lies in one of $C_i$'s, and since $C_i$'s are sub-comodules, $\rho(l_k)$ belongs to the submodule $\mathrm K(n)^*(\mathrm{SO}_{2^n+1})\otimes C_i$. Then the description of $\rho(l_k)$ from Lemma~\ref{co-action2} implies that $h^{s-1}\in C_i$. However, the sum is direct, so that all $l_k$ lie in $C_1$. 

Next, using the description of $\rho(l)$ we conclude that all $h^k$ for $0\leq k\leq s-1$ lie in $C_1$. This finishes the proof for $D$ odd, and for $D$ even it remains to use the description of $\rho(l_{d-1})$ to show that $h^d\in C_1$. Therefore, $C_1=\mathrm K(n)^*(\overline{\mathcal M})$, $C_2=0$, and $\mathcal M$ is indecomposable.
\end{proof}

\begin{lm}
\label{ind2}
The $\mathrm K(n)$-motive $$\mathcal M=(\mathcal M(E/P_1),\,\Delta-v_n^{-1}\cdot1\otimes1)$$ for a generic $(2^n-1)$-dimensional quadric is indecomposable $($for $n>1)$.
\end{lm}
\begin{proof}
The proof of Lemma~\ref{ind} can be repeated with the use of Lemma~\ref{co-action} instead of Lemma~\ref{co-action2}.
\end{proof}

\begin{lm}
\label{indec}
A generic odd-dimensional projective quadric $E/P_1$ of any dimension $\geq2^n-1$ has an indecomposable $\mathrm K(n)$-motivic summand of rank $2^n-1$. A generic even-dimensional projective quadric $E/P_1$ of any dimension $\geq2^n-2$ has an indecomposable $\mathrm K(n)$-motivic summand of rank $2^n$ $($for $n>1)$.
\end{lm}
\begin{proof}
Let $N=2^n-1$ or $N=2^n$ depending on the dimension of the quadric and assume that all indecomposable summands of the $\mathrm K(n)$-motive of $E/P_1$ have rank $<N$. We can assume that $E/P_1$ with this property has the least possible dimension $\mathrm{dim}\,(E/P_1)\geq N$. By Lemmata~\ref{ind} and~\ref{ind2}, a generic quadric of dimension up to $2^n-2$ or $2^n-1$ over any field of characteristic $0$ has an indecomposable summand of rank $N$, in particular, $\mathrm{dim}\,(E/P_1)>2^n-1$.

Let $C$ denote the commutator subgroup $[L_1,L_1]$ of the Levi subgroup $L_1$ of the parabolic subgroup $P=P_1$ in $G$ and let $F$ denote a generic $C$-torsor. In fact, $C=\mathrm{SO}_{m-2}$ and we can assume that $F$ is a $C_{L}$-torsor over a field extension $L/k$ such that $F$ and $E_L$ define the same twisted forms of $G$ over $L$. 

Then the {Chow} motive of $\!\,_{F\,}Q_{m-2}$ is isomorphic to a sum of two Tate motives and a (shifted) motive of a {\it generic} projective quadric of dimension $m-4$,
$$
\mathcal M_{\mathrm{CH}}(\!\,_{F\,}Q_{m-2})\cong \mathcal M_{\mathrm{CH}}(\mathrm{pt})\oplus\mathcal M_{\mathrm{CH}}(\!\,_{F\,}(\mathrm{SO}_{m-2}/P_1))\{1\}\oplus\mathcal M_{\mathrm{CH}}(\mathrm{pt})\{m-2\},
$$
cf. the proof of~\cite[Lemma~7.2]{PShopf}. Therefore, the same is true for the cobordism motive of $\!\,_{F\,}Q_{m-2}$ by~\cite{VY} and for the $\mathrm K(n)$-motive. 

However, by our assumptions (and by Lemmata~\ref{ind} and~\ref{ind2}) the motive $\mathcal M_{\mathrm{K}(n)}\big(\!\,_{F\,}(\mathrm{SO}_{m-2}/P_1)\big)$ has an indecomposable summand of rank $N$. Therefore, $\mathcal M_{\mathrm{K}(n)}(\!\,_{F\,}Q_{m-2})$ has an indecomposable summand of rank $N$ as well. 
\end{proof}
 
Now Proposition~\ref{isdec} and Lemmata~\ref{ind} and~\ref{indec} give us in a certain sense upper and lower bounds for the size of an indecomposable summand in the motive of a generic quadric. 
This proves the following

\begin{tm}
\label{result}
Let $Q$ be a generic quadric of positive dimension $D=2d$ or $D=2d+1$. For $n>1$ consider the $n$-th Morava K-theory $\mathrm K(n)^*$. If $D<2^n-1$, then the corresponding motive of $Q$ is indecomposable, and if $D\geq2^n-1$, then motive of $Q$ decomposes into a direct sum of $D+2-2^n$ Tate motives and an indecomposable summand $\mathcal N$ of rank $2^n$ for $D$ even or $2^n-1$ for $D$ odd. 
\end{tm}

We remark that the assumptions $n>1$ on the Morava K-theory $\mathrm K(n)^*$ and $D>0$ on the dimension of the quadric from Theorem~\ref{result} are important. Indeed, the $0$-dimensional quadric $Q_0$ defined by the quadratic form $\varphi$ is just a quadratic field extension $\mathrm{Spec}\,k(\sqrt{\mathrm{disc(\varphi)}})$, and for any $\mathrm{SO}_2$-torsor we obtain a quadric with a trivial discriminant. This implies that $\mathrm K(n)^*(Q_0)$ is isomorphic to ${\mathrm K(n)^*(\mathrm{pt}\sqcup\mathrm{pt})=\mathrm K(n)^*(\mathrm{pt})\oplus\mathrm K(n)^*(\mathrm{pt})}$, in particular, $l=l_0$ is rational and the motive is split. In terms of the co-action we have ${\mathrm K(n)^*(\mathrm{SO_2})=\mathrm K(n)^*(\mathbb G_{\mathrm m})=\mathbb F_2[v_n^{\pm1}]}$ and $\rho(l)=1\times l$. This does not prevent the motive from being decomposable.

In particular, for $\mathrm K(1)$ (and similarly for $\mathrm K^0$) we do not have the base of induction in the proof of Lemma~\ref{indec}, since $2^1-2=0$.


\begin{thebibliography}{KMRT}
\bibitem[Bas]{Bass} H.\,Bass, ``Algebraic K-Theory'', {\it Mathematics lecture note series}, W. A. Benjamin (1968) 1--762.
\bibitem[BGG]{BGG} J.\,Bernstein, I.\,Gelfand, S.\,Gelfand, ``Schubert cells and cohomology of the spaces $G/P$'', {\it Uspekhi Mat. Nauk} {\bf28}:3 (171) (1973), 3--26; {\it Russian Math. Surveys} {\bf28}:3 (1973), 1--26.
\bibitem[BjBr]{BBcox} A.\,Bj\"orner, F.\,Brenti, ``Combinatorics of Coxeter Groups'', {\it Graduate Texts in Mathematics} Springer (2000) 1--370.
\bibitem[Br03]{Brost} P.\,Brosnan, ``A short proof of Rost nilpotence via refined correspondences'', {\it Doc. Math.} {\bf8} (2003) 69--78.
\bibitem[Br05]{Bdec} P.\,Brosnan, ``On motivic decompositions arising from the method of Bia{\l}ynicki-Birula'', {\it Invent. Math.} {\bf161} (2005) 91--111.
\bibitem[CPZ]{CPZ} B.\,Calm\`es, V.\,Petrov, K.\,Zainoulline, ``Invariants, torsion indices and oriented cohomology of complete flags'', {\it Annales Scientifiques de l'Ecole Normale Superieure} {\bf46}:3 (2013) 405--448.
\bibitem[CPSZ]{CPSZ} B.\,Calm\`es, V.\,Petrov, N.\,Semenov, K.\,Zainoulline, ``Chow motives of twisted flag varieties'', {\it Compos. Math.} {\bf142}:4 (2006) 1063-–1080.
\bibitem[CZZ]{CZZ} B.\,Calm\`es, K.\,Zainoulline, C.\,Zhong, ``Equivariant oriented cohomology of flag varieties'', {\it Documenta Math.} Extra Volume: Alexander S. Merkurjev's Sixtieth Birthday (2015) 113--144.
\bibitem[CGM]{CGM} V.\,Chernousov, S.\,Gille, A.\,Merkurjev, ``Motivic decomposition of isotropic projective homogeneous varieties'', {\it Duke Math. J.} {\bf126} (2005) 137--159.
\bibitem[ChMe]{ChM} V.\,Chernousov, A.\,Merkurjev, ``Motivic decomposition of projective homogeneous varieties and the Krull--Schmidt theorem'', {\it Transformation Groups} {\bf11} (2006) 371--386.
\bibitem[CuRe]{CR} C.\,W.\,Curtis, I.\,Reiner, ``Methods of representation theory, vol. I'', {\it Wiley Classics Library} {\bf26}, Wiley (1990) 1--848.
\bibitem[Dem]{Dem} M.\,Demazure, ``D\'esingularisation des vari\'et\'es de Schubert g\'en\'eralis\'ees'', {\it Ann. Sci. \'Ec.\,Norm.\,Sup.} {\bf7}:4 (1974) 53--88.
\bibitem[DoZh]{DoZh} J.\,M.\,Douglass, C.\,Zhong, ``The Leray--Hirsch Theorem for equivariant oriented cohomology of flag varieties'' (2020) 1--17 {\tt arXiv:2009.05902}.
\bibitem[DrTy]{DT} E.\,Drellich, J.\,Tymoczko, ``A Module Isomorphism Between $\mathrm H_T(G/P)\otimes\mathrm H_T(P/B)$ and $\mathrm H_T(G/B)$'', {\it Communications in Algebra} {\bf 45}:1 (2017), 17--28.
\bibitem[EdGr]{EG} D.\,Edidin, W.\,Graham, ``Characteristic classes in the Chow ring'', {\it J.~Alg.~Geom.} (1994) 1--12.
\bibitem[EKM]{EKM} R.\,Elman, N.\,Karpenko, A.\,Merkurjev, ``The Algebraic and Geometric Theory of Quadratic Forms'' {\it Colloquium Publications} {\bf56} (2008) 1--435.
\bibitem[GMS]{GSM} S.\,Garibaldi, A.\,Merkurjev, J-P.\,Serre, ``Cohomological invariants in Galois cohomology'', AMS, Providence, RI (2003) 1--168.
\bibitem[GPS]{GPSshells} S.\,Garibaldi, V.\,Petrov, N.\,Semenov, ``Shells of twisted flag varieties and the Rost invariant'', {\it Duke Math. J.} {\bf 165}:2 (2016) 285--339.
\bibitem[GaS]{GS10} S.~Garibaldi, N.~Semenov, ``Degree $5$ invariant of $\mathrm E_8$'', {\it Int. Math. Res. Not.} 2010, no.~19, 3746--3762. 
\bibitem[Gi10]{Grost} S.\,Gille, ``The Rost nilpotence theorem for geometrically rational surfaces'', {\it Invent. Math.} {\bf181} (2010) 1--19.
\bibitem[Gi14]{Grost2} S.\,Gille, ``On Chow motives of surfaces'', {\it J. reine angew. Math.} {\bf686} (2014) 149--166.
\bibitem[GiVi]{GV} S.\,Gille, A.\,Vishik, ``Rost Nilpotence and Free Theories'', {\it Doc. Math.} {\bf23} (2018) 1635--1657.
\bibitem[GiZa]{GZ} S.\,Gille, K.\,Zainoulline,~``Equivariant pretheories and invariants of torsors'', {\it Transformation Groups} {\bf17} (2012) 471--498.
\bibitem[Gro]{Gr} A.\,Grothendieck, ``Torsion homologique et sections rationnelles'', {\it S\'eminaire Claude Chevalley} {\bf3}:5 (1958) 1--29.
\bibitem[Har]{Har} R.\,Hartshorne, ``Algebraic Geometry'', Berlin, New York: Springer-Verlag (1977) 1--496.
\bibitem[Hau]{Hau} O.\,Haution, ``Lifting of Coefficients for Chow Motives of Quadrics'', {\it Developments in Mathematics} {\bf18} (2010) 239--247.
\bibitem[Haz]{Haz} M.\,Hazewinkel, ``Formal groups and applications'', Academic Press, Inc. London (1978) 1--573.
\bibitem[Kac]{Kac} V.\,Kac, ``Torsion in cohomology of compact Lie groups and Chow rings of reductive algebraic groups'', {\it Invent. Math.} {\bf80} (1985) 69--79.
\bibitem[Ka03]{Kfirst} N.\,Karpenko, ``On the first Witt index of quadratic forms'', {\it Invent. Math.} {\bf153}:2 (2003) 455--462.
\bibitem[Ka04]{Kholes} N.\,Karpenko, ``Holes in $I^n$'', {\it Ann. Sci. \'Ecole Norm. Sup\'er.} (4) {\bf37}:6 (2004) 973--1002.
\bibitem[Ka12]{Ksuff} N.\,Karpenko, ``Sufficiently generic orthogonal Grassmannians'', {\it J. Algebra} {\bf372} (2012) 365--375.
\bibitem[Ka18]{Kgen} N.\,Karpenko, ``On generic quadratic forms'', {\it Pacific Journal of Mathematics} {\bf297}:2 (2018) 367--380.
\bibitem[Ka20]{KarPS} N.\,Karpenko, ``Extended Petrov–Semenov’s connections'', {\it Archiv der Mathematik} {\bf115} (2020) 27--34.
\bibitem[KaMe]{KRost} N.\,Karpenko, A.\,Merkurjev, ``Rost projectors and Steenrod operations'', {\it Documenta Mathematica} {\bf7} (2002) 481--493.
\bibitem[KaZh]{KZh} N.\,Karpenko, M.\,Zhykhovich, ``Isotropy of unitary involutions'', {\it Acta Math.} {\bf211}:2 (2013) 227--253.
\bibitem[KMRT]{KMRT} M-A.\,Knus, A.\,Merkurjev, M.\,Rost, J-P.\,Tignol, ``The book of involutions'', {\it Colloquium Publications} {\bf44}, AMS (1998) 1--570.
\bibitem[Lam]{Lfc} T.\,Y.\,Lam, ``A First course in noncommutative rings'', {\it Graduate Texts in Mathematics} {\bf131}, Springer-Verlag New York (2001) 1--388.
\bibitem[Lav]{LavPhD} A.\,Lavrenov, ``Morava motives of projective quadrics'', Dissertation, Ludwig-Maximilians-Universit{\"a}t M{\"u}nchen (2020).
\bibitem[Le93]{Le93} M.\,Levine, ``The algebraic $K$-theory of the classical groups and some twisted forms'', {\it Duke Math. J.} {\bf 70}:2 (1993) 405--443.
\bibitem[Le07]{Lev} M.\,Levine, ``Steenrod Operations, Degree Formulas and Algebraic Cobordism'', {\it Pure and Applied Mathematics Quarterly} {\bf3}:1 (Special Issue: In honor of Robert MacPherson, Part 3 of 3) 2007, 283--306.
\bibitem[Le09]{Ldiag} M.\,Levine, ``Comparison of cobordism theories'', {\it J. Algebra} {\bf322}:9 (2009) 3291--3317.
\bibitem[LeMo]{LM} M.\,Levine, F.\,Morel, ``Algebraic Cobordism'' {\it Springer Monographs in Mathematics}, Springer-Verlag Berlin Heidelberg (2007) 1--246.
\bibitem[LePa]{LP} M.\,Levine, R.\,Pandharipande, ``Algebraic cobordism revisited'', {\it Inventiones mathematicae} {\bf176} (2009) 63--130.
\bibitem[LeTr]{LT} M.\,Levine, G.\,Tripathi, ``Quotients of {\rm\bf MGL}, their slices and their geometric parts'', {\it Doc. Math.}, Extra volume: Alexander S. Merkurjev’s Sixtieth Birthday (2015) 407--442.
\bibitem[Ma68]{Manin} Yu.\,Manin, ``Correspondences, motifs and monoidal transformations'', {\it Mathematics of the USSR-Sbornik} {\bf6}:4 (1968) 439--470.
\bibitem[Ma74]{Ma74} R.\,Marlin, ``Anneaux de Chow des groupes alg\'ebriques $\mathrm{SO}(n)$, $\mathrm{Spin}(n)$, $\mathrm G_2$ et $\mathrm F_4$'', Publications Math. d’Orsay, no.~95-7419 (1974).
\bibitem[Me97]{Me97} A.\,Merkurjev, ``Comparison of the equivariant and the standard $K$-theory of algebraic varieties'', {\it Algebra i Analiz} {\bf 9}:4 (1997) 175--214.
\bibitem[Me02]{Mer} A.\,Merkurjev, ``Algebraic oriented cohomology theories'', {\it Algebraic number theory and algebraic geometry} Contemp. Math. {\bf300}, Amer. Math. Soc., Providence, RI, (2002) 171--193.
\bibitem[MiMo]{MilnorMoore} J.\,Milnor, J.\,Moore, ``On the Structure of Hopf Algebras'', {\it Annals of Mathematics} {\bf81}:2 (1965) 211--264.
\bibitem[NeZa]{NZ} A.\,Nenashev, K.\,Zainoulline, ``Oriented cohomology and motivic decompositions of relative cellular spaces'', {\it Journal of Pure and Applied Algebra} {\bf205}:2 (2006) 323--340.
\bibitem[Nis]{Nis} T.\,Nishimoto, ``Higher torsion in the Morava K-theory of $\mathrm{SO}(m)$ and $\mathrm{Spin}(m)$'', J. Math. Soc. Japan {\bf53}:2 (2001) 383--394.
\bibitem[OVV]{OVV} D.\,Orlov, A.\,Vishik, V.\,Voevodsky, ``An exact sequence for $\mathrm K^{\mathrm M}_*/2$ with applications to quadratic forms'', {\it Annals of Mathematics} {\bf165}:1 (2007) 1--13.
\bibitem[Pa94]{Pkt} I.\,Panin, ``On the algebraic K-theory of twisted flag varieties'', {\it K-Theory} {\bf8} (1994) 541--585.
\bibitem[Pa02]{Prr} I.\,Panin, ``Riemann--Roch theorem for oriented cohomology'' (2002); {\tt http://www.pdmi.ras.ru/preprint/2002/02-06.html}.
\bibitem[PaSm]{PS} I.\,Panin, A.\,Smirnov, ``Riemann--Roch Theorems for Oriented Cohomology'' {\it Axiomatic, Enriched and Motivic Homotopy Theory} {\bf131} (2004) 261--333.
\bibitem[PS10]{PS10} {V.\,Petrov, N.\,Semenov}, ``Generically split projective homogeneous varieties'', {\em Duke Math. J.} {\bf 152} (2010) 155--173.
\bibitem[PS12]{PSadd} V.\,Petrov, N.\,Semenov, ``Generically split projective homogeneous varieties, II'', {\it Journal of {\rm K}-theory} {\bf10}:1 (2012) 1--8.
\bibitem[PS14]{Log} V.\,Petrov, N.\,Semenov, ``Morava K-theory of twisted flag varieties'' (2014) 1--15; {\tt arXiv:1406.3141}.
\bibitem[PS17]{PSrost} V.\,Petrov, N.\,Semenov, ``Rost motives, affine varieties, and classifying spaces'', {\it Journal of the London Mathematical Society} {\bf95}:3 (2017) 895--918.
\bibitem[PS21]{PShopf} V.\,Petrov, N.\,Semenov, ``Hopf-theoretic approach to motives of twisted flag varieties'', {\it Comp. Math.} {\bf 157}:5 (2021) 963--996.
\bibitem[PSZ]{PSjinv} V.\,Petrov, N.\,Semenov, K.\,Zainoulline, ``$J$-invariant of linear algebraic groups'', {\it Ann. Sci. \'Ecole Norm. Sup.} {\bf41} (2008) 1023--1053.
\bibitem[Ra89]{Rao} V.\,Rao, ``The bar spectral sequence converging to $h_*(\mathrm{SO}(2n+1))$'', {\it Manuscr. Math.} {\bf65}:1 (1989) 47--61.
\bibitem[Ra90]{R90} V.\,Rao, ``On the Morava K-theories of $\mathrm{SO}(2n+1)$'', {\it Proc. of AMS} {\bf108}:4 (1990) 1031--1038.
\bibitem[Rav]{Rav} D.\,Ravenel, ``Complex cobordism and stable homotopy groups of spheres'', {\it Bull. Amer. Math. Soc.} (N.S.) {\bf18}:1 (1988) 88--91.
\bibitem[Ros]{Rost} M.\,Rost, ``The motive of a Pfister form'', {\it preprint} (1998); {\tt www.math.uni-bielefeld.de/$\mathtt{\sim}$rost/data/motive.pdf}.
\bibitem[Se17]{Se1} P.\,Sechin, ``Chern Classes from Algebraic Morava K-Theories to Chow Groups'', {\it International Mathematics Research Notices} {\bf2018}:15 (2018) 4675--4721.
\bibitem[Se18]{Se2} P.\,Sechin, ``Chern classes from Morava K-theories to $p^n$-typical oriented theories'' (2018) 1--44; {\tt arXiv:1805.09050}.
\bibitem[SeS]{SeSe} P.\,Sechin, N.\,Semenov, ``Applications of the Morava $K$-theory to algebraic groups'', {\it Ann. Sci. \'Ec. Norm. Sup.} {\bf 54} (2021), no. 4, 945--990. 
\bibitem[S16]{S16} N. Semenov, ``Motivic construction of cohomological invariants'', {\it Comment. Math. Helv.} {\bf 91}:1 (2016) 163--202. 
\bibitem[SZh]{SeZh} N.\,Semenov, M.\,Zhykhovich, ``Integral motives, relative Krull--Schmidt principle, and Maranda-type theorems'', {\it Math. Ann.} {\bf363}:1 (2015) 61--75.
\bibitem[Ser]{Se65} J-P.\,Serre, ``Cohomologie Galoisienne'', {\it Lecture Notes in Mathematics} {\bf5}, Springer-Verlag (1965); English translation of the 5th edition: ``Galois Cohomology'', Springer-Verlag (1997).
\bibitem[Shi]{Sh} E.\,Shinder, ``The Mishchenko and Quillen formulas for oriented cohomology pretheories'', {\it Algebra i Analiz} {\bf18}:4 (2006) 215--224 (Russian); translation in {\it St. Petersburg Math. J.} {\bf18}:4 (2007) 671--678.
\bibitem[Smi]{Sm} A.\,Smirnov, ``Riemann--Roch theorem for operations in cohomology of algebraic varieties'', {\it Algebra i Analiz} {\bf18}:5 (2006) 210--236 ({\it in Russian}); English translation: {\it St. Petersburg Math. J.} {\bf18:5} (2007) 837--856.
\bibitem[Swa]{Swan} R.\,Swan, ``K-Theory of Quadric Hypersurfaces'', {\it Annals of Mathematics}, Second Series {\bf122}:1 (1985) 113--153.
\bibitem[Vi98]{Vrost} A.\,Vishik, ``Integral Motives of Quadrics'', {\it MPIM-preprint} {\bf13} (1998) 1--82, {\tt www.mpim-bonn.mpg.de/node/263}.
\bibitem[Vi04]{Vlect} A.\,Vishik, ``Motives of quadrics with applications to the theory of quadratic forms'', in: ``Geometric Methods in the Algebraic Theory of Quadratic Forms'', in: {\it Lecture Notes in Math.} {\bf1835} Springer, Berlin (2004) 25--101.
\bibitem[Vi05]{VJinv} A.\,Vishik, ``On the Chow groups of quadratic Grassmannians'', {\it Doc. Math.} {\bf10} (2005) 111--130.
\bibitem[Vi07]{Vuinv} A.\,Vishik, ``Fields of $u$-invariant $2^{r}+1$, in: {\it Algebra, Arithmetic and Geometry, Manin Festschrift}, Birkh\"auser (2007).
\bibitem[Vi11]{Vexcel} A.\,Vishik, ``Excellent connections in the motives of quadrics'', {\it Ann. Sci. \'Ecole Norm. Sup\'er.} {\bf44}:1 (2011) 183--195.
\bibitem[Vi15]{Vcob} A.\,Vishik, ``Algebraic cobordism as a module over Lazard ring'', {\it  Mathematische Annalen} {\bf363}:3 (2015) 973--983.
\bibitem[Vi19]{V12} A.\,Vishik, ``Stable and unstable operations in algebraic cobordism'', {\it Annales Scientifiques de l'\'Ecole Normale Sup\'erieure} {\bf52}:3 (2019) 561--630.
\bibitem[ViYa]{VY} A.\,Vishik, N.\,Yagita, ``Algebraic cobordisms of a Pfister quadric'', {\it Journal of the London Mathematical Society} {\bf76}:3 (2007) 586--604.
\bibitem[ViZa]{VZrost} A.\,Vishik, K.\,Zainoulline, ``Motivic splitting lemma'', {\it Doc. Math.} {\bf13} (2008) 81--96.
\bibitem[Vo95]{VoevMor} V.\,Voevodsky, ``Bloch--Kato conjecture for $\mathbb Z/2$-coefficients and algebraic Morava K-theories'', {\it preprint} (1995); {\tt http://www.math.uiuc.edu/K-theory/0076/}
\bibitem[Vo03]{Voev} V.\,Voevodsky, ``Motivic cohomology with $\mathbb Z/2$-coefficients'', {\it Publications Math\'ematiques de l'Institut des Hautes \'Etudes Scientifiques} {\bf98} (2003) 59--104.
\bibitem[Vo11]{Vo11} {V.~Voevodsky}, {``On motivic cohomology with $\mathbb Z/l$-coefficients''},
{\it Ann. Math.} {\bf 174}:1 (2011), 401--438.
\bibitem[Ya05]{Ycob} N.\,Yagita, ``Algebraic cobordism of simply connected Lie groups'', {\it Math. Proc. Cambridge Philos. Soc.} {\bf139}:2 (2005) 243--260.
\bibitem[Ya08]{YSteen} N.\,Yagita, ``Chow rings of excellent quadrics'', {\it Journal of Pure and Applied Algebra} {\bf212}:11 (2008) 2440--2449.
\bibitem[Zol]{Zol} E.\,Zolotarev, ``Algebraic cobordism of special orthogonal group'', {\it Bachelor Thesis}, St. Petersburg State University (2020) 1--19 ({\it in Russian}).
\end{thebibliography}
\end{document}